\def\End{\mathop{\rm End}\nolimits}
\def\Aut{\mathop{\rm Aut}\nolimits}
\def\Id{\mathop{\rm Id}\nolimits}
\def\ad{\mathop{\rm ad}\nolimits}
\def\Hom{\mathop{\rm Hom}\nolimits}
\def\Cb{{\mathbb C}}
\def\Zb{{\mathbb Z}}
\def\Hc{{\cal H}}
\def\Lc{{\cal L}}
\def\Mc{{\cal M}}
\def\Lc{{\cal L}}
\def\a{\alpha}
\def\b{\beta}
\def\d{\delta}
\def\D{\Delta}
\def\Om{\Omega}
\def\s{\sigma}
\def\t{\theta}
\def\ve{\varepsilon}
\def\vp{\varphi}
\def\0b{\bf 0}
 \def\Zb{\mathbb{Z}}
\def\ot{\otimes}
\def\ra{\rightarrow}
\def\al{>\hspace{-4pt}\vartriangleleft}
\def\p{\partial}
\def\0D{\Delta^{(0)}}
\def\1D{\Delta^{(1)}}
\def\Db{\blacktriangledown}
\newcommand{\FD}{\mathfrak{D}}
\newcommand{\Fg}{\mathfrak{g}}
\newcommand{\Fl}{\mathfrak{l}}
\newcommand{\Fs}{\mathfrak{s}}
 \newcommand{\ie}{{\it i.e., }\ }
\newtheorem{theorem}{Theorem}[section]
\newtheorem{remark}[theorem]{Remark}
\newtheorem{proposition}[theorem]{Proposition}
\newtheorem{lemma}[theorem]{Lemma}
\newtheorem{corollary}[theorem]{Corollary}
\newtheorem{example}[theorem]{Example}
\newtheorem{definition}[theorem]{Definition}
\def\build#1_#2^#3{\mathrel{
\mathop{\kern 0pt#1}\limits_{#2}^{#3}}}
\newcommand{\ps}[1]{~\hspace{-4pt}_{^{(#1)}}}
\newcommand{\ns}[1]{~\hspace{-4pt}_{_{{\langle#1\rangle}}}}
\newcommand{\snsb}[1]{~\hspace{-4pt}_{_{{[\overline{#1}]}}}}
\newcommand{\nsb}[1]{~\hspace{-4pt}_{^{[#1]}}}
\def\odots{\ot\dots\ot}
\def\wdots{\wedge\dots\wedge}
\def\wg{\wedge}
\def\vp{\varphi}
\def\b{\beta}
\def\D{\Delta}
\def\d{\delta}
\def\d{\delta}
\def\dt{\left.\frac{d}{dt}\right|_{_{t=0}}}
\numberwithin{equation}{section}
\begin{document}

\title{\bf \sc Cyclic cohomology of Lie algebras}

\author{
\begin{tabular}{cc}
Bahram Rangipour \thanks{Department of Mathematics  and   Statistics,
     University of New Brunswick, Fredericton, NB, Canada}\quad and \quad  Serkan S\"utl\"u $~^\ast$
      \end{tabular}}

\date{ \ }

\maketitle
\abstract{ In this paper we aim to understand the category of stable-Yetter-Drinfeld modules over enveloping algebra of  Lie algebras. To do so, we need to define such modules over Lie algebras.  These two categories  are shown to be isomorphic.  A mixed complex is defined for   a given   Lie algebra and a  stable-Yetter-Drinfeld module over it. This complex is quasi-isomorphic to the Hopf cyclic complex of  the enveloping algebra of the Lie algebra with coefficients in  the corresponding module.    It is shown that  the (truncated) Weil algebra, the Weil algebra with generalized coefficients defined by   Alekseev-Meinrenken, and the perturbed  Koszul complex introduced  by  Kumar-Vergne are examples of  such a mixed complex.}

\section{Introduction}
One of the  well-known complexes in mathematics is the Chevalley-Eilenberg complex of a Lie algebra $\Fg$ with coefficients in a $\Fg$-module $V$ \cite{ChevEile}.
\begin{equation}
\xymatrix{
C^\bullet(\Fg,V):&&  V\ar[rr]^{d_{\rm CE}}&& V\ot\Fg^\ast\ar[rr]^{d_{\rm CE}}&& V\ot\wedge^2\Fg^\ast\ar[rr]^{d_{\rm CE}}&& \cdots
  }
\end{equation}

Through  examples,  we can see  that when the coefficients space  $V$ is equipped with more  structures,   then  the complex  $(C^\bullet(\Fg,V),d_{\rm CE})$, together with another operator  $d_{\rm K}: C^\bullet(\Fg,V)\ra C^{\bullet-1}(\Fg,V)$,  called Koszul boundary, turns into a mixed complex. That is  $d_{\rm CE}+d_{\rm K}$ defines  a coboundary  on the  total complex $$W^\bullet=\bigoplus_{\bullet\ge p\ge 0} C^{2p-\bullet}(\Fg,V).$$  Among examples,  one observes that,
\begin{itemize}
\item the well-known (truncated) Weil complex is achieved  by $V:=S(\Fg^\ast)_{[2q]}$ the (truncated) polynomial algebra of $\Fg$,
\item the Weil algebra with generalized coefficients  defined by Alekseev-Meinrenken in \cite{AlekMein} is obtained by  $V:=\mathcal{E}'(\Fg^*)$,  the convolution algebra of compactly supported distributions on $\Fg^*$,
\item finally it was shown by  Kumar-Vergne that  if $V$ is a module over the Weyl algebra $D(\Fg)$ then $ (W^\bullet, d_{\rm CE}+d_{\rm K})$ is a complex which is called perturbed Koszul complex \cite{KumaVerg}.

\end{itemize}

In this paper we  prove that  $(W^\bullet, d_{\rm CE}+d_{\rm K})$ is a complex if and only if $V$ is a unimodular stable module over the Lie algebra $\widetilde \Fg$, where  $\widetilde\Fg:=\Fg^\ast\al\Fg$ is  the semidirect product  Lie algebra  $\Fg^\ast$ and $\Fg$. Here   $\Fg^\ast:=\Hom(\Fg, \Cb)$  is thought of as an abelian Lie algebra acted upon by the Lie algebra $\Fg$ via the coadjoint representation.

Next, we show that any  Yetter-Drinfeld module over the enveloping Hopf algebra $U(\Fg)$ yields a module over $\widetilde\Fg$ and conversely any locally conilpotent module over $\widetilde\Fg$  amounts to a Yetter-Drinfeld module over the Hopf algebra $U(\Fg)$. This correspondence is accompanied with  a quasi-isomorphism  which reduces to  the antisymmetrization map if the module $V$ is merely a $\Fg$-module. The isomorphism generalizes the computation  of the Hopf cyclic cohomology of $U(\Fg)$ in terms of  the Lie algebra homology of $\Fg$ carried out by  Connes-Moscovici in \cite{ConnMosc98}.

Throughout  the paper,  $\Fg$ denotes a finite dimensional Lie algebra over $\Cb$, the field of complex numbers. 
We denote by $X_1, \ldots,  X_N$ and $\t^1, \ldots, \t^N$ a dual basis for $\Fg$ and $\Fg^\ast$ respectively. 
All tensor products are over $\Cb$. 
\bigskip

B. R.  would like to thank Alexander Gorokhovsky for the useful discussions  on the $G$-differential algebras,  and  is also grateful to the organizers of NCGOA 2011 at Vanderbilt University, where these discussions took place. 

\tableofcontents

\section{The model complex for $G$-differential algebras}
In  this section we first recall $G$-differential algebras and their basic properties. Then we introduce our model complex which is the main motivation of this paper. The model complex includes as examples Weil algebra and their truncations, perturbed Koszul complex introduced by Kumar- Vergne in \cite{KumaVerg},
and Weil algebra with generalized coefficients introduced by Alekseev-Meinrenken \cite{AlekMein}.

\subsection{$G$-differential algebras}

  Let $\widehat{g} = \Fg_{-1}\oplus \Fg_0\oplus \Fg_1$ be a   graded Lie algebra, where $\Fg_{-1}$ and $\Fg_0$ are  $N$-dimensional vector spaces with bases  $\iota_1, \cdots , \iota_N$, and $\Lc_1, \cdots , \Lc_N$ respectively, and  $\Fg_1$ is generated by  $d$.

    We let $C^i_{jk}$ denote the  structure constants of the Lie algebra $\Fg_0$ and assume that  the  graded-bracket on $\widehat{g}$ is defined as follows.
\begin{align}
& [\iota_p,\iota_q] = 0, \\
& [\Lc_p,\iota_q] = C^r_{pq}\iota_r, \\
& [\Lc_p,\Lc_q] = C^r_{pq}\Lc_r ,\\
& [d,\iota_k]= \Lc_k ,\\
& [d,\Lc_k] = 0, \\
& [d,d] = 0.
\end{align}

Now let $G$ be a (connected) Lie group with Lie algebra $\Fg$. We assume $\widehat \Fg$ be as above with $ \Fg_0\cong\Fg$ as Lie algebras.

A graded algebra $A$ is called a $G$-differential algebra if there exists a representation $\rho:G \to \Aut(A)$ of the group $G$ and a graded  Lie algebra homomorphism $\hat{\rho}:\widehat{g} \to \End(A)$ compatible in the following way:
\begin{align}
& \dt\rho(exp(tX)) = \hat{\rho}(X) \\[.2cm]
& \rho(a)\hat{\rho}(X)\rho(a^{-1}) = \hat{\rho}(Ad_aX) \\[.2cm]
& \rho(a)\iota_X\rho(a^{-1}) = \iota_{Ad_aX} \\[.2cm]
& \rho(a)d\rho(a^{-1}) = d
\end{align}
for any $a \in G$ and any $X \in \Fg$. For further discussion on $G$-differential algebras we refer the reader to \cite[chpter 2]{GuilSter} and \cite{AlekMein}.

 The exterior algebra $\bigwedge\Fg^\ast$ and the Weil algebra  are examples of $G$-differential algebras.

 Here we recall $W(\Fg)$,  the Weil algebra  of a finite dimensional  Lie algebra $\Fg$,  by
$$W(\Fg) = \bigwedge\Fg^* \ot S(\Fg^*), $$ with the grading
\begin{align}
W(\Fg) = \bigoplus_{l \geq 0} W^l(\Fg),
\end{align}
where
\begin{align}
W^l(\Fg) = \bigoplus_{p + 2q = l} W^{p,q}, \qquad W^{p,q} := \wedge^p\Fg^* \ot S^q(\Fg^*).
\end{align}
It is equipped with two degree $+1$ differentials as follows. The first one is
\begin{align}
\begin{split}
& d_{\rm K}:\wedge^p\Fg^* \ot S^q(\Fg^*) \to \wedge^{p-1}\Fg^* \ot S^{q+1}(\Fg^*) \\
& \vp \ot R \mapsto \sum_j \iota_{X_j}(\vp) \ot R\theta^j
\end{split}
\end{align}
and it is called the Koszul coboundary. The second one is the Chevalley-Eilenberg coboundary (Lie algebra cohomology coboundary)
\begin{equation}
d_{\rm CE}:\wedge^p\Fg^* \ot S^q(\Fg^*) \to \wedge^{p+1}\Fg^* \ot S^q(\Fg^*) \\
\end{equation}

Then $d_{\rm CE} + d_{\rm K}: W^l(\Fg) \to W^{l+1}(\Fg)$ equips $W(\Fg)$ with a differential graded algebra structure.  It is known that via coadjoint representation $W(\Fg)$ is a $G$-differential algebra.

\medskip

A $G$-differential algebra is called locally free if there exists an element $$\Theta = \sum _iX_i \ot \theta^i \in (\Fg \ot A^{odd})^G$$ called the algebraic connection form.

We  assume that $\Theta \in (\Fg \ot A^1)^G$, and we have  $$\iota_k(\Theta) = X_k, \quad \text{and }\quad \Lc_k(\theta^i) = -C^i_{kl}\theta^l.$$

\subsection{The model complex}
Let $(A,\Theta)$ be a locally free $G$-differential algebra with $\dim(G)=N$. We assume that  $V$ is a vector space with elements $L_k$ and $L^k$ in  $\End(V)$, $1 \leq k \leq N$.

We consider the graded space $A \ot V$ with the grading induced from that of $A$. Using all information of  the $G$-differential algebra structure of $A$ and the connection form $\Theta \in (\Fg \ot A^1)^G$, we introduce the following map as a sum of a degree $+1$ map and a degree $-1$ map.

\begin{equation}\label{model-complex}
D(x \ot v)  := d(x) \ot v + \theta^kx \ot L_k(v) + \iota_k(x) \ot L^k(v)
\end{equation}

\begin{proposition}\label{Koszul-diff}
Let $(A, \Theta)$ be a locally free $G$-differential algebra. Then the map
\begin{equation}
d_{\rm K}(x \ot v) = \iota_k(x) \ot L^k(v)
\end{equation}
is  a  differential, that is $d_{\rm K}^2=0$,  if and only if $V$ is  a $\Cb^N$-module via $L^k$s, \ie
$$[L^p,L^q] = 0, \qquad 1 \leq p,q, \leq N.$$
\end{proposition}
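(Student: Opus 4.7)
The plan is to compute $d_{\rm K}^2$ directly and exploit the graded anticommutativity of the contractions $\iota_k$ that comes from the relation $[\iota_p,\iota_q]=0$ in the graded Lie algebra $\widehat{\Fg}$. Since $\iota_p$ and $\iota_q$ both have odd degree, this relation translates into $\iota_l\iota_k=-\iota_k\iota_l$ as operators on $A$. Applying $d_{\rm K}$ twice and reorganizing the double sum in $k,l$ by pairing the term $\iota_l\iota_k\otimes L^l L^k$ with $\iota_k\iota_l\otimes L^k L^l$, the anticommutativity yields the identity
\begin{equation*}
d_{\rm K}^2(x\otimes v)\;=\;\sum_{k<l}\iota_l\iota_k(x)\otimes [L^l,L^k](v).
\end{equation*}

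From this identity the implication $(\Leftarrow)$ is immediate: if $[L^p,L^q]=0$ for all $p,q$, then every summand vanishes and $d_{\rm K}^2=0$. For the converse $(\Rightarrow)$ the delicate point is that $d_{\rm K}^2=0$ might a priori force only some combination of commutators to act trivially on $V$, so one needs test elements in $A$ that isolate each index pair $(k,l)$ separately. This is exactly what the algebraic connection form $\Theta=\sum_i X_i\otimes\theta^i\in(\Fg\otimes A^1)^G$ provides: the defining relation $\iota_k(\theta^i)=\delta_k^i$, together with the graded Leibniz rule for the odd derivation $\iota_k$, gives
\begin{equation*}
\iota_l\iota_k(\theta^i\theta^j)\;=\;\delta_k^i\delta_l^j-\delta_l^i\delta_k^j.
\end{equation*}
Substituting $x=\theta^i\theta^j$ with $i<j$ into the formula for $d_{\rm K}^2$ leaves only the $(k,l)=(i,j)$ contribution, producing $1_A\otimes [L^j,L^i](v)\in A\otimes V$. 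Since $1_A\neq 0$, the hypothesis $d_{\rm K}^2=0$ then forces $[L^i,L^j](v)=0$ for every $v\in V$ and every pair $i,j$, which is the asserted $\Cb^N$-module structure.

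I expect no serious conceptual obstacle beyond bookkeeping of signs from the graded Leibniz rule. The only mildly subtle point is the converse direction, where one must check that local freeness (through the existence of $\Theta$) supplies enough degree-two test elements in $A$ to separate the individual commutators $[L^i,L^j]$, rather than only a symmetrized combination of them.
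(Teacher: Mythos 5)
Your proof is correct and follows essentially the same route as the paper: the forward direction uses the anticommutativity of the odd contractions $\iota_k$ against the commutativity of the $L^k$, and the converse tests $d_{\rm K}^2$ on the degree-two elements $\theta^i\theta^j$ supplied by the connection via $\iota_k(\theta^j)=\delta^j_k$, isolating $1\ot[L^j,L^i](v)$. Your rewriting of the double sum as $\sum_{k<l}\iota_l\iota_k\ot[L^l,L^k]$ is a minor organizational refinement of the same computation.
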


\begin{proof}
Assume that $[L^j,L^i] = 0$. Then
\begin{equation}
d_{\rm K} \circ d_{\rm K}(x \ot v) = \iota_l\iota_k(x) \ot L^lL^k(v) = 0
\end{equation}

by the commutativity of $L_k$s and the anti-commutativity of $\iota_k$s.

Conversely, if $d_{\rm K}$ has the property $d_{\rm K} \circ d_{\rm K} = 0$, then by using $\iota_k(\t^j)=\d^j_k$ we have

\begin{equation}
d_{\rm K} \circ d_{\rm K}(\theta^i\theta^j \ot v) = d_{\rm K}(\theta^j \ot L^i(v) - \theta^i \ot L^j(v)) = 1 \ot [L^j,L^i](v) = 0
\end{equation}

which implies $[L^j,L^i] = 0$.
\end{proof}

\begin{definition}
\cite{GuilSter}. For a commutative locally free $G$-differential algebra $A$, the element $\Om=\sum_i \Om^i\ot X_i \in (A^2\ot \Fg)^G$,  satisfying

\begin{equation}
d(\theta^i) = -\frac{1}{2}C^i_{pq}\theta^p\theta^q + \Om^i,
\end{equation}

is  called the curvature of the connection $\Theta=\sum_i\t^i\ot X_i$.
\end{definition}

We call a commutative locally free $G$-differential algebra $(A,\Theta)$  flat if $\Om=0$, or  equivalently
\begin{equation}
d(\theta^k) = -\frac{1}{2}C^k_{pq}\theta^p\theta^q.
\end{equation}

\begin{proposition}\label{CE-diff}
Let $(A, \Theta)$ be a commutative locally free flat $G$-differential algebra. Then the map
\begin{equation}
d_{\rm CE}(x \ot v) = d(x) \ot v + \theta^kx \ot L_k(v)
\end{equation}
is a differential, that is $d_{\rm CE}^2=0$, if and only if $V$ is a $\Fg$-module via $L_k$, that is $[L_t,L_l] = C^k_{tl}L_k$.
\end{proposition}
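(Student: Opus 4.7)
The plan is to compute $d_{\rm CE}^2(x\ot v)$ directly, collect terms using $d^2=0$, the graded Leibniz rule for $d$, flatness, and graded commutativity, and show that everything collapses to a single expression proportional to the obstruction $[L_p,L_q]-C^k_{pq}L_k$. Applying $d_{\rm CE}$ twice gives
\begin{equation}
d_{\rm CE}^2(x \ot v) = d^2(x)\ot v + \theta^j d(x)\ot L_j(v) + d(\theta^k x)\ot L_k(v) + \theta^j\theta^k x\ot L_jL_k(v).
\end{equation}
The first term vanishes since $d^2=0$ on $A$, while the graded Leibniz rule (with $|\theta^k|=1$) gives $d(\theta^k x) = d(\theta^k)x - \theta^k d(x)$, so the two terms containing $d(x)$ cancel after relabeling. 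This leaves
\begin{equation}
d_{\rm CE}^2(x \ot v) = d(\theta^k)x\ot L_k(v) + \theta^j\theta^k x\ot L_jL_k(v).
\end{equation}

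Next I would substitute the flatness identity $d(\theta^k)=-\tfrac{1}{2}C^k_{pq}\theta^p\theta^q$, and use graded commutativity $\theta^j\theta^k=-\theta^k\theta^j$ to antisymmetrize $L_jL_k \mapsto \tfrac{1}{2}[L_j,L_k]$. The expression then collapses to
\begin{equation}
d_{\rm CE}^2(x\ot v) = \tfrac{1}{2}\,\theta^p\theta^q x\ot \bigl([L_p,L_q] - C^k_{pq}L_k\bigr)(v).
\end{equation}

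From this identity the \emph{if} direction is immediate. For the converse, I would specialize $x=1$ to obtain $d_{\rm CE}^2(1\ot v) = \tfrac{1}{2}\theta^p\theta^q\ot([L_p,L_q]-C^k_{pq}L_k)(v)$, then act on the $A$-factor by the contractions $\iota_s\iota_r$; using $\iota_k(\theta^j)=\delta^j_k$ and the graded derivation property of $\iota_k$, each coefficient is extracted individually and one concludes $[L_r,L_s]=C^k_{rs}L_k$, in the same spirit as the converse argument of Proposition~\ref{Koszul-diff}. The only delicate point is keeping the graded signs straight in the Leibniz and antisymmetrization steps; no further idea is required.
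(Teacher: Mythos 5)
Your computation is correct and follows essentially the same route as the paper: expand $d_{\rm CE}^2$, cancel the $d(x)$ terms by the graded Leibniz rule, and use flatness plus graded commutativity to reduce the square to $\tfrac12\,\theta^p\theta^q x\ot\bigl([L_p,L_q]-C^k_{pq}L_k\bigr)(v)$. In fact you are somewhat more complete than the paper's own proof, which only displays the computation for $x=1$ and leaves both the general-$x$ case and the coefficient-extraction step (your $\iota_s\iota_r$ argument, matching the converse of Proposition~\ref{Koszul-diff}) implicit.
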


\begin{proof}
 Using the commutativity of $A$ we see that
\begin{align}
\begin{split}
& d_{\rm CE} \circ d_{\rm CE}(1 \ot v) = \sum _k d_{\rm CE}(\theta^k \ot L_k(v)) = \sum_k d(\theta^k) \ot L_k(v) +
\sum_{k,t}\theta^t\theta^k \ot L_tL_k(v) \\
& = -\sum_{l,t}\frac{1}{2}C^k_{tl}\theta^t\theta^l \ot L_k(v) + \sum_{t,l} \frac{1}{2}\theta^t\theta^l \ot [L_t,L_l](v),
\end{split}
\end{align}
which proves the claim.
\end{proof}

\begin{proposition}\label{14}
Let $A$ be a commutative locally free flat $G$-differential algebra and $V$ be a $\Fg$-module via $L_k$s and a $\Cb^N$-module via $L^k$s. Then, $(A \ot V, D)$ is a complex with differential

\begin{equation}
D(x \ot v)  := d(x) \ot v + \theta^kx \ot L_k(v) + \iota_k(x) \ot L^k(v),
\end{equation}
if and only if
\begin{equation}
\mathcal{L}_k(x) \ot L^k(v) + \theta^k\iota_t(x) \ot [L_k,L^t](v) + x \ot L^kL_k(v) = 0.
\end{equation}
\end{proposition}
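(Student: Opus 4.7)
The plan is to exploit the decomposition $D = d_{\rm CE} + d_{\rm K}$ and reduce $D^2 = 0$ to the cross term $d_{\rm CE}d_{\rm K} + d_{\rm K}d_{\rm CE}$. By Proposition \ref{CE-diff} applied under the present hypothesis that $V$ is a $\Fg$-module via the $L_k$, we already know $d_{\rm CE}^2 = 0$, and by Proposition \ref{Koszul-diff} we have $d_{\rm K}^2 = 0$. Hence $D^2$ collapses to the anticommutator $d_{\rm CE}d_{\rm K} + d_{\rm K}d_{\rm CE}$, and the statement is simply the identification of this anticommutator with the displayed three-term expression.

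First I would compute $d_{\rm CE}(d_{\rm K}(x\ot v))$ directly from the definitions, obtaining
$d(\iota_k(x))\ot L^k(v) + \theta^j\iota_k(x)\ot L_jL^k(v)$.
Next I would compute $d_{\rm K}(d_{\rm CE}(x\ot v))$, where the crucial input is that $\iota_j$ is a graded derivation of degree $-1$, so $\iota_j(\theta^k x) = \delta^k_j\,x - \theta^k\iota_j(x)$ (using $\iota_j(\theta^k)=\delta^k_j$). This yields
$\iota_k(d(x))\ot L^k(v) + x\ot L^kL_k(v) - \theta^k\iota_j(x)\ot L^jL_k(v)$.

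Summing the two and invoking the Cartan magic formula $[d,\iota_k] = \Lc_k$, the first terms of each line combine as $d(\iota_k(x))\ot L^k(v) + \iota_k(d(x))\ot L^k(v) = \Lc_k(x)\ot L^k(v)$. Relabeling dummy indices in the middle terms gives $\theta^k\iota_t(x)\ot(L_kL^t - L^tL_k)(v) = \theta^k\iota_t(x)\ot [L_k,L^t](v)$, and the remaining $x\ot L^kL_k(v)$ term is already in the desired form. This produces exactly the expression of the proposition, so $D^2(x\ot v)$ equals the left-hand side of the stated identity. Vanishing of this for all $x$ and $v$ is then tautologically equivalent to $D^2=0$.

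The main bookkeeping obstacle is keeping track of the sign picked up by $\iota_j$ passing through $\theta^k$ and the relabeling of summation indices so that the middle contributions assemble into the commutator $[L_k,L^t]$; once the Cartan formula $[d,\iota_k]=\Lc_k$ is applied correctly to pair $d\iota_k$ with $\iota_kd$, no further input beyond the $G$-differential algebra axioms is needed. Commutativity and flatness of $A$ are implicit through the use of Propositions \ref{Koszul-diff} and \ref{CE-diff}, but are not required a second time for the cross-term computation itself.
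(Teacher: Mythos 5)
Your proposal is correct and follows essentially the same route as the paper: reduce $D^2=0$ to the anticommutator $d_{\rm CE}d_{\rm K}+d_{\rm K}d_{\rm CE}$ via Propositions \ref{Koszul-diff} and \ref{CE-diff}, expand the cross term using the derivation property $\iota_t(\theta^k x)=\delta^k_t x-\theta^k\iota_t(x)$, and assemble the result with the relation $[d,\iota_k]=\Lc_k$ and an index relabeling into the stated three-term expression. No substantive difference from the paper's argument.
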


\begin{proof}
By Proposition \ref{Koszul-diff} and Proposition \ref{CE-diff},  $d_{\rm K}$ and $d_{\rm CE}$ are differentials respectively. Then  $A \ot V$ is a complex with differential $D = d_{\rm CE} + d_{\rm K}$ if and only if $d_{\rm CE} \circ d_{\rm K} + d_{\rm K} \circ d_{\rm CE} = 0$.

We observe

\begin{align*}
& d_{\rm K}(d_{\rm CE}(x \ot v)) = d_{\rm K}(d(x) \ot v + \theta^kx \ot L_k(v)) \\
& = \iota_kd(x) \ot L^k(v) + \iota_t(\theta^kx) \ot L^tL_k(v) \\
& = \iota_kd(x) \ot L^k(v) + x \ot L^kL_k(v) - \theta^k\iota_t(x) \ot L^tL_k(v),
\end{align*}
and
\begin{align}
\begin{split}
& d_{\rm CE}(d_{\rm K}(x \ot v)) = d_{\rm CE}(\iota_t(x) \ot L^t(v)) \\
& = d\iota_t(x) \ot L^t(v) + \theta^k\iota_t(x) \ot L_kL^t(v).
\end{split}
\end{align}
Therefore,
\begin{equation}
d_{\rm CE} \circ d_{\rm K} + d_{\rm K} + d_{\rm CE}(x \ot v) = \mathcal{L}_k(x) \ot L^k(v) + \theta^k\iota_t(x) \ot [L_k,L^t](v) + x \ot L^kL_k(v)
\end{equation}
\end{proof}

The next proposition determines  the conditions on $V$ that is necessary and sufficient for  $(A \ot V,d_{\rm CE}+d_{\rm K})$  to be  a complex.

Considering the dual $\Fg^*$ of the Lie algebra $\Fg$ as a commutative Lie algebra, we can define the Lie  bracket  on $\widetilde\Fg\; :=\; \Fg^\ast \al \Fg$ by
\begin{equation}\label{tdg}
\big[\a \oplus X\;,\; \b \oplus Y\big] \;:= \;\big(\Lc_X(\b) - \Lc_Y(\a)\big) \oplus \big[X\;,\;Y\big].
\end{equation}

\begin{proposition}\label{22}
Let $A$ be a commutative locally free flat $G$-differential algebra and $V$ a $\Fg$-module via $L_k$s and a $\Cb^N$-module via $L^k$s.  Then, $(A \ot V, D)$ is a  complex if and only if
\begin{equation}\label{15}
\text{unimodular stability}\qquad\qquad \sum_k L^k L_k = 0,
\end{equation}
and
\begin{equation}\label{16}
 \text{$\widetilde\Fg$-module property}\qquad\qquad  [L^i,L_j] = \sum_k C^i_{jk}L^k.
\end{equation}
\end{proposition}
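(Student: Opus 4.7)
The strategy is to leverage Proposition \ref{14}: under the standing assumptions that $V$ is a $\Fg$-module via the $L_k$'s and a $\Cb^N$-module via the $L^k$'s, $(A\ot V,D)$ is a complex if and only if
\begin{equation}\label{KEY}
\mathcal{L}_k(x) \ot L^k(v) + \theta^k\iota_t(x) \ot [L_k,L^t](v) + x \ot L^k L_k(v) = 0
\end{equation}
for all $x\in A$ and $v\in V$. So the entire proof becomes: show \eqref{KEY} $\Longleftrightarrow$ \eqref{15}+\eqref{16}.

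\textbf{Forward direction.} I would extract the two conditions by probing \eqref{KEY} with distinguished elements of $A$. Setting $x=1$ kills the first two summands ($\mathcal{L}_k(1)=0=\iota_t(1)$) and leaves $1\ot\sum_k L^k L_k(v)=0$, which must hold for every $v$; this is exactly \eqref{15}. Next setting $x=\theta^i$ and using $\mathcal{L}_k(\theta^i)=-C^i_{kl}\theta^l$, $\iota_t(\theta^i)=\delta^i_t$, together with \eqref{15} to annihilate the third summand, \eqref{KEY} collapses to
\[
\sum_k \theta^k \ot \bigl([L_k,L^i] + C^i_{kl}L^l\bigr)(v) = 0.
\]
Since the $\theta^1,\dots,\theta^N$ are $\Cb$-linearly independent in $A^1$ (local freeness), one reads off $[L_k,L^i]=-C^i_{kl}L^l$, i.e. $[L^i,L_j]=\sum_k C^i_{jk}L^k$, which is \eqref{16}.

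\textbf{Reverse direction.} Assuming \eqref{15} and \eqref{16}, \eqref{KEY} simplifies: the third summand vanishes by \eqref{15}, and $[L_k,L^t]=-\sum_l C^t_{kl}L^l$ by \eqref{16}, so what remains to prove is
\[
\sum_k \Bigl(\mathcal{L}_k(x) - \sum_{l,t} C^t_{lk}\,\theta^l\,\iota_t(x)\Bigr) \ot L^k(v) = 0 \qquad \forall x\in A,\ v\in V.
\]
My plan is to prove the stronger pointwise identity
\begin{equation}\label{IDNTY}
\mathcal{L}_k(x) = \sum_{l,t} C^t_{lk}\,\theta^l\,\iota_t(x) \qquad (x\in A)
\end{equation}
inside $A$. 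Both sides are degree-$0$ linear maps $A\to A$. Using commutativity of $A$ and the graded Leibniz rule for $\iota_t$, the right-hand side is a degree-$0$ derivation (the sign coming from $\iota_t$ moving past $\theta^l$ is absorbed by commuting $x$ past $\theta^l$); $\mathcal{L}_k$ is a derivation by axiom. They agree on $1$ (trivially) and on each $\theta^i$ (since $\sum_{l,t}C^t_{lk}\theta^l\iota_t(\theta^i)=C^i_{lk}\theta^l=-C^i_{kl}\theta^l=\mathcal{L}_k(\theta^i)$). It therefore suffices to show that both derivations commute with $d$, so that they agree on the differential subalgebra generated by $\{1,\theta^i\}$, which exhausts $A$ under local freeness. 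For $\mathcal{L}_k$ this is the axiom $[d,\mathcal{L}_k]=0$; for the right-hand side it is a direct computation that uses flatness $d\theta^l=-\tfrac12 C^l_{pq}\theta^p\theta^q$ and the Cartan formula $\mathcal{L}_t=[d,\iota_t]$, with the resulting quadratic-in-$C$ terms collapsing by the Jacobi identity for $\Fg$.

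\textbf{Anticipated obstacle.} The forward direction and the simplification in the reverse direction are bookkeeping. The real content is establishing \eqref{IDNTY} globally on $A$: both the derivation property of the right-hand side (which genuinely uses commutativity of $A$) and the compatibility with $d$ (which genuinely uses flatness plus Jacobi) are essential, and neither would hold without those hypotheses on $A$. This is also the point where the Lie algebra $\widetilde\Fg=\Fg^\ast\al\Fg$ of \eqref{tdg} makes an implicit appearance: \eqref{IDNTY} together with \eqref{15} and \eqref{16} precisely encode that $(L_k,L^k)$ assemble into a unimodular stable $\widetilde\Fg$-representation, and it is this $\widetilde\Fg$-structure that forces $D^2=0$.
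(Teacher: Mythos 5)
Your forward direction is exactly the paper's: invoke Proposition \ref{14} and test the resulting identity with $x=1$ and $x=\theta^i$, using $\iota_t(\theta^i)=\delta^i_t$ and the independence of the $\theta^i$ guaranteed by the connection; that part is correct. For the converse, the paper is terser than you are: it substitutes \eqref{15} and \eqref{16} into the expression of Proposition \ref{14} and simply asserts that $\mathcal{L}_k(x)\ot L^k(v)+C^s_{kl}\theta^l\iota_s(x)\ot L^k(v)=0$, i.e.\ it implicitly uses precisely the operator identity $\mathcal{L}_k=-C^s_{kl}\theta^l\iota_s$ on $A$ that you single out as the real content. So your route coincides with the paper's up to the point where you attempt to actually prove that identity.

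That attempt is where your proposal has a genuine gap. The step ``they agree on the differential subalgebra generated by $\{1,\theta^i\}$, which exhausts $A$ under local freeness'' is unjustified: local freeness only asserts the \emph{existence} of the connection element $\Theta\in(\Fg\ot A^1)^G$, not that its components generate $A$. Moreover, by flatness $d\theta^i=-\tfrac{1}{2}C^i_{pq}\theta^p\theta^q$ already lies in the subalgebra generated by the $\theta^i$, so closing up under $d$ enlarges nothing, and the commutation of $\sum_{l,t}C^t_{lk}\theta^l\iota_t$ with $d$ on all of $A$ is itself not derivable from the hypotheses (your Jacobi cancellation needs the identity being proved). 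In fact the pointwise identity fails in the stated generality: take $A=\wedge\Fg^*\ot\Omega(M)$ for a nontrivial $G$-manifold $M$, with connection $\theta^i\ot 1$ from the first factor; this is commutative, locally free and flat, yet on a degree-zero element $1\ot f$ one has $\iota_t(1\ot f)=0$ while $\mathcal{L}_k(1\ot f)=1\ot \mathcal{L}_{X_k}f\neq 0$. Hence your derivation argument establishes the identity only on the copy of $\wedge\Fg^*$ generated by the $\theta^i$ (which is the setting of the paper's later applications, e.g.\ $A=\wedge\Fg^*$), and the converse implication needs either such an additional assumption on $A$ or a different reason for the vanishing of $\sum_k\big(\mathcal{L}_k(x)+C^s_{kl}\theta^l\iota_s(x)\big)\ot L^k(v)$ for the given $V$; this is exactly the point that the paper's own one-line converse leaves implicit.
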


\begin{proof}

Assume first that $(A \ot V, d_{\rm CE} + d_{\rm K})$ is a differential complex. Then by Proposition \ref{14}, taking $x = 1$ we get

\begin{equation*}
\sum_k L^k L_k = 0.
\end{equation*}

On the other hand,  by taking $x = \theta^i$ and using the fact that  $A$ is locally free, we get
\begin{equation*}
[L^i,L_j] = C^i_{jk}L^k.
\end{equation*}
Conversely, if \eqref{15} and \eqref{16} hold, then
\begin{equation*}
\big(d_{\rm CE} \circ d_{\rm K} + d_{\rm K} \circ d_{\rm CE}\big)(x \ot v) = \mathcal{L}_k(x) \ot L^k(v) + C^s_{kl}\theta^l\iota_s(x) \ot L^k(v) = 0.
\end{equation*}
\end{proof}

\section{Lie algebra cohomology and Perturbed Koszul complex}
In this section we specialize the model complex  $(A\ot V, D)$ defined in \eqref{model-complex} for $A=\bigwedge \Fg^\ast$. We show that the perturbed Koszul complex defined in \cite{KumaVerg} is an example of the model complex. As another example of the model complex,  we cover the Weil algebra with generalized coefficients introduced in \cite{AlekMein}.

\subsection{Lie algebra  cohomology}
Let $\Fg $ be a finite dimensional Lie algebra and $V$  be a right $\Fg$-module. Let also  $\{\t^i\}$ and $\{X_i\}$ be dual bases for $\Fg^\ast$ and $\Fg$. The Chevalley-Eilenberg complex $C(\Fg,M)$ is defined by
\begin{equation}
\xymatrix{V\ar[r]^{d_{\rm CE}\;\;\;\;\;\;\;\;}&C^1(\Fg,V)\ar[r]^{d_{\rm CE}}& C^2(\Fg,V)\ar[r]^{\;\;\;\;\;\;d_{\rm CE}}&\cdots\;,}
\end{equation}
where $C^q(\Fg,V)=\Hom(\wedge^q \Fg ,V)$ is the vector space of all alternating linear maps on $\Fg^{\ot q}$ with values in $V$. If $\a \in C^q(\Fg,V)$, then
 \begin{align}
 \begin{split}
& d_{\rm CE}(\a)(X_0, \ldots,X_q)=\sum_{i<j} (-1)^{i+j}\a([X_i,X_j], X_0\ldots \widehat{X}_i, \ldots, \widehat{X}_j, \ldots, X_q)+\\
& ~~~~~~~~~~~~~~~~~~~~~~~\sum_{i}(-1)^{i+1}\a(X_0,\ldots,\widehat{X}_i,\ldots X_q)X_i.
\end{split}
 \end{align}
Alternatively, we may identify  $C^q(\Fg,V)$ with $\wg^q\Fg^\ast \ot V$ and the coboundary $d_{\rm CE}$ with the following one
\begin{align}\label{d-CE}
\begin{split}
& d_{\rm CE}(v)= - \t^i \ot v \cdot X_i,\\
& d_{\rm CE}(\b \ot v)= d_{\rm dR}(\b)\ot v  - \t^i \wg \b \ot v \cdot X_i.
\end{split}
\end{align}
where $d_{\rm dR}:\wedge^p\Fg^\ast\ra \wedge^{p+1}\Fg^\ast$ is the de Rham  derivation defined by $d_{\rm dR}(\t^i)=\frac{-1}{2}C^i_{jk}\t^j\t^k$. We denote the cohomology of $(C^\bullet(\Fg,V),d_{\rm CE})$ by $H^\bullet(\Fg,V)$ and refer to it as the Lie algebra cohomology of $\Fg$ with coefficients in $V$.

\subsection{Perturbed Koszul complex}
With the same assumptions for $\Fg$ and $V$ as in the previous subsection, we specialize the model complex  $A\ot V$ defined in \eqref{model-complex} for $A=\bigwedge \Fg^\ast$. Indeed we have  $W^n(\Fg,V) := \wedge^n \Fg^* \ot V$, for $n \geq 0$, with differentials $d_{\rm CE}: W^n(\Fg,V) \to W^{n+1}(\Fg,V)$ defined in \eqref{d-CE}  and

\begin{align}
\begin{split}
& d_{\rm K}:W^n(\Fg,V) \to W^{n-1}(\Fg,V) \\
& \alpha \ot v \mapsto \sum_i \iota_{X_i}(\alpha) \ot v \lhd \theta^i.
\end{split}
\end{align}

Considering
\begin{equation}
L_kv = X_k \cdot v, \quad L^kv = v \lhd \theta^k,
\end{equation}

the condition $\sum_k L^kL_k = 0$ transfers directly into

\begin{equation}\label{21}
\sum_k (v \cdot X_k) \lhd \theta^k = 0.
\end{equation}

Similarly, the condition $[L^i,L_j] = \sum_k C^i_{jk}L^k$ becomes
\begin{equation}\label{3}
(v \cdot X_j) \lhd \theta^t = v \lhd (X_j \rhd \theta^t) + (v \lhd \theta^t) \cdot X_j.
\end{equation}

\begin{example}[Weil algebra]\label{Weil-example}
{\rm
 Let $\Fg$ be a (finite dimensional) Lie algebra and set $V = S(\Fg^*)$ - the polynomial algebra on  $\Fg$. Then $V$ is a right $\Fg$-module via the (co)adjoint action of $\Fg$. In other words,
\begin{equation}
 L_k:= \mathcal{L}_{X_k}.
\end{equation}
The role of $L^k$ is played by the multiplication of $\t^k$. That is
\begin{equation} L^k(\a)=\a\t^k.
\end{equation}
In this case, the equations \eqref{21} and \eqref{3} are satisfied and we obtain the Weil complex.
}\end{example}

\begin{example}[Truncated Weil algebra]
{\rm
 Let $V=S(\Fg^*)_{[2n]}$ be the truncated polynomial algebra on $\Fg$. With the same structure as it is defined in Example \ref{Weil-example} one obtains the differential complex $W(\Fg, S(\Fg^*)_{[2n]})$.
}\end{example}

To be able to interpret the coefficient space further, we introduce the crossed product  algebra

\begin{equation}
\widetilde{D}(\Fg) := S(\Fg^*) \al U(\Fg)
\end{equation}

In the next proposition, by  $\widetilde\Fg$ we mean $\Fg^\ast\al \Fg$ with the Lie bracket defined in  \eqref{tdg}.

\begin{proposition}
The algebras  $\widetilde{D}(\Fg)$ and $  U(\Fg^* \al \Fg)$ are isomorphic.
\end{proposition}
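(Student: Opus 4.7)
The plan is to exhibit an explicit algebra isomorphism $\Phi\colon U(\widetilde\Fg)\to \widetilde D(\Fg)$ via the universal property of the enveloping algebra, and then verify bijectivity using the Poincar\'e--Birkhoff--Witt theorem.

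First I would define a linear map $\Phi_0\colon \widetilde\Fg\to \widetilde D(\Fg)$ by $\alpha\oplus X\mapsto \alpha\ot 1+1\ot X$, and check that it is a Lie algebra map when the target is equipped with the commutator bracket. The verification breaks into natural pieces: $[\alpha\ot 1,\beta\ot 1]=0$ since $S(\Fg^\ast)$ is commutative; $[1\ot X,1\ot Y]=1\ot [X,Y]$ is inherited from $U(\Fg)$; and the mixed commutators
\[
[1\ot X,\,\beta\ot 1]=\mathcal{L}_X(\beta)\ot 1, \qquad [\alpha\ot 1,\,1\ot Y]=-\mathcal{L}_Y(\alpha)\ot 1
\]
are precisely the defining smash-product relations. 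Summing these reproduces exactly the bracket \eqref{tdg} on $\widetilde\Fg$, so by the universal property of $U(\widetilde\Fg)$, $\Phi_0$ extends uniquely to a unital algebra homomorphism $\Phi\colon U(\widetilde\Fg)\to \widetilde D(\Fg)$.

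To conclude that $\Phi$ is an isomorphism, I would construct its inverse via PBW. Since $\Fg^\ast$ is an abelian ideal of $\widetilde\Fg$ complemented by the Lie subalgebra $\Fg$, applying PBW to the ordered basis $\theta^1,\ldots,\theta^N,X_1,\ldots,X_N$ furnishes a linear isomorphism
\[
\mu\colon S(\Fg^\ast)\ot U(\Fg)\;\xrightarrow{\;\sim\;}\;U(\widetilde\Fg),\qquad f\ot u\mapsto f\cdot u,
\]
where I have identified $U(\Fg^\ast)=S(\Fg^\ast)$. The composition $\Phi\circ\mu$ is the identity on the generators $\theta^i\ot 1$ and $1\ot X_i$, and both maps are multiplicative, so $\Phi\circ\mu=\Id$, and hence $\Phi$ is bijective.

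The most delicate point is showing that the PBW linear isomorphism $\mu$ actually transports the smash-product multiplication of $\widetilde D(\Fg)$ to the product in $U(\widetilde\Fg)$. This reduces to the identity
\[
X\cdot f \;=\; \mathcal{L}_X(f) + f\cdot X \qquad \text{in}\quad U(\widetilde\Fg),
\]
for every $X\in\Fg$ and every $f\in S(\Fg^\ast)$, which I would prove by induction on the polynomial degree of $f$ starting from the defining relation $[X,\theta^i]=\mathcal{L}_X(\theta^i)$ in $\widetilde\Fg$ and applying the Leibniz rule for $\ad_X$. Once this is settled, matching the two multiplications becomes automatic and the proposition follows.
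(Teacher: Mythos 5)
Your argument is correct, but it takes a genuinely different route from the paper. The paper disposes of this proposition in one line by invoking a general structure theorem (Theorem 7.2.3 of Majid's book) which identifies the enveloping algebra of a semidirect product Lie algebra with the corresponding crossed product of enveloping algebras, so that $U(\Fg^*\al\Fg)=U(\Fg^*)\al U(\Fg)=S(\Fg^*)\al U(\Fg)=\widetilde D(\Fg)$. You instead prove this special case by hand: the map $\alpha\oplus X\mapsto \alpha\ot 1+1\ot X$ is checked to be a Lie algebra map into the commutator bracket of $\widetilde D(\Fg)$ (your computation of the mixed commutators against the smash relations is exactly what reproduces \eqref{tdg}), the universal property gives $\Phi\colon U(\widetilde\Fg)\to\widetilde D(\Fg)$, and PBW applied to the ordered basis $\theta^1,\dots,\theta^N,X_1,\dots,X_N$ gives the linear isomorphism $\mu$; the commutation identity $X\cdot f=\mathcal{L}_X(f)+f\cdot X$ in $U(\widetilde\Fg)$ follows by the derivation property of $\ad_X$ as you say. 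What the paper's citation buys is brevity and the fact that the general theorem also keeps track of the Hopf-algebraic (bicrossproduct) structure; what your proof buys is a self-contained, elementary verification that makes the isomorphism explicit on generators. One small streamlining: once $\Phi$ is known to be an algebra map you get $\Phi(\mu(f\ot u))=\Phi(f)\Phi(u)=(f\ot 1)(1\ot u)=f\ot u$ directly, so $\Phi\circ\mu=\Id$ and hence $\Phi=\mu^{-1}$ without ever needing $\mu$ to be multiplicative; as written, your middle paragraph appeals to the multiplicativity of $\mu$ before you establish it in the last paragraph, so either reorder those steps or drop the appeal altogether.
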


\begin{proof}
It is a simple  case of \cite[Theorem 7.2.3]{Maji}, that is
\begin{equation}
U(\Fg^* \al \Fg) = U(\Fg^*) \al U(\Fg) = S(\Fg^*) \al U(\Fg) = \widetilde{D}(\Fg)
\end{equation}
\end{proof}

Next, we recall the compatibility for a module over a crossed product algebra, for a proof see  \cite[Lemma 3.6]{RangSutl}.

\begin{lemma}
Let $\mathcal{H}$ be a Hopf algebra, and $A$ an $\mathcal{H}$-module algebra. Then $V$ is a right module on the crossed product algebra $A \al \mathcal{H}$ if and only if $V$ is a right module on $A$ and a right module on  $\mathcal{H}$ such that
\begin{align}\label{4}
(v \cdot h) \cdot a = (v \cdot (h\ps{1} \rhd a)) \cdot h\ps{2}
\end{align}
\end{lemma}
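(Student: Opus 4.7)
The plan is to verify the claimed correspondence by direct computation, leveraging the fact that $A \otimes 1$ and $1 \otimes \mathcal{H}$ generate $A \al \mathcal{H}$ as subalgebras and that the single nontrivial exchange law between them encodes precisely the compatibility \eqref{4}.

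First I would fix the smash product structure
\[
(a \otimes h)(b \otimes k) \;=\; a(h\ps{1} \rhd b) \otimes h\ps{2} k
\]
on $A \al \mathcal{H}$ and record the two canonical algebra embeddings $a \mapsto a \otimes 1$ and $h \mapsto 1 \otimes h$. The elementary consequence
\[
(1 \otimes h)(a \otimes 1) \;=\; (h\ps{1} \rhd a) \otimes h\ps{2}
\]
is the only genuine exchange relation in the smash product, and it will translate directly into \eqref{4}.

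For the forward implication, given a right $A \al \mathcal{H}$-module $V$, restriction along the two embeddings supplies right actions of $A$ and of $\mathcal{H}$ via $v \cdot a := v \cdot (a \otimes 1)$ and $v \cdot h := v \cdot (1 \otimes h)$. Applying $v$ on the left to both sides of the cross relation above and invoking associativity of the $A \al \mathcal{H}$-action then yields \eqref{4} immediately.

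For the reverse implication, I would define $v \cdot (a \otimes h) := (v \cdot a) \cdot h$ and verify associativity against the smash product by computing
\begin{align*}
(v \cdot (a \otimes h)) \cdot (b \otimes k)
&= (((v \cdot a) \cdot h) \cdot b) \cdot k \\
&= (((v \cdot a) \cdot (h\ps{1} \rhd b)) \cdot h\ps{2}) \cdot k \\
&= (v \cdot (a(h\ps{1} \rhd b))) \cdot h\ps{2} k \\
&= v \cdot ((a \otimes h)(b \otimes k)),
\end{align*}
where the second equality is precisely \eqref{4} applied to $v \cdot a$ in place of $v$, and the remaining steps use associativity of the $A$- and $\mathcal{H}$-actions together with the definition of $v \cdot (\, \cdot \,)$. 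The unit axiom is automatic. The only point requiring care is the convention matching (left versus right smash product, and which tensor slot carries the Hopf action); once that bookkeeping is pinned down, the argument is purely formal and presents no genuine obstacle.
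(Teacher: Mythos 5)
Your proof is correct: the forward direction by restricting along the embeddings $a\mapsto a\ot 1$, $h\mapsto 1\ot h$ and applying the exchange relation $(1\ot h)(a\ot 1)=(h\ps{1}\rhd a)\ot h\ps{2}$, and the converse by setting $v\cdot(a\ot h):=(v\cdot a)\cdot h$ and checking associativity against the smash product multiplication, is exactly the standard argument, and your conventions match the paper's (as seen in its later use of $\Phi(X_i)\Phi(\theta^j)=\Phi(X_i\ps{1}\rhd\theta^j)\Phi(X_i\ps{2})$). The paper itself gives no proof but defers to \cite[Lemma 3.6]{RangSutl}, and your verification is the same computation that reference carries out.
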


\begin{corollary}\label{module-cross}
Let $\Fg$ be a Lie algebra and $V$ be a vector space. Then, $V$ is a right module over $S(\Fg^*) \al U(\Fg)$ if and only if $V$ is a right module over $\Fg$, a right module over $S(\Fg^*)$ and \eqref{3} is satisfied.
\end{corollary}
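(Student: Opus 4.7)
The approach is to deduce the corollary from the preceding Lemma applied with $\mathcal{H} = U(\Fg)$ and $A = S(\Fg^\ast)$. The coadjoint representation of $\Fg$ on $\Fg^\ast$ extends uniquely by derivations to a $U(\Fg)$-module algebra structure on $S(\Fg^\ast)$, and a right $\Fg$-module structure on $V$ extends uniquely to a right $U(\Fg)$-module structure by the universal property of the enveloping algebra. Under these identifications, the Lemma reduces the corollary to the claim that its compatibility \eqref{4}, namely
$$(v \cdot h) \cdot a \;=\; (v \cdot (h\ps{1} \rhd a)) \cdot h\ps{2}$$
for all $h \in U(\Fg)$ and $a \in S(\Fg^\ast)$, is equivalent to its specialization \eqref{3} at the generators $h = X_j$ and $a = \theta^t$.

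The forward direction is immediate: since $\Delta(X_j) = X_j \ot 1 + 1 \ot X_j$, specializing \eqref{4} to $h = X_j$, $a = \theta^t$ unwinds the right-hand side to $v \lhd (X_j \rhd \theta^t) + (v \lhd \theta^t) \cdot X_j$, which is \eqref{3}. For the converse I would establish \eqref{4} in two stages. First, fixing $h = X_j$, I would induct on the polynomial degree of $a \in S(\Fg^\ast)$: the inductive step for $a_1 a_2$ uses the $U(\Fg)$-module algebra identity $X_j \rhd (a_1 a_2) = (X_j \rhd a_1) a_2 + a_1 (X_j \rhd a_2)$ together with associativity of the $S(\Fg^\ast)$-action on $V$, and reassembles the two instances of the inductive hypothesis into \eqref{4} for $a_1 a_2$. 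Second, with \eqref{4} known for all $h \in \Fg$ and all $a \in S(\Fg^\ast)$, I would extend to general $h \in U(\Fg)$ by induction on the length of PBW monomials: if \eqref{4} holds for $h_1$ and $h_2$ against all $a$, then applying it successively to $h_2$ and then to $h_1$ in $(v \cdot h_1 h_2) \cdot a = ((v \cdot h_1) \cdot h_2) \cdot a$ produces
$$(v \cdot (h_1\ps{1} \rhd (h_2\ps{1} \rhd a))) \cdot h_1\ps{2} h_2\ps{2},$$
which equals $(v \cdot ((h_1 h_2)\ps{1} \rhd a)) \cdot (h_1 h_2)\ps{2}$ because $\Delta$ is an algebra homomorphism and the $U(\Fg)$-action on $S(\Fg^\ast)$ is associative.

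The main technical point is the Sweedler bookkeeping in the $h$-extension step: one must carefully match the Sweedler components of $\Delta(h_1 h_2) = \Delta(h_1) \Delta(h_2)$ with the result of two successive applications of \eqref{4}. This is routine once organized properly, and everything else reduces to the universal properties invoked above.
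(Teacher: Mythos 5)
Your proposal is correct and follows the same route the paper intends: the paper states the corollary as an immediate consequence of the preceding Lemma with $\mathcal{H}=U(\Fg)$ and $A=S(\Fg^\ast)$, leaving the reduction of the compatibility \eqref{4} to its specialization \eqref{3} on generators implicit. Your two inductions (on the degree of $a$ using the derivation property of $X_j\rhd-$, and on the length of monomials in $h$ using $\Delta(h_1h_2)=\Delta(h_1)\Delta(h_2)$) are exactly the standard details being suppressed, and they go through as you describe.
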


We can now reformulate the Proposition \ref{22} as follows.

\begin{proposition}
The graded space  $(W^{\bullet}(\Fg,V),d_{\rm CE}+d_{\rm K})$ is a  complex if and only if $V$ is a unimodular stable right $\widetilde\Fg$-module.
\end{proposition}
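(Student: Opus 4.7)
My plan is to deduce this proposition from Proposition \ref{22} applied to the specific locally free $G$-differential algebra $A = \bigwedge \Fg^\ast$, and then to repackage the resulting algebraic conditions on the operators $L_k, L^k \in \End(V)$ as the single statement that $V$ is a unimodular stable right $\widetilde\Fg$-module.

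First I would verify that $A = \bigwedge \Fg^\ast$ satisfies the hypotheses of Proposition \ref{22}: it is graded commutative, the canonical element $\Theta = \sum_i X_i \ot \theta^i \in (\Fg \ot \wedge^1 \Fg^\ast)^G$ is an algebraic connection so that $A$ is locally free, and the Chevalley--Eilenberg relation $d(\theta^k) = -\tfrac{1}{2} C^k_{pq}\theta^p \theta^q$ on $\bigwedge \Fg^\ast$ ensures that the curvature vanishes, so $A$ is flat. With $L_k(v) := X_k \cdot v$ and $L^k(v) := v \lhd \theta^k$ as in the setup of Section 3.2, Propositions \ref{Koszul-diff}, \ref{CE-diff}, \ref{14}, and \ref{22} all become available in this context.

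Next, because $D = d_{\rm CE} + d_{\rm K}$ is a sum of operators of degrees $+1$ and $-1$, the identity $D^2 = 0$ decomposes by homogeneous degree into the three independent equations $d_{\rm CE}^2 = 0$, $d_{\rm K}^2 = 0$, and $d_{\rm CE} \circ d_{\rm K} + d_{\rm K} \circ d_{\rm CE} = 0$. By Propositions \ref{Koszul-diff} and \ref{CE-diff} the first two are equivalent respectively to $[L^p,L^q] = 0$ and $[L_t, L_l] = C^k_{tl} L_k$, while Proposition \ref{22} identifies the third with the conjunction of the unimodular stability condition $\sum_k L^k L_k = 0$ and the cross-bracket $[L^i, L_j] = C^i_{jk} L^k$. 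Hence $(W^\bullet(\Fg,V), D)$ is a complex if and only if all four relations hold simultaneously.

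Finally I would recognise these four relations as the definition of a unimodular stable right $\widetilde\Fg$-module. Applying the bracket \eqref{tdg} on $\widetilde\Fg = \Fg^\ast \al \Fg$ to the basis $\{X_i, \theta^j\}$ gives $[X_i, X_j] = C^k_{ij} X_k$, $[\theta^i, \theta^j] = 0$, and $[\theta^i, X_j] = -\Lc_{X_j}(\theta^i) = C^i_{jk}\theta^k$. Consequently the three commutator identities on $L_k, L^k$ express exactly that the assignment $X_k \mapsto L_k$, $\theta^k \mapsto L^k$ respects all brackets in $\widetilde\Fg$, so it extends to a right $\widetilde\Fg$-module structure on $V$; and the remaining relation $\sum_k L^k L_k = 0$ is the unimodular stability by definition. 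Alternatively one can run this last step through Corollary \ref{module-cross} together with the isomorphism $\widetilde D(\Fg) \cong U(\widetilde\Fg)$ rather than verifying the Lie bracket relations directly. The only delicate point is tracking the sign in $\Lc_{X_j}(\theta^i) = -C^i_{jk}\theta^k$ so that the right-module conventions match consistently on both sides; beyond this bookkeeping no substantial obstacle is anticipated.
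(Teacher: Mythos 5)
Your proposal is correct and takes essentially the same route as the paper, which presents this proposition as a direct reformulation of Proposition \ref{22} specialized to $A=\bigwedge\Fg^\ast$, with the conditions \eqref{15} and \eqref{16} repackaged as the unimodular stable right $\widetilde\Fg$-module structure via Corollary \ref{module-cross} and the isomorphism $\widetilde{D}(\Fg)\cong U(\widetilde\Fg)$. Your additions --- verifying that $\bigwedge\Fg^\ast$ is commutative, locally free and flat, splitting $D^2=0$ by degree so as to invoke Propositions \ref{Koszul-diff} and \ref{CE-diff} for the squares, and checking the $\widetilde\Fg$-bracket relations directly --- only make explicit steps the paper leaves implicit (modulo the sign conventions for the coadjoint action, on which the paper itself is loose).
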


\begin{example}[Weil algebra with generalized coefficients \cite{AlekMein}]
{\rm    Let $\mathcal{E}'(\Fg^*)$ be the convolution algebra of compactly supported distributions on $\Fg^*$.
The symmetric algebra $S(\Fg^*)$ is canonically  identified with the subalgebra of distributions supported at the origin. This immediately results with a natural $S(\Fg^*)$-module structure on $\mathcal{E}'(\Fg^*)$ via its own multiplication.

Regarding the coordinate functions $\mu_i$, $1 \leq i \leq N$ as multiplication operators, we also have $[\mu_i,\theta^j] = \delta_j^i$.

The Lie derivative is described as follows.
\begin{equation}\label{11}
\Lc_i = C^k_{ij}\theta^j\mu_k, \quad 1 \leq i \leq N.
\end{equation}
 Therefore,
\begin{align}
\tau: \Fg \to \End(\mathcal{E}'(\Fg^*)), \quad X_i \mapsto C^k_{ji}\mu_k\theta^j = -L_{X_i} - \delta(X_i)I
\end{align}
is a map of Lie algebras, and hence equips $\mathcal{E}'(\Fg^*)$ with a right $\Fg$-module structure.

We first observe that
\begin{equation}
\sum_i (v \cdot X_i) \lhd \theta^i = C^k_{ji}v\mu_k\theta^j\theta^i = 0,
\end{equation}
by the commutativity of $S(\Fg^*)$ and the anti-commutativity of the lower indices of the structure coefficients.

Secondly we observe
\begin{align}
\begin{split}
& (v \cdot X_i) \lhd \theta^t = C^k_{ji}v\mu_k\theta^j\theta^t = \\
& C^k_{ji}v\theta^t\mu_k\theta^j + C^t_{ji}v\theta^j = (v \lhd \theta^t) \cdot X_i + v \lhd (X_i \rhd \theta^t),
\end{split}
\end{align}
\ie $\mathcal{E}'(\Fg^*)$ is a right module over $S(\Fg^*) \al U(\Fg)$. Hence we have the complex $W(\Fg,\mathcal{E}'(\Fg^*))$.

One notices that  in \cite{AlekMein} the authors consider compact groups and their Lie algebras which are unimodular and hence $\d=0$. So, their  and our actions of $\Fg$ coincide.
}\end{example}

\subsection{Weyl algebra}

 Following \cite{Wall} Appendix 1, let $V$ be a (finite dimensional) vector space with dual $V^*$. Let $\mathscr{P}(V)$ be the algebra of all polynomials on $V$ and $S(V)$ the symmetric algebra on $V$. Let us use the notation $D(V)$ for the algebra of differential operators on $V$ with polynomial coefficients - the Weyl algebra on $V$. For any $v \in V$ we introduce the operator
\begin{equation}
\partial_v(f)(w) := \dt f(w + tv).
\end{equation}
As a result,  we get an injective algebra map $v \mapsto \partial_v \in D(V)$. As a differential  operator on $V$, $\partial_v$ is identified with the derivative with respect to $v^* \in V^*$.

Using the bijective linear  map  $\mathscr{P}(V) \ot S(V) \to D(V)$ defined as $f \ot v \mapsto fv$, and the fact that    $\mathscr{P}(V) \cong S(V^*)$, we conclude that  $D(V) \cong S(V^*) \ot S(V)$ as vector spaces.

Following  \cite{Dixm}, the standard representation of $D(V)$ is as follows. Let $\{{v_1}^*, \cdots ,{v_n}^*\}$ be a basis of $V^*$. Then, forming $E = \mathbb{C}[{v_1}^*, \cdots ,{v_n}^*]$, we consider the operators $P_i \in \End(E)$ as $\partial/\partial {v_i}^*$ and $Q^i \in \End(E)$ as multiplication by ${v_i}^*$. Then the relations are
\begin{align}
\begin{split}
& [P_i,Q^i] = I, \qquad [P_i,Q^j] = 0, \quad i \neq j \\
& [P_i,P_j] = 0, \qquad [Q^i,Q^j] = 0, \quad \forall~ i, j
\end{split}
\end{align}

It is observed that if $V$ is a module over $D(\Fg)$  then $(W^{\bullet}(\Fg,V),d_{\rm CE}+d_{\rm K})$ is a  complex \cite{KumaVerg}. We now briefly remark the relation of this result with our interpretation of the coefficient space \eqref{model-complex}. To this end, we first notice that if $V$ is a right module over the Weyl algebra $D(\Fg)$, then it is module over the Lie algebra $\Fg$ via the Lie algebra map

\begin{equation}
\tau:\Fg \to D(\Fg), \quad X_i \mapsto C^l_{ki}P_lQ^k.
\end{equation}

 Explicitly, we define the action of the Lie algebra as
\begin{equation}
v \cdot X_k = v\tau(X_k).
\end{equation}

On the other hand, $V$ is also a module over the symmetric algebra $S(\Fg^*)$ via

\begin{equation}
v \lhd \theta^k = vQ^k
\end{equation}

\begin{lemma}
Let $V$ be a right module over $D(\Fg)$. Then $V$ is unimodular stable.
\end{lemma}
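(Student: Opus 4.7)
The plan is to compute $\sum_k L^k L_k$ directly from the definitions and exploit a symmetry/antisymmetry cancellation. Recall that the $\Fg$-action is induced by $\tau(X_i) = C^l_{ki} P_l Q^k$, so for $v \in V$ we have $L_k(v) = v \cdot X_k = v\, C^l_{jk} P_l Q^j$, while $L^k(v) = v \lhd \theta^k = v Q^k$. Therefore
\begin{equation*}
\sum_k L^k L_k (v) \;=\; \sum_k (v \cdot X_k)\lhd \theta^k \;=\; \sum_{k,j,l} v \, C^l_{jk}\, P_l Q^j Q^k.
\end{equation*}

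The key step is to observe the two symmetries at play on the indices $j$ and $k$. On one hand, the structure constants $C^l_{jk}$ of $\Fg$ are antisymmetric in their lower indices, i.e.\ $C^l_{jk} = -C^l_{kj}$, since they arise from the bracket $[X_j,X_k]$. On the other hand, the operators $Q^j$ and $Q^k$ commute in $D(\Fg)$ by the Weyl algebra relations $[Q^i,Q^j]=0$, so $Q^j Q^k = Q^k Q^j$ is symmetric in $j,k$. Pulling $P_l$ (which depends only on $l$) out of the $j,k$ sum, we get
\begin{equation*}
\sum_{k,j,l} v\, C^l_{jk}\, P_l Q^j Q^k \;=\; \sum_l v P_l \Big( \sum_{j,k} C^l_{jk}\, Q^j Q^k \Big) \;=\; 0,
\end{equation*}
because the inner sum pairs an antisymmetric tensor against a symmetric one.

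I do not expect any real obstacle; the lemma is essentially a bookkeeping check that the antisymmetry of the bracket of $\Fg$ is transported through $\tau$ into a cancellation against the commutativity of the multiplication operators $Q^k$. The only thing to be careful about is the placement of $P_l$ relative to the $Q^j, Q^k$: since $P_l$ does not commute with $Q^j$ in general, we must keep it on the left before performing the symmetry argument, which is exactly what the computation above does.
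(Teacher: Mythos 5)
Your proof is correct and is essentially the same as the paper's: both compute $\sum_k (v\cdot X_k)\lhd\theta^k = \sum_{j,k,l} v\, C^l_{jk}P_lQ^jQ^k$ and conclude it vanishes by pairing the antisymmetry of the structure constants in their lower indices against the commutativity of the $Q$'s, with $P_l$ kept to the left of the $Q$'s throughout.
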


\begin{proof}
We immediately observe that
\begin{equation}
\sum_i (v \cdot X_i) \lhd \theta^i = \sum_i v \cdot (\tau(X_i)Q^i) = \sum_{i,l,k}v \cdot (C^k_{li}P_kQ^lQ^i) = 0
\end{equation}
by the commutativity of $Q$s and the anti-commutativity of the lower indices of the structure coefficients.
\end{proof}

Next, to observe the condition \eqref{3}, we introduce the following map

\begin{equation}
\Phi:\widetilde{D}(\Fg) \to D(\Fg), \quad \theta^j \al X_i \mapsto Q^j\tau(X_i) = C^l_{ki}Q^jP_lQ^k
\end{equation}

\begin{lemma}
The map $\Phi:\widetilde{D}(\Fg) \to D(\Fg)$ is well-defined.
\end{lemma}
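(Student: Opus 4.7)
The plan is to factor $\Phi$ through the isomorphism $\widetilde{D}(\Fg)\cong U(\widetilde\Fg)$ provided by the preceding proposition, and then appeal to the universal property of the enveloping algebra. Concretely, it suffices to exhibit a Lie algebra homomorphism $\phi\colon \widetilde\Fg\to D(\Fg)$, where $D(\Fg)$ is viewed as a Lie algebra under its commutator bracket, that sends $\theta^j\mapsto Q^j$ and $X_i\mapsto \tau(X_i)$; the induced algebra map $U(\widetilde\Fg)\to D(\Fg)$ then transports via the isomorphism to the desired algebra map $\Phi$, and on a simple tensor $\theta^j\al X_i$ it yields precisely $Q^j\tau(X_i)=C^l_{ki}Q^jP_lQ^k$, as required.

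To establish that $\phi$ is a Lie algebra map I would check compatibility with the three families of brackets that generate the Lie structure on $\widetilde\Fg=\Fg^\ast\al\Fg$ described in \eqref{tdg}. First, the abelian bracket $[\theta^i,\theta^j]=0$ on $\Fg^\ast$ is preserved because $[Q^i,Q^j]=0$ by the defining relations of the Weyl algebra. Second, the bracket $[X_i,X_j]=C^k_{ij}X_k$ on $\Fg$ is preserved because the composition $\tau\colon \Fg\to D(\Fg)$ has already been recorded as a Lie algebra map just above the statement, so $[\tau(X_i),\tau(X_j)]=C^k_{ij}\tau(X_k)$. Third, and this is the only genuine calculation, I need to verify that the mixed bracket $[X_i,\theta^j]=\mathcal{L}_{X_i}(\theta^j)=-C^j_{ik}\theta^k$ in $\widetilde\Fg$ is sent to the corresponding commutator in $D(\Fg)$. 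Using $\tau(X_i)=C^l_{ki}P_lQ^k$ and the Leibniz rule for commutators, together with $[Q^k,Q^j]=0$ and $[P_l,Q^j]=\delta^j_l$, one computes
\begin{equation}
[\tau(X_i),Q^j]=C^l_{ki}\bigl(P_l[Q^k,Q^j]+[P_l,Q^j]Q^k\bigr)=C^l_{ki}\delta^j_l Q^k=C^j_{ki}Q^k=-C^j_{ik}Q^k,
\end{equation}
which matches $\phi(-C^j_{ik}\theta^k)$ on the nose.

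Having checked the three bracket relations, $\phi$ is a Lie algebra homomorphism and the universal property of $U(\widetilde\Fg)$ produces a unique algebra homomorphism $\widetilde\phi\colon U(\widetilde\Fg)\to D(\Fg)$ extending $\phi$. Transporting $\widetilde\phi$ through the algebra isomorphism $\widetilde{D}(\Fg)\cong U(\widetilde\Fg)$ of the previous proposition, and evaluating on an element of the form $\theta^j\al X_i$, we recover exactly the formula $\theta^j\al X_i\mapsto Q^j\tau(X_i)=C^l_{ki}Q^jP_lQ^k$ defining $\Phi$; this shows $\Phi$ is well defined as an algebra map. The only step that requires any work is the verification of the mixed bracket, but it reduces to the one-line Weyl-algebra computation above, so I do not anticipate any real obstacle.
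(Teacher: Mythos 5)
Your proposal is correct. The route differs from the paper's in packaging rather than in substance: the paper defines $\Phi$ on the two tensor factors and verifies directly the crossed-product relation $\Phi(X_i)\Phi(\theta^j) = \Phi(X_i\ps{1} \rhd \theta^j)\Phi(X_i\ps{2})$ in $D(\Fg)$, which is a one-line rearrangement $-C^j_{ik}Q^k + C^l_{ki}Q^jP_lQ^k = C^l_{ki}P_lQ^kQ^j = \tau(X_i)Q^j$; you instead pass through the isomorphism $\widetilde{D}(\Fg)\cong U(\widetilde\Fg)$ of the preceding proposition and invoke the universal property of the enveloping algebra, reducing everything to showing that $\theta^j\mapsto Q^j$, $X_i\mapsto\tau(X_i)$ is a morphism of Lie algebras $\widetilde\Fg\to D(\Fg)$. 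The only nontrivial bracket check, $[\tau(X_i),Q^j]=C^j_{ki}Q^k=-C^j_{ik}Q^k$, is literally the same Weyl-algebra computation as the paper's, just organized as a commutator; the abelian bracket is $[Q^i,Q^j]=0$ and the $\Fg$-bracket is covered by the paper's earlier assertion that $\tau$ is a Lie algebra map (an input both proofs rely on, since the paper also needs $\Phi|_{U(\Fg)}$ to be an algebra map). What your framing buys is a cleaner conceptual statement — $\Phi$ is the enveloping-algebra map induced by a representation of $\widetilde\Fg$ on $D(\Fg)$ — at the cost of leaning on the identification of $\theta^j\al X_i$ with the product $\theta^jX_i$ in $U(\widetilde\Fg)$, which is implicit in the crossed-product decomposition $U(\Fg^*\al\Fg)=S(\Fg^*)\al U(\Fg)$ and is fine to use; the paper's direct check avoids that detour but hides the Lie-theoretic origin of the relation. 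Both are complete proofs.
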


\begin{proof}
It is enough to prove $\Phi(X_i)\Phi(\theta^j) = \Phi(X_i\ps{1} \rhd \theta^j)\Phi(X_i\ps{2})$. To this, we observe
\begin{align}
\begin{split}
& RHS = \Phi(X_i\ps{1} \rhd \theta^j)\Phi(X_i\ps{2}) = -C^j_{ik}Q^k + C^l_{ki}Q^jP_lQ^k \\
& = C^l_{ki}P_lQ^jQ^k = C^l_{ki}P_lQ^kQ^j = \Phi(X_i)\Phi(\theta^j) = LHS.
\end{split}
\end{align}
\end{proof}

\begin{corollary}
If $V$ is a right module over $D(\Fg)$, then $V$ is a right module over $\widetilde{D}(\Fg) = S(\Fg^*) \al U(\Fg)$.
\end{corollary}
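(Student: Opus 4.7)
The plan is to endow $V$ with a right $\widetilde{D}(\Fg)$-module structure by pullback along the algebra homomorphism $\Phi:\widetilde{D}(\Fg)\to D(\Fg)$ furnished by the preceding lemma. That is, for $v\in V$ and $u\in\widetilde{D}(\Fg)$, I would set $v\cdot u:=v\cdot\Phi(u)$. This manifestly defines a right $\widetilde{D}(\Fg)$-action, and on the generators of $\widetilde{D}(\Fg)$ it reproduces exactly the prescribed formulas $v\cdot X_i=v\tau(X_i)$ and $v\lhd\theta^k=vQ^k$.

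For a self-contained verification, I would alternatively invoke Corollary \ref{module-cross} and check three items: (i) $V$ is a right $\Fg$-module, which holds because $\tau:\Fg\to D(\Fg)$ is a Lie algebra homomorphism; (ii) $V$ is a right $S(\Fg^*)$-module, which holds because the operators $Q^k$ commute pairwise; and (iii) the crossed-product compatibility \eqref{3}, namely
\begin{equation*}
(v\cdot X_j)\lhd\theta^t \;=\; v\lhd(X_j\rhd\theta^t) + (v\lhd\theta^t)\cdot X_j.
\end{equation*}
Using the coadjoint action $X_j\rhd\theta^t=-C^t_{jk}\theta^k$, condition (iii) reduces to verifying the operator identity $[\tau(X_j),Q^t]=-C^t_{jk}Q^k$ in $D(\Fg)$, which is precisely the content of the computation performed in the preceding lemma to establish that $\Phi$ is well-defined.

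Since the only substantive calculation has already been carried out in the previous lemma, I do not anticipate any genuine obstacle; the corollary is essentially a formal consequence of the existence of $\Phi$ as an algebra map. The single point requiring care is bookkeeping with the Sweedler coproduct on the primitive generators $X_i\in U(\Fg)$, where $\Delta(X_i)=X_i\ot 1+1\ot X_i$, when translating the well-definedness identity $\Phi(X_i)\Phi(\theta^j)=\Phi(X_i\ps{1}\rhd\theta^j)\Phi(X_i\ps{2})$ into the compatibility \eqref{3} for the $V$-action.
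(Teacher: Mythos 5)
Your proposal is correct and matches the paper's (implicit) argument: the corollary is stated as an immediate consequence of the preceding lemma, i.e.\ one pulls the $D(\Fg)$-module structure back along the algebra map $\Phi:\widetilde{D}(\Fg)\to D(\Fg)$, and your reduction of the compatibility \eqref{3} to the commutator identity $[\tau(X_j),Q^t]=-C^t_{jk}Q^k$ is exactly the computation carried out in that lemma. Your alternative check via Corollary \ref{module-cross} is the same framework the paper sets up, so there is nothing genuinely different here.
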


\section{Lie algebra homology and Poincar\'e duality}
In this section, for any Lie algebra $\Fg$ and any stable $\widetilde\Fg$-module  $V$  we define  a complex dual to the model complex and establish a Poincar\'e duality between these two complexes. The need for this new complex will be justified in the next sections.

\subsection{Lie algebra homology}
Let  $\Fg$ be a Lie algebra and $V$ be a right  $\Fg$-module.  We recall the Lie algebra homology complex $C_q(\Fg, V) = \wg^q \Fg \ot V$ by
\begin{equation}
\xymatrix{\cdots \ar[r]^{\p_{\rm CE}\;\;\;\;\;\;\;\;} & C_2(\Fg,V) \ar[r]^{\p_{\rm CE}\;\;\;} & C_1(\Fg,V) \ar[r]^{\;\;\;\;\;\;\;\p_{\rm CE}} & V}
\end{equation}
where
 \begin{align}
 \begin{split}
& \p_{\rm CE}(X_0 \wdots X_{q-1} \ot v ) = \sum_i (-1)^i  X_0 \wdots \widehat{X}_i \wdots
X_{q-1} \ot v \cdot X_i + \\
& \sum_{i<j} (-1)^{i+j} [X_i, X_j] \wg X_0 \wdots \widehat{X}_i \wdots
\widehat{X}_j \wdots X_{q-1} \ot v \\
\end{split}
\end{align}
We call the homology of the complex $(C_{\bullet}(\Fg, V),\p_{\rm CE})$ the Lie algebra homology of $\Fg$ with coefficients in $V$ and denote it by $H_{\bullet}(\Fg,V)$.

\subsection{Poincar\'e duality}

Let  $V$ to be a right $\Fg$-module and right $S(\Fg^*)$-module. We introduce the graded vector space
$C^n(\Fg,V):= \wedge^n \Fg\ot V$ with two differentials:
\begin{align}
\begin{split}
& \p_{\rm CE}:C^{n+1}(\Fg,V) \to C^n(\Fg,V) \\
& Y_0 \wdots Y_n \ot v \mapsto  \sum_{j} (-1)^j Y_0 \wdots \widehat{Y}_j \wdots Y_n \ot v \cdot Y_j \\
&+ \sum_{j,k} (-1)^{j+k} [Y_j,Y_k] \cdot Y_0 \wdots \widehat{Y}_j \wdots \widehat{Y}_k \wdots Y_n \ot v
\end{split}
\end{align}
which is the Lie algebra homology boundary and the second one  by
\begin{equation}
\partial_{\rm K}:C^n(\Fg,V) \to C^{n+1}(\Fg,V), \quad Y_1 \wdots Y_n \ot v \mapsto \sum_i X_i \wg Y_1 \wdots Y_n \ot v \lhd \theta^i
\end{equation}

We first justify that $\p_{\rm K}$ is a differential.

\begin{lemma}
We have $\p_{\rm K} \circ \p_{\rm K} = 0$.
\end{lemma}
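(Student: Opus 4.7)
The plan is to compute $\partial_{\rm K}^2$ directly on a generator $Y_1 \wedge \cdots \wedge Y_n \otimes v$ and show that the result is a sum of terms that is symmetric in two indices but contracted against an antisymmetric wedge product, hence vanishes. This is the direct homological analogue of the argument for the Koszul differential $d_{\rm K}$ in Proposition \ref{Koszul-diff}, with the roles of $\iota_k$ and wedge-multiplication by $X_k$ interchanged.

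First I would apply $\partial_{\rm K}$ twice to obtain
\begin{equation*}
\partial_{\rm K}^2(Y_1 \wedge \cdots \wedge Y_n \otimes v) = \sum_{i,j} X_j \wedge X_i \wedge Y_1 \wedge \cdots \wedge Y_n \otimes (v \lhd \theta^i) \lhd \theta^j.
\end{equation*}
The key observation is then that the right $S(\Fg^*)$-module structure on $V$ is commutative, so
\begin{equation*}
(v \lhd \theta^i) \lhd \theta^j = v \lhd (\theta^i \theta^j) = v \lhd (\theta^j \theta^i) = (v \lhd \theta^j) \lhd \theta^i,
\end{equation*}
i.e.\ the coefficient is symmetric in the pair $(i,j)$. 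On the other hand, $X_j \wedge X_i = -X_i \wedge X_j$ is antisymmetric in $(i,j)$. Summing a symmetric tensor against an antisymmetric one over both indices gives zero, so $\partial_{\rm K}^2 = 0$.

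There is no real obstacle here: the only ingredient needed is that $V$ is a genuine (associative, commutative) right $S(\Fg^*)$-module, which is part of the standing hypothesis, together with the antisymmetry of the exterior product on $\wedge^\bullet \Fg$. In particular, this computation is dual to the one in Proposition \ref{Koszul-diff}, where the commutativity of the $L^k$ played exactly the role that the commutativity of the multiplication by $\theta^i$ plays here.
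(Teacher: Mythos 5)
Your proposal is correct and coincides with the paper's own argument: expanding $\p_{\rm K}\circ\p_{\rm K}$ yields $\sum_{i,j} X_j\wg X_i\wg Y_1\wdots Y_n\ot v\lhd\theta^i\theta^j$, which vanishes by the commutativity of the $S(\Fg^*)$-action paired with the antisymmetry of the wedge product. Nothing further is needed.
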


\begin{proof}
We  observe that by the commutativity of $S(\Fg^*)$ and the anti-commutativity of the wedge product we have
\begin{align}
\begin{split}
& \p_{\rm K} \circ \p_{\rm K} (Y_1 \wdots Y_n \ot v) = \sum_i \p_{\rm K}(X_i \wg Y_1 \wdots Y_n \ot v \lhd \theta^i) \\
& = \sum_{i,j}X_j \wg X_i \wg Y_1 \wdots Y_n \ot v \lhd \theta^i\theta^j = 0.
\end{split}
\end{align}

\end{proof}

 We say that a right $\widetilde\Fg$-module $V$ is stable if
\begin{equation}\label{9}
\sum_i (v \lhd \theta^i) \cdot X_i = 0.
\end{equation}

\begin{proposition}\label{7}
The complex $(C^{\bullet}(\Fg,V), \p_{\rm CE}+\p_{\rm K})$ is a  complex if and only if $V$ is stable  right $\widetilde{\Fg}$-module.
\end{proposition}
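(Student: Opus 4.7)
Since $\p_{\rm CE}^2 = 0$ follows from the usual Lie algebra homology of $\Fg$ and $\p_{\rm K}^2 = 0$ has just been established in the preceding lemma, $D := \p_{\rm CE}+\p_{\rm K}$ squares to zero if and only if the anticommutator $\p_{\rm CE}\p_{\rm K} + \p_{\rm K}\p_{\rm CE}$ vanishes on $\wedge^\bullet \Fg \otimes V$. The plan is to compute this anticommutator on a generic element $\omega = Y_1 \wedge \cdots \wedge Y_n \otimes v$, sort the resulting contributions by the shape of the surviving wedge factor, and then read off which structural conditions on $V$ are forced.

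Expanding, $\p_{\rm CE}\p_{\rm K}(\omega)$ splits into four families after applying $\p_{\rm CE}$ to each $X_i \wedge Y_1 \wedge \cdots \wedge Y_n \otimes v \lhd \theta^i$: (a) the position-zero term $\sum_i Y_1 \wedge \cdots \wedge Y_n \otimes (v \lhd \theta^i) \cdot X_i$, obtained by removing the inserted $X_i$; (b) terms $(-1)^j X_i \wedge Y_1 \wedge \cdots \widehat{Y_j} \cdots \otimes (v\lhd\theta^i)\cdot Y_j$; (c) terms $(-1)^k [X_i, Y_k] \wedge Y_1 \wedge \cdots \widehat{Y_k} \cdots \otimes v\lhd\theta^i$ from bracketing $X_i$ against some $Y_k$; (d) the pure bracket terms $(-1)^{j+k}[Y_j,Y_k] \wedge X_i \wedge \cdots \otimes v\lhd\theta^i$. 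Symmetrically, $\p_{\rm K}\p_{\rm CE}(\omega)$ produces (b'): $(-1)^{j-1} X_i \wedge \cdots \widehat{Y_j} \cdots \otimes (v\cdot Y_j)\lhd\theta^i$, and (d'): $(-1)^{j+k}X_i \wedge [Y_j,Y_k] \wedge \cdots \otimes v\lhd\theta^i$.

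The expected cancellations are: families (d) and (d') cancel by anticommutativity $X_i \wedge [Y_j,Y_k] = -[Y_j,Y_k] \wedge X_i$; the sum (b)+(b') collapses, via the $\widetilde\Fg$-compatibility $(v\cdot Y_j)\lhd\theta^i = (v\lhd\theta^i)\cdot Y_j + v\lhd(Y_j\rhd\theta^i)$ from \eqref{3}, into $-\sum_{i,j}(-1)^j X_i \wedge \cdots \widehat{Y_j} \cdots \otimes v \lhd (Y_j \rhd \theta^i)$; and this, in turn, cancels (c) after expanding $[X_i,X_{k'}] = C^l_{ik'}X_l$ and $X_j \rhd \theta^i = -C^i_{jl}\theta^l$ and invoking the antisymmetry $C^i_{lj}=-C^i_{jl}$. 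What survives all four cancellations is precisely family (a), which vanishes identically if and only if $\sum_i (v\lhd\theta^i)\cdot X_i = 0$, i.e.\ $V$ is stable. This gives the \emph{if} direction: stability together with the $\widetilde\Fg$-module compatibility force $D^2 = 0$.

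For the converse, I would test $D^2$ on low-degree elements. On $1\otimes v$ only family (a) survives, so $D^2(1\otimes v)=0$ is exactly the stability condition $\sum_i (v\lhd\theta^i)\cdot X_i = 0$. On $X_j\otimes v$, once the (a)-contribution is discarded by stability, the coefficient of each $X_i\otimes(\,\cdot\,)$ reduces after rearrangement to $(v\cdot X_j)\lhd\theta^i - (v\lhd\theta^i)\cdot X_j - v\lhd(X_j \rhd \theta^i)$, so $D^2(X_j\otimes v)=0$ forces \eqref{3}, upgrading the pair of module structures to a $\widetilde\Fg$-module structure. The main obstacle is purely combinatorial: careful bookkeeping of signs and indices in the four-family expansion. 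I would control this by indexing wedge positions consistently starting at $0$ and systematically pairing each term of $\p_{\rm CE}\p_{\rm K}$ with its natural counterpart in $\p_{\rm K}\p_{\rm CE}$, as sketched above.
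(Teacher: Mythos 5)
Your proposal is correct and takes essentially the same route as the paper: both arguments reduce $D^2=0$ to the vanishing of the anticommutator $\p_{\rm CE}\p_{\rm K}+\p_{\rm K}\p_{\rm CE}$ on a general element of $\wedge^\bullet\Fg\ot V$, organize the resulting terms so that the $\widetilde\Fg$-compatibility \eqref{3} and the stability condition \eqref{9} are precisely what make them cancel, and obtain the converse by evaluating on $1\ot v$ and on degree-one elements. Your write-up merely makes the cancellation bookkeeping (families (a)--(d)) more explicit than the paper's condensed version.
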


\begin{proof}
First we observe that  $V$ is right $\widetilde\Fg$-module if and only if
\begin{equation}
(v \cdot X_k) \lhd \theta^t = v \lhd (X_k \rhd \theta^t) + (v \lhd \theta^t) \cdot X_k, \quad 1 \leq k,t \leq N.
\end{equation}
On the one hand we have
\begin{align}
\begin{split}
& \p_{\rm CE}(\p_{\rm K}(Y_0 \wdots Y_n \ot v)) = \sum_i \p_{\rm CE}(X_i \wg Y_0 \wdots Y_n \ot v \lhd \theta^i) = \\
& \sum_i Y_0 \wdots Y_n \ot (v \lhd \theta^i) \cdot X_i + \sum_{i,j} (-1)^{j+1} X_i \wg Y_0 \wdots \widehat{Y}_j \wdots Y_n \ot (v \lhd \theta^i) \cdot Y_j + \\
& \sum_{i,j} (-1)^{j+1} [X_i,Y_j] \wg Y_0 \wdots \widehat{Y}_j \wdots Y_n \ot v \lhd \theta^i + \\
& \sum_{i,j} (-1)^{j+k} [Y_j,Y_k]\wg X_i \wg Y_0 \wdots \widehat{Y}_j \wdots \widehat{Y}_k \wdots Y_n \ot v \lhd \theta^i,
\end{split}
\end{align}
and on the other hand
\begin{align}
\begin{split}
& \p_{\rm K}(\p_{\rm CE}(Y_0 \wdots Y_n \ot v)) = \sum_{j} (-1)^j \p_{\rm K}(Y_0 \wdots \widehat{Y}_j \wdots Y_n \ot v \cdot Y_j) + \\
& \sum_{j,k} (-1)^{j+k} \p_{\rm K}([Y_j,Y_k] \wg Y_0 \wdots \widehat{Y}_j \wdots \widehat{Y}_k \wdots Y_n \ot v) = \\
& \sum_{i,j} (-1)^j X_i \wg Y_0 \wdots \widehat{Y}_j \wdots Y_n \ot (v \wg Y_j) \lhd \theta^i + \\
& \sum_{i,j,k} (-1)^{j+k+1} [Y_j,Y_k] \wg X_i \wg Y_0 \wdots \widehat{Y}_j \wdots \widehat{Y}_k \wdots Y_n \ot v \lhd \theta^i.
\end{split}
\end{align}
Therefore, the complex is a mixed complex if and only if
\begin{align}\label{8}
\begin{split}
& (\p_{\rm CE} \circ \p_{\rm K} + \p_{\rm K} \circ \p_{\rm CE})(Y_0 \cdots Y_n \ot v) = \sum_i Y_0 \cdots Y_n \ot (v \lhd \theta^i) \cdot X_i + \\
& \sum_{i,j} (-1)^{j+1} X_i \cdot Y_0 \cdots \widehat{Y}_j \cdots Y_n \ot [(v \lhd \theta^i) \cdot Y_j - (v \cdot Y_j) \lhd \theta^i] + \\
& \sum_{i,j} (-1)^{j+1} [X_i,Y_j] \cdot Y_0 \cdots \widehat{Y}_j \cdots Y_n \ot v \lhd \theta^i = 0
\end{split}
\end{align}

Now, if we assume that  $(C^{\bullet}(\Fg,V),\p_{\rm CE}+\p_{\rm K} )$ is a complex, then firstly it is easy to see that the stability condition \eqref{9} is equivalent to   $\big(\p_{\rm CE}+\p_{\rm K}\big)^2(1 \ot v)=0.$

Secondly,   the equation \eqref{8} yields that $V$ is a  $\widetilde\Fg$-module;
\begin{equation}\label{10}
X_i \ot (v \lhd \theta^i) \cdot Y - X_i \ot (v \cdot Y) \lhd \theta^i + [X_i,Y] \ot v \lhd \theta^i = 0.
\end{equation}

The converse argument is obvious.

\end{proof}

Recall that the derivation $\delta: \Fg\ra \Cb$ is the trace of the adjoint representation of $\Fg$ on itself.

\begin{proposition}\label{12}
A vector space $V$ is a unimodular stable  right  $\widetilde\Fg$-module, if and only if  $V \ot \mathbb{C}_{\delta}$ is a  stable right $\widetilde\Fg$-module.
\end{proposition}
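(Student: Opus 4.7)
The plan is to unpack both sides of the equivalence and reduce the whole statement to a single bookkeeping identity obtained by summing the $\widetilde\Fg$-module compatibility over a dual basis.

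I first extend the $\widetilde\Fg$-action on $V$ to $V\ot\Cb_\delta$ by the tensor-product rule, regarding $\Cb_\delta$ as a $\widetilde\Fg$-module with $\Fg$ acting via $\delta$ and $\Fg^\ast$ acting trivially (the only option consistent with the semidirect-product structure, since $\Fg^\ast$ is abelian and $\Cb_\delta$ is one-dimensional). Explicitly,
$$(v\ot 1)\star X=(v\cdot X+\delta(X)v)\ot 1,\qquad (v\ot 1)\lhd^\star\theta^k=(v\lhd\theta^k)\ot 1.$$
Substituting into the compatibility $((v\ot 1)\star X)\lhd^\star\theta^t=(v\ot 1)\lhd^\star(X\rhd\theta^t)+((v\ot 1)\lhd^\star\theta^t)\star X$, the two $\delta(X)(v\lhd\theta^t)$-contributions cancel and the identity collapses to $(v\cdot X)\lhd\theta^t=v\lhd(X\rhd\theta^t)+(v\lhd\theta^t)\cdot X$ on $V$. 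Hence $V\ot\Cb_\delta$ is a $\widetilde\Fg$-module if and only if $V$ is.

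The computational core is the identity
$$\sum_i X_i\rhd\theta^i=\sum_i\delta(X_i)\,\theta^i,$$
which follows from $X_i\rhd\theta^k=-C^k_{ij}\theta^j$, $\delta(X_j)=\sum_r C^r_{jr}$, and the antisymmetry $C^r_{ij}=-C^r_{ji}$ of the structure constants in the lower indices: the left side equals $-\sum_{i,j}C^i_{ij}\theta^j=\sum_{j}\delta(X_j)\theta^j$.

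Summing the $\widetilde\Fg$-compatibility with $X=X_i$ and $t=i$ over $i$, and invoking the identity above on the middle term, gives
$$\sum_i(v\cdot X_i)\lhd\theta^i=\sum_i\delta(X_i)(v\lhd\theta^i)+\sum_i(v\lhd\theta^i)\cdot X_i.$$
The left-hand side vanishes precisely when $V$ is unimodular stable in the sense of \eqref{15}. Unwinding the starred actions, the right-hand side is (tensored with $1$) exactly $\sum_i((v\ot 1)\lhd^\star\theta^i)\star X_i$, whose vanishing is the stability \eqref{9} of $V\ot\Cb_\delta$. The two conditions are therefore equivalent. The only delicate point is the sign bookkeeping behind the identity $\sum_i X_i\rhd\theta^i=\sum_i\delta(X_i)\theta^i$; once that is in hand, the rest is a routine unpacking of definitions.
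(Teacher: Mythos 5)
Your proof is correct and follows essentially the same route as the paper: twist only the $\Fg$-action by $\delta$, note that the $\delta$-terms cancel in the $\widetilde\Fg$-compatibility \eqref{3} so the module structures correspond, and convert unimodular stability \eqref{15} into stability \eqref{9} of $V\ot\Cb_\delta$ by summing \eqref{3} over a dual basis. The only difference is cosmetic: you make explicit the trace identity $\sum_i X_i\rhd\theta^i=\sum_i\delta(X_i)\theta^i$ that the paper uses implicitly, and you treat both directions symmetrically where the paper says the converse is similar.
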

\begin{proof}

Indeed, if $V$ is unimodular stable right  $\widetilde\Fg$-module, that is $\sum_i (v \lhd X_i) \cdot \theta^i = 0$,  for any $v\in V$, then
\begin{align}
\begin{split}
& \sum_i ((v \ot 1_{\mathbb{C}}) \cdot \theta^i) \lhd X^i = \sum_i (v \cdot \theta^i) \cdot X_i \ot 1_{\mathbb{C}} + v\delta(X_i) \ot 1_{\mathbb{C}}  \\
& =\sum_i (v \cdot X_i) \lhd \theta^i \ot 1_{\mathbb{C}} = 0,
\end{split}
\end{align}
which proves that $V\ot \Cb_{\d}$ is stable. Similarly we observe that for $1 \leq i,j \leq N$,
\begin{align}
\begin{split}
& ((v \ot 1_{\mathbb{C}}) \cdot X_j) \lhd \theta^i = (v \cdot X_j \ot 1_{\mathbb{C}} + v\delta(X_j) \ot 1_{\mathbb{C}}) \lhd \theta^i = \\
& ((v \cdot X_j) \lhd \theta^i + v\delta(X_j) \lhd \theta^i) \ot 1_{\mathbb{C}} = \\
& (v \lhd (X_j \rhd \theta^i) + (v \lhd \theta^i) \cdot X_j + v\delta(X_j) \lhd \theta^i) \ot 1_{\mathbb{C}} = \\
& (v \ot 1_{\mathbb{C}}) \lhd (X_j \rhd \theta^i) + ((v \ot 1_{\mathbb{C}}) \lhd \theta^i) \cdot X_j
\end{split}
\end{align}
\ie $V \ot \mathbb{C}_{\delta}$ is a right  $\widetilde\Fg$-module.
The converse argument is similar.

\end{proof}

Let us  briefly recall the Poincar\'e isomorphism by
\begin{equation}
 \FD_P:\wedge^p \Fg^* \to \wedge^{n-p}\Fg, \qquad  \eta \mapsto \iota(\eta)\varpi,
\end{equation}
where $\varpi = X_1 \wedge \cdots \wedge X_n$ is the covolume element of $\Fg$.  By definition
  $\iota(\theta^i): \wedge^{\bullet} \Fg \to \wedge^{\bullet-1}\Fg$ is given  by
\begin{align}
\langle \iota(\theta^i)\xi, \theta^{j_1} \wedge \cdots \wedge \theta^{j_{r-1}} \rangle := \langle \xi, \theta^i \wedge \theta^{j_1} \wedge \cdots \wedge \theta^{j_{r-1}} \rangle, \quad  \xi \in \wedge^r\Fg.
\end{align}
Finally, for $\eta = \theta^{i_1} \wedge \cdots \wedge \theta^{i_k}$, the interior multiplication $\iota(\eta): \wedge^\bullet\Fg \to \wedge^{\bullet-p}\Fg $ is a derivation of degree $-p$ defined by
\begin{align}
\iota(\eta) := \iota(\theta^{i_k}) \circ \cdots \circ \iota(\theta^{i_1}).
\end{align}

\begin{proposition}
Let $V$ be a right module over  stable right  $\widetilde\Fg$-module. Then the Poincar\'e isomorphism induces a map of complexes between the complex $W(\Fg,V \ot \mathbb{C}_{-\delta})$ and the complex $C(\Fg,V)$.
\end{proposition}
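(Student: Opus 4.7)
The plan is to extend the Poincar\'e isomorphism $\FD_P$ coefficient-wise and then verify separately that it intertwines the Chevalley-Eilenberg pieces and the Koszul pieces of the two total differentials. Define
\[
\tilde\FD_P:W^p(\Fg,V\ot\Cb_{-\delta})\longrightarrow C^{n-p}(\Fg,V),\qquad \alpha\ot v\ot 1_{-\delta}\longmapsto \FD_P(\alpha)\ot v,
\]
using that $\Cb_{-\delta}$ is one-dimensional. Before checking the intertwining, observe that the source is indeed a complex: by Proposition \ref{12} applied to $V\ot\Cb_{-\delta}$, the stability of $V$ forces $V\ot\Cb_{-\delta}$ to be unimodular stable, and then Proposition \ref{22} guarantees that $(W(\Fg,V\ot\Cb_{-\delta}),d_{\rm CE}+d_{\rm K})$ is a complex, while Proposition \ref{7} already handles the target. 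It then suffices to show $\tilde\FD_P\circ(d_{\rm CE}+d_{\rm K})=\pm(\partial_{\rm CE}+\partial_{\rm K})\circ\tilde\FD_P$ with a common sign.

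The Koszul step rests on the purely algebraic identity
\[
\FD_P(\iota_{X_i}\alpha)=(-1)^{p-1}\,X_i\wg\FD_P(\alpha),\qquad \alpha\in\wedge^p\Fg^*,
\]
which I would verify on basis elements $\alpha=\theta^{j_1}\wdots\theta^{j_p}$ by unfolding $\FD_P(\alpha)=\iota(\alpha)\varpi$ and using the factorization $\iota(\theta^i\wg\alpha)=\iota(\alpha)\iota(\theta^i)$ that is built into the paper's definition of $\iota(\eta)$. Since the $S(\Fg^*)$-action on $V\ot\Cb_{-\delta}$ is carried entirely by the $V$-factor, this identity immediately yields $\tilde\FD_P\circ d_{\rm K}=(-1)^{p-1}\,\partial_{\rm K}\circ\tilde\FD_P$ on $W^p(\Fg, V\ot \Cb_{-\delta})$.

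For the Chevalley-Eilenberg step, this is essentially classical Poincar\'e duality for Lie algebra (co)homology. The crucial ingredient is $\Lc_X\varpi=\delta(X)\varpi$: the covolume is not $\Fg$-invariant, but transforms by the modular character. This is precisely why the coefficients must be twisted by $\Cb_{-\delta}$, since the unwanted $\delta(X_i)$-term produced when $d_{\rm CE}$ meets $\varpi$ on the $W$-side is canceled by the $-\delta(X_i)$-term coming from the $\Fg$-action on $\Cb_{-\delta}$. Running the computation with the explicit formula \eqref{d-CE} for $d_{\rm CE}$ and matching it against $\partial_{\rm CE}$ from Section 4.1 then yields $\tilde\FD_P\circ d_{\rm CE}=(-1)^{p-1}\,\partial_{\rm CE}\circ\tilde\FD_P$, with the same sign as in the Koszul case.

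I expect the main obstacle to be the sign and combinatorial bookkeeping in the Chevalley-Eilenberg step: one must carefully match the wedge-form expression of $d_{\rm CE}$ on $\wedge^\bullet\Fg^*\ot V\ot\Cb_{-\delta}$ against the bracket-based formula for $\partial_{\rm CE}$ on $\wedge^\bullet\Fg\ot V$, and check that the modular-character corrections line up with consistent signs. Once both intertwinings are established with the common sign $(-1)^{p-1}$, adding them shows that $\tilde\FD_P$ commutes with the total differentials, and hence that the coefficient-wise extension of the Poincar\'e isomorphism is the required morphism of complexes between $W(\Fg,V\ot\Cb_{-\delta})$ and $C(\Fg,V)$.
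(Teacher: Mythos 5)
Your proposal follows essentially the same route as the paper's proof: extend the Poincar\'e isomorphism coefficient-wise (the paper works with $\FD_P^{-1}\colon \wedge^p\Fg\ot V\to \wedge^{N-p}\Fg^*\ot (V\ot\Cb_{-\delta})$, you with $\FD_P$ in the opposite direction), check the Koszul square by the contraction--wedge duality identity (the paper's direct pairing computation, which yields the sign $(-1)^{N-p-1}$ and agrees with your $(-1)^{p-1}$ after reversing direction), and settle the Chevalley--Eilenberg square by classical Poincar\'e duality with the modular twist (the paper cites Knapp; your identification of $\Lc_X\varpi=\delta(X)\varpi$ as the reason for the $\Cb_{-\delta}$-twist is exactly the right mechanism). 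Your preliminary observation that the source is a complex, via Proposition \ref{12} applied to $V\ot\Cb_{-\delta}$ together with Proposition \ref{22}, is also correct.

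The one step that does not hold as literally stated is the final assembly. If both squares commuted with the \emph{common} sign $(-1)^{p-1}$ on $W^p$, then $\tilde\FD_P$ would satisfy $\tilde\FD_P\circ(d_{\rm CE}+d_{\rm K})=(-1)^{p-1}(\p_{\rm CE}+\p_{\rm K})\circ\tilde\FD_P$, which is not commutation, and no degree-wise rescaling $c_p\,\tilde\FD_P$ can repair it: the CE square would force $c_{p+1}=(-1)^{p-1}c_p$ while the Koszul square (applied in degree $p+1$) forces $c_{p+1}=(-1)^{p}c_p$, a contradiction. For the two squares to glue, after rescaling, into a morphism of total complexes, the CE sign in degree $p$ must equal the Koszul sign in degree $p+1$; since your Koszul sign $(-1)^{p-1}$ is correct (it matches the paper's $(-1)^{N-p-1}$ computed in the inverse direction), the CE intertwining must come out as $(-1)^{p}$, not $(-1)^{p-1}$, and a compensating degree-dependent sign has to be built into $\tilde\FD_P$ to get literal commutation. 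This is precisely the bookkeeping you flagged as the expected obstacle; note that the paper is no more explicit here (it proves the Koszul square only up to sign and refers to Knapp for the CE square), so once the CE sign is actually computed and the compensating signs inserted, your argument is complete and of the same shape as the paper's.
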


\begin{proof}
Let us first introduce the notation $\widetilde{V}:=V \ot \mathbb{C}_{-\delta}$. We can identify $\widetilde{V}$ with $V$ as a vector space, but with the right $\Fg$-module structure deformed as $v \lhd X := v \cdot X - v\delta(X)$.

We prove the commutativity of the (co)boundaries via the (inverse) Poincar\'e isomorphism, \ie
\begin{align}
\begin{split}
& \FD_P^{-1}: \wedge^p \Fg \ot V \to \wedge^{N-p}\Fg^* \ot \widetilde{V} \\
& \xi \ot v \mapsto \FD_P^{-1}(\xi \ot v),
\end{split}
\end{align}
where for an arbitrary $\eta \in \wg^{N-p}\Fg$
\begin{align}
\langle \eta, \FD_P^{-1}(\xi \ot v) \rangle := \langle \eta\xi, \omega^* \rangle v.
\end{align}
Here, $\omega^* \in \wg^N\Fg^*$ is the volume form.

The commutativity of the diagram
$$
\xymatrix {
 \ar[d]_{\FD_P^{-1}} \wedge^p\Fg \ot V  \ar[r]^{\p_{\rm CE}} &  \wedge^{p-1}\Fg \ot V   \ar[d]^{\FD_P^{-1}} \\
 \wedge^{N-p}\Fg^* \ot \widetilde{V}   \ar[r]_{d_{\rm CE}} &  \wedge^{N-p+1}\Fg^* \ot \widetilde{V}
}
$$
follows from the Poincar\'e duality in Lie algebra homology - cohomology,
 \cite[Chapter VI, Section 3]{Knap}. For the commutativity of the diagram
$$
\xymatrix {
 \ar[d]_{\FD_P^{-1}} \wedge^p\Fg \ot V  \ar[r]^{\p_{\rm K}} &  \wedge^{p+1}\Fg \ot V   \ar[d]^{\FD_P^{-1}} \\
 \wedge^{N-p}\Fg^* \ot \widetilde{V}   \ar[r]_{d_{\rm K}} &  \wedge^{N-p-1}\Fg^* \ot \widetilde{V}
}
$$
we take an arbitrary $\xi \in \wg^p\Fg$, $\eta \in \wg^{N-p-1}\Fg$ and $v \in V$. Then
\begin{align}
\begin{split}
& \FD_P^{-1}(\p_{\rm K}(\xi \ot v))(\eta) = \langle \eta X_i \xi, \omega^* \rangle v \lhd \theta^i = \\
& (-1)^{N-p-1} \langle X_i \eta \xi, \omega^* \rangle v \lhd \theta^i = (-1)^{N-p-1}d_{\rm K}(\FD_P^{-1}(\xi \ot v))(\eta).
\end{split}
\end{align}
\end{proof}

\section{Lie algebra coaction and SAYD coefficients}

 In this section we identify the coefficients we discussed in the previous sections of this paper with stable-anti-Yetter-Drinfeld module over the universal enveloping algebra of the Lie algebra in question. To this end,   we  introduce the notion of comodule over a Lie algebra.

\subsection{SAYD modules and cyclic cohomology of Hopf algebras}

Let $\Hc$ be a Hopf algebra. By definition, a character $\d: \Hc\ra \Cb$ is an algebra map.
A group-like  $\s\in \Hc$ is the dual object of the character, \ie $\D(\s)=\s\ot \s$. The pair $(\d,\s)$ is called a modular pair in involution \cite{ConnMosc00} if
 \begin{equation}
 \d(\s)=1, \quad \text{and}\quad  S_\d^2=Ad_\s,
 \end{equation}
where  $Ad_\s(h)= \s h\s^{-1}$ and  $S_\d$ is defined by
\begin{equation}
S_\d(h)=\d(h\ps{1})S(h\ps{2}).
\end{equation}
We recall from \cite{HajaKhalRangSomm04-I} the definition of a  right-left  stable-anti-Yetter-Drinfeld module over a Hopf algebra $\Hc$. Let $V$ be a right module and left comodule over a Hopf algebra $\Hc$. We say that it is stable-anti-Yetter-Drinfeld (SAYD) module over $\Hc$ if
\begin{equation}
\Db(v\cdot h)= S(h\ps{3})v\ns{-1}h\ps{1}\ot v\ns{0}\cdot h\ps{2},\qquad  v\ns{0} \cdot v\ns{-1}=v,
\end{equation}
for any $v\in V$ and $h\in \Hc$.
It is shown in \cite{HajaKhalRangSomm04-I} that any MPI defines a one dimensional SAYD module and all one dimensional SAYD modules come this way.

Let $V$ be a right-left SAYD module over a Hopf algebra $\Hc$. Let
 \begin{equation}
 C^q(\Hc,V):= V\ot \Hc^{\ot q}, \quad q\ge 0.
 \end{equation}

We recall the following operators on $C^{\bullet}(\Hc,V)$
\begin{align*}
&\text{face operators} \quad\p_i: C^q(\Hc,V)\ra C^{q+1}(\Hc,V), && 0\le i\le q+1\\
&\text{degeneracy operators } \quad\s_j: C^q(\Hc,V)\ra C^{q-1}(\Hc,V),&& \quad 0\le j\le q-1\\
&\text{cyclic operators} \quad\tau: C^q(\Hc,V)\ra C^{q}(\Hc,V),&&
\end{align*}
by
\begin{align}
\begin{split}
&\p_0(v\ot h^1\odots h^q)=v\ot 1\ot h^1\odots h^q,\\
&\p_i(v\ot h^1\odots h^q)=v\ot h^1\odots h^i\ps{1}\ot h^i\ps{2}\odots h^q,\\
&\p_{q+1}(v\ot h^1\odots h^q)=v\ns{0}\ot h^1\odots h^q\ot v\ns{-1},\\
&\s_j (v\ot h^1\odots h^q)= (v\ot h^1\odots \ve(h^{j+1})\odots h^q),\\
&\tau(v\ot h^1\odots h^q)=v\ns{0}h^1\ps{1}\ot S(h^1\ps{2})\cdot(h^2\odots h^q\ot v\ns{-1}),
\end{split}
\end{align}
where $\Hc$ acts on $\Hc^{\ot q}$ diagonally.

The graded module $C(\Hc,V)$  endowed with the above operators is then a cocyclic module \cite{HajaKhalRangSomm04-II}, which means that $\p_i,$ $\s_j$ and $\tau$ satisfy the following identities
\begin{eqnarray}
\begin{split}
& \p_{j}  \p_{i} = \p_{i} \p_{j-1},   \hspace{35 pt}  \text{ if} \quad\quad i <j,\\
& \sigma_{j}  \sigma_{i} = \sigma_{i} \sigma_{j+1},    \hspace{30 pt}  \text{    if} \quad\quad i \leq j,\\
&\sigma_{j} \p_{i} =   \label{rel1}
 \begin{cases}
\p_{i} \sigma_{j-1},   \quad
 &\text{if} \hspace{18 pt}\quad\text{$i<j$}\\
\text{Id}   \quad\quad\quad
 &\text{if}\hspace{17 pt} \quad   \text{$i=j$ or $i=j+1$}\\
\p_{i-1} \sigma_{j}  \quad
 &\text{    if} \hspace{16 pt}\quad \text{$i>j+1$},\\
\end{cases}\\
&\tau\p_{i}=\p_{i-1} \tau, \hspace{43 pt} 1\le i\le  q\\
&\tau \p_{0} = \p_{q+1}, \hspace{43 pt} \tau \sigma_{i} = \sigma _{i-1} \tau,  \hspace{33 pt} 1\le i\le q\\ \label{rel2}
&\tau \sigma_{0} = \sigma_{n} \tau^2, \hspace{43 pt} \tau^{q+1} = \Id.
\end{split}
\end{eqnarray}

One uses the face operators to define the Hochschild coboundary
\begin{align}
\begin{split}
&b: C^{q}(\Hc,V)\ra C^{q+1}(\Hc,V), \qquad \text{by}\qquad b:=\sum_{i=0}^{q+1}(-1)^i\p_i
\end{split}
\end{align}
It is known that $b^2=0$. As a result, one obtains the Hochschild complex of the coalgebra $\Hc$ with coefficients in the bicomodule $V$. Here, the right comodule defined trivially. The cohomology of $(C^\bullet(\Hc,V),b)$ is denoted by $H^\bullet_{\rm coalg}(H,V)$.

One uses the rest of the operators to define the Connes boundary operator,
\begin{align}
\begin{split}
&B: C^{q}(\Hc,V)\ra C^{q-1}(\Hc,V), \qquad \text{by}\qquad B:=\left(\sum_{i=0}^{q}(-1)^{qi}\tau^{i}\right) \s_{q-1}\tau.
\end{split}
\end{align}

It is shown in \cite{Conn83} that for any cocyclic module we have $b^2=B^2=(b+B)^2=0$. As a result, one defines the cyclic cohomology of $\Hc$ with coefficients in   SAYD module $V$, which is  denoted by $HC^{\bullet}(\Hc,V)$, as the total  cohomology of the bicomplex
\begin{align}
C^{p,q}(\Hc,V)= \left\{ \begin{matrix} V\ot \Hc^{\ot q-p},&\quad\text{if}\quad 0\le p\le q, \\
&\\
0, & \text{otherwise.}
\end{matrix}\right.
\end{align}

One also defines the periodic cyclic cohomology of $\Hc$ with coefficients in $V$, which is denoted by $HP^\ast(\Hc,V)$, as the total cohomology of direct sum total  of the following bicomplex

\begin{align}
C^{p,q}(\Hc,V)= \left\{ \begin{matrix} V\ot \Hc^{\ot q-p},&\quad\text{if}\quad  p\le q, \\
&\\
0, & \text{otherwise.}
\end{matrix}\right.
\end{align}

It can be seen that the periodic cyclic complex and hence the cohomology is $\Zb_2$ graded.

\subsection{SAYD modules over Lie algebras}
  We need  to define the notion of comodule over a Lie algebra $\Fg$ to be able to make a passage from the stable $\widetilde\Fg$-modules we already defined in the previous sections to SAYD modules over the universal enveloping algebra $U(\Fg)$.

\begin{definition}
We say a vector space $V$ is a left comodule over the Lie algebra $\Fg$ if there is a map $\Db_{\Fg}: V \ra \Fg \ot V$ such that
\begin{equation}\label{g-comod}
v\nsb{-2}\wg v\nsb{-1}\ot v\nsb{0}=0,
\end{equation}
where $\Db_{\Fg}(v)=v\nsb{-1}\ot v\nsb{0}$, and $$v\nsb{-2}\ot v\nsb{-1}\ot v\nsb{0}= v\nsb{-1}\ot (v\nsb{0})\nsb{-1}\ot (v\nsb{0})\nsb{0}.$$
\end{definition}

\begin{proposition}\label{mod-comod}
Let $\Fg$ be a  Lie algebra and $V$ be  a vector space. Then, $V$ is a right $S(\Fg^*)$-module if and only if it is a left $\Fg$-comodule.
\end{proposition}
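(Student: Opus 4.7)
My plan is to establish the correspondence by translating the symmetric algebra $S(\Fg^\ast)$-action into a Lie coaction and back, using a basis.

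First, suppose $V$ is a right $S(\Fg^\ast)$-module with action $v \lhd -$. Since $S(\Fg^\ast)$ is generated by $\Fg^\ast = \mathrm{span}(\theta^1,\dots,\theta^N)$ subject only to commutativity relations, the action is completely determined by the operators $L^i(v) := v \lhd \theta^i$ satisfying $[L^i,L^j] = 0$ (this is exactly Proposition \ref{Koszul-diff}). Define a linear map
\begin{equation*}
\Db_\Fg : V \to \Fg \ot V, \qquad \Db_\Fg(v) := \sum_i X_i \ot (v \lhd \theta^i).
\end{equation*}
To verify the coaction identity \eqref{g-comod}, I would iterate:
\begin{equation*}
(\Id \ot \Db_\Fg)\Db_\Fg(v) = \sum_{i,j} X_i \ot X_j \ot (v \lhd \theta^i \theta^j).
\end{equation*}
Antisymmetrizing the first two legs via the wedge product, the expression $\sum_{i,j} X_i \wedge X_j \ot (v \lhd \theta^i\theta^j)$ vanishes because $\theta^i\theta^j = \theta^j\theta^i$ in $S(\Fg^\ast)$ while $X_i \wedge X_j = -X_j \wedge X_i$; thus $v\nsb{-2}\wedge v\nsb{-1}\ot v\nsb{0}=0$.

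Conversely, given a left $\Fg$-comodule $\Db_\Fg(v) = v\nsb{-1}\ot v\nsb{0}$, I would define operators
\begin{equation*}
L^i(v) := \langle \theta^i, v\nsb{-1}\rangle\, v\nsb{0},
\end{equation*}
and declare the right action on generators by $v \lhd \theta^i := L^i(v)$. To see this extends consistently to an action of the commutative algebra $S(\Fg^\ast)$, it suffices to verify $[L^i,L^j] = 0$. Writing $\Db_\Fg(v) = \sum_k X_k \ot v_k$, one gets $L^i(v) = v_i$ and $L^j(L^i(v)) = (v_i)_j$, where $\Db_\Fg(v_i) = \sum_k X_k \ot (v_i)_k$. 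The coaction identity $\sum_{k,l} X_k \wedge X_l \ot (v_k)_l = 0$ then forces $(v_i)_j = (v_j)_i$, which is exactly $L^i L^j = L^j L^i$. Extending multiplicatively (well-definedness on monomials follows from the same commutativity) gives a right $S(\Fg^\ast)$-module structure.

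Finally, I would check that the two constructions are mutually inverse: starting from an action, forming the coaction, and extracting the $L^i$'s recovers the original operators since $\langle \theta^i, X_k\rangle = \delta^i_k$; starting from a coaction, forming the action, and rebuilding the coaction reproduces $\Db_\Fg$ by completeness of the basis $\{X_i\}$ and the dual pairing. No serious obstacle is expected, since the content is essentially the linear duality between the commutative algebra $S(\Fg^\ast)$ and the cocommutative coalgebra structure on $\Fg$ (viewed as a Lie coalgebra via the Chevalley--Eilenberg bracket), with the commutativity of $L^i$'s matching antisymmetry of the wedge exactly.
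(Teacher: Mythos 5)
Your proposal is correct and follows essentially the same route as the paper: encode the $S(\Fg^\ast)$-action in the operators $L^i(v)=v\lhd\theta^i$ (equivalently an element of $\Fg\ot V$ by finite dimensionality of $\Fg$) and observe that commutativity of the $L^i$'s is exactly the vanishing of $v\nsb{-2}\wedge v\nsb{-1}\ot v\nsb{0}$, the only difference being that you work in a fixed basis while the paper phrases the same computation via the injectivity of the antisymmetrization map. (Your closing aside about the Chevalley--Eilenberg bracket is unnecessary --- the comodule condition used here involves no Lie cobracket --- but it does not affect the argument.)
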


\begin{proof}
Assume  that $V$ is a right module over the symmetric algebra $S(\Fg^*)$. Then for any $v \in V$ there is an element $v\nsb{-1} \ot v\nsb{0} \in \Fg^{**} \ot V \cong \Fg \ot V$ such that for any $\theta \in \Fg^*$
\begin{equation}
v \lhd \theta = v\nsb{-1}(\theta)v\nsb{0} = \theta(v\nsb{-1})v\nsb{0}.
\end{equation}
Hence define the linear map $\Db_{\Fg}: V \ra \Fg \ot V$ by
\begin{equation}
 v \mapsto v\nsb{-1}\ot v\nsb{0}.
\end{equation}
The compatibility needed for $V$ to be a right module over $S(\Fg^*)$,which is  $(v \lhd \theta) \lhd \eta - (v \lhd \eta) \lhd \theta = 0$ translates directly into
\begin{equation}
\alpha(v\nsb{-2} \wedge v\nsb{-1}) \ot v\nsb{0} = (v\nsb{-2} \ot v\nsb{-1} - v\nsb{-1} \ot v\nsb{-2}) \ot v\nsb{0} = 0,
\end{equation}
where $\alpha:\wedge^2\Fg \to U(\Fg)^{\ot\, 2}$ is the anti-symmetrization map. Since the anti-symmetrization is injective, we have
\begin{equation}
v\nsb{-2} \wedge v\nsb{-1} \ot v\nsb{0} = 0.
\end{equation}
Hence, $V$ is a left $\Fg$-comodule.

Conversely, assume that $V$ is a left $\Fg$-comodule via the map $\Db_{\Fg}:V \to \Fg \ot V$ defined by $  v \mapsto v\nsb{-1} \ot v\nsb{0}$. We define the right action
\begin{equation}
V \ot S(\Fg^*) \to V, \quad v \ot \theta \mapsto v \lhd \theta := \theta(v\nsb{-1})v\nsb{0},
\end{equation}
for any $\theta \in \Fg^*$ and any $v \in V$. Thus,
\begin{equation}
(v \lhd \theta) \lhd \eta - (v \lhd \eta) \lhd \theta = (v\nsb{-2} \ot v\nsb{-1} - v\nsb{-1} \ot v\nsb{-2})(\theta \ot \eta) \ot v\nsb{0} = 0,
\end{equation}
proving that $V$ is a right module over $S(\Fg^*)$.
\end{proof}

Having understood the relation between the left $\Fg$-coaction and right $S(\Fg^*)$-action, it is natural to investigate the relation with left $U(\Fg)$-coaction.

Let $\Db:V \to U(\Fg) \ot V$ be a left $U(\Fg)$-comodule structure on the linear space $V$. Then composing via the canonical projection $\pi:U(\Fg) \to \Fg$, we get a linear map $\Db_{\Fg}:V \to \Fg \ot V$.
$$
\xymatrix {
\ar[dr]_{\Db_{\Fg}} V \ar[r]^{\Db} & U(\Fg) \ot V \ar[d]^{\pi \ot id} \\
& \Fg \ot V
}
$$

\begin{lemma}
If $\Db:V \to U(\Fg) \ot V$ is a coaction, then so is $\Db_{\Fg}:V \to \Fg \ot V$.
\end{lemma}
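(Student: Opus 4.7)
The plan is to derive the Lie-coalgebra axiom \eqref{g-comod} for $\Db_{\Fg} = (\pi \ot \Id)\Db$ from only two facts about the ambient data: coassociativity of $\Db$ and cocommutativity of $U(\Fg)$. The projection $\pi:U(\Fg)\to \Fg$ will contribute nothing beyond linearity, so morally the content is that pushing any comodule over a cocommutative coalgebra along a linear projection to the space of primitive elements automatically lands one in the Lie-coalgebra axiom.

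First I would expand the iterated Lie coaction straight from the definition, getting
\begin{align*}
v\nsb{-2}\ot v\nsb{-1}\ot v\nsb{0}
&= (\Id \ot \Db_{\Fg})\Db_{\Fg}(v)
= (\pi\ot\pi\ot\Id)(\Id\ot\Db)\Db(v).
\end{align*}
Then I would use coassociativity of $\Db$ to trade $(\Id\ot\Db)\Db$ for $(\Delta\ot\Id)\Db$, rewriting the right-hand side as
\begin{equation*}
v\nsb{-2}\ot v\nsb{-1}\ot v\nsb{0}
= \pi(v\ns{-1}\ps{1})\ot \pi(v\ns{-1}\ps{2})\ot v\ns{0}.
\end{equation*}

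Next I would invoke cocommutativity of $U(\Fg)$, which gives the flip-invariance $v\ns{-1}\ps{1}\ot v\ns{-1}\ps{2} = v\ns{-1}\ps{2}\ot v\ns{-1}\ps{1}$ in $U(\Fg)\ot U(\Fg)$, and hence after applying $\pi\ot\pi$,
\begin{equation*}
v\nsb{-2}\ot v\nsb{-1}\ot v\nsb{0}
= v\nsb{-1}\ot v\nsb{-2}\ot v\nsb{0}.
\end{equation*}
Antisymmetrizing in the first two tensor slots therefore yields $v\nsb{-2}\wedge v\nsb{-1}\ot v\nsb{0} = 0$, which is precisely \eqref{g-comod}.

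There is essentially no serious obstacle; the whole argument is a three-line manipulation. The only care required is notational, keeping the angle-bracket indices $\ns{-i}$ (for the $U(\Fg)$-coaction) and the square-bracket indices $\nsb{-i}$ (for the $\Fg$-coaction) cleanly distinguished, and recalling that $\wedge^2\Fg \hookrightarrow \Fg\ot\Fg$ via antisymmetrization is injective, so symmetry in the first two factors genuinely forces the wedge to vanish.
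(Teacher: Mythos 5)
Your argument is correct and is essentially the paper's own proof: both expand the iterated $\Fg$-coaction as $(\pi\ot\pi\ot\Id)$ applied to the iterated $U(\Fg)$-coaction, use coassociativity to rewrite it via $\Delta$, and conclude from the cocommutativity of $U(\Fg)$ that the wedge vanishes. (The closing remark about injectivity of antisymmetrization is not needed — symmetry of the first two tensor factors alone kills the wedge — but this does not affect correctness.)
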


\begin{proof}
If we write $\Db(v) = v\snsb{-1} \ot v\snsb{0}$ then
\begin{align}
\begin{split}
& v\nsb{-2} \wedge v\nsb{-1} \ot v\nsb{0} = \pi(v\snsb{-2}) \wedge \pi(v\snsb{-1}) \ot v\snsb{0} = \\
& \pi(v\snsb{-1}\ps{1}) \wedge \pi(v\snsb{-1}\ps{2}) \ot v\snsb{0} = 0
\end{split}
\end{align}
by the cocommutativity of $U(\Fg)$.
\end{proof}

For the reverse process which is to obtain a $U(\Fg)$-comodule out of  a $\Fg$-comodule, we will need the following concept.
\begin{definition}\label{def locally nilpotent comodule}
Let $V$ be a $\Fg$-comodule via $\Db_{\Fg}:V \to \Fg \ot V$. Then we call the coaction locally conilpotent if it is conilpotent on any one dimensional subspace.
In other words, $\Db_{\Fg}:V \to \Fg \ot V$ is locally conilpotent if and only if for any $v \in V$ there exists $n \in \mathbb{N}$ such that $\Db^n_{\Fg}(v) = 0$.
\end{definition}

\begin{example}\rm{
If $V$ is an SAYD module on $U(\Fg)$, then by \cite[Lemma 6.2]{JaraStef} we have the filtration $V = \cup_{p \in \Zb}F_pV$ defined as $F_0V = V^{coU(\Fg)}$ and inductively
\begin{equation}
F_{p+1}V/F_pV = (V/F_pV)^{coU(\Fg)}
\end{equation}
Then the induced $\Fg$-comodule $V$ is locally conilpotent.
}\end{example}

\begin{example}\rm{
Let $\Fg$ be a Lie algebra and $S(\Fg^\ast)$ be the symmetric algebra on $\Fg^\ast$. For $V = S(\Fg^\ast)$, consider the coaction
\begin{equation}
S(\Fg^\ast) \to \Fg \ot S(\Fg^\ast),\quad \a \mapsto X_i \ot \a\t^i,
\end{equation}
called the Koszul coaction. The corresponding $S(\Fg^*)$-action on $V$ coincides with the multiplication of $S(\Fg^\ast)$. Therefore, the Koszul coaction is not locally conilpotent.

One notes that the Koszul coaction is locally conilpotent on any truncation of the symmetric algebra.
}\end{example}

Let  $\{U_k(\Fg)\}_{k \geq 0}$ be the canonical filtration of $U(\Fg)$, \ie
\begin{equation}
U_0(\Fg) = \Cb \cdot 1, \quad U_1(\Fg) = \Cb \cdot 1 \oplus \Fg, \quad U_p(\Fg) \cdot U_q(\Fg) \subseteq U_{p+q}(\Fg)
\end{equation}

Let us call an element in $U(\Fg)$ as symmetric homogeneous of degree $k$ if it is the canonical image of a symmetric homogeneous tensor of degree $k$ over $\Fg$. Let  $U^k(\Fg)$  be the set of all symmetric elements of degree $n$ in $U(\Fg)$.

We recall from  \cite[ Proposition 2.4.4]{Dixm} that

\begin{align}\label{aux-1}
U_k(\Fg) = U_{k-1}(\Fg) \oplus U^k(\Fg).
\end{align}
In other words, there is a (canonical) projection
\begin{align}
\begin{split}
& \theta_k:U_k(\Fg) \to U^k(\Fg) \cong U_k(\Fg)/U_{k-1}(\Fg) \\
& X_1 \cdots X_k \mapsto \sum_{\sigma \in S_k}X_{\sigma(1)} \cdots X_{\sigma(k)}.
\end{split}
\end{align}
So, fixing an ordered basis of the Lie algebra $\Fg$, we can say that the above map is bijective on the PBW-basis elements.

Let us  consider the unique derivation of $U(\Fg)$ extending the adjoint action of the Lie algebra $\Fg$ on itself,  and call it  $\ad(X):U(\Fg) \to U(\Fg)$ for any $X \in \Fg$.
By \cite[Proposition 2.4.9]{Dixm},  $\ad(X)(U^k(\Fg)) \subseteq U^k(\Fg)$ and $\ad(X)(U_k(\Fg)) \subseteq U_k(\Fg)$. So by applying  $\ad(X)$ to both sides of \eqref{aux-1},  we observe that  the preimage of $\ad(Y)(\sum_{\sigma \in S_k}X_{\sigma(1)} \cdots X_{\sigma(k)})$  is  $\ad(Y)(X_1 \cdots X_k)$.

\begin{proposition}\label{U(g)-coaction}
For a locally conilpotent $\Fg$-comodule $V$, the linear map
\begin{align}
\begin{split}
& \Db:V \to U(\Fg) \ot V \\
& v \mapsto 1 \ot v + \sum_{k \geq 1}\theta_k^{-1}(v\nsb{-k} \cdots v\nsb{-1}) \ot v\nsb{0}
\end{split}
\end{align}
defines a $U(\Fg)$-comodule structure.
\end{proposition}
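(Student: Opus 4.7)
I would prove the proposition in three stages: (i) the sum in $\Db(v)$ is well-defined and finite; (ii) the counit axiom $(\varepsilon\ot \Id)\Db=\Id$; and (iii) coassociativity $(\D\ot \Id)\Db=(\Id\ot \Db)\Db$.

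For (i), iterating the $\Fg$-comodule identity \eqref{g-comod} shows that the iterated coaction $v\nsb{-k}\ot\cdots\ot v\nsb{-1}\ot v\nsb{0}$ is symmetric in its first $k$ tensor legs for every $k\ge 1$. Indeed, \eqref{g-comod} expresses the symmetry of any two \emph{adjacent} legs after iteration, and since $S_k$ is generated by adjacent transpositions, full symmetry follows by induction on $k$. Therefore $v\nsb{-k}\cdots v\nsb{-1}$ represents an unambiguous element of $U^k(\Fg)$, independent of the order of multiplication, so $\theta_k^{-1}$ applies to it. Local conilpotency of $\Db_\Fg$ ensures that for each $v\in V$ only finitely many terms in the defining sum are nonzero. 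Part (ii) is immediate: $\varepsilon(1)=1$, while $\varepsilon$ annihilates every $\theta_k^{-1}(v\nsb{-k}\cdots v\nsb{-1})\in U^k(\Fg)$ for $k\ge 1$.

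For (iii), the key input is the classical fact that the PBW symmetrization $\omega:S(\Fg)\to U(\Fg)$ is an \emph{isomorphism of coalgebras} when $S(\Fg)$ is equipped with the primitive coproduct $\D_S(X)=X\ot 1+1\ot X$ for $X\in\Fg$; the map $\theta_k^{-1}$ restricted to symmetric tensors coincides with the degree-$k$ component of $\omega$, modulo the combinatorial normalization built into its definition. Consequently, for a symmetric tensor $v\nsb{-k}\cdots v\nsb{-1}$,
\begin{equation*}
\D\bigl(\theta_k^{-1}(v\nsb{-k}\cdots v\nsb{-1})\bigr)=\sum_{I\sqcup J=\{1,\dots,k\}}\theta_{|I|}^{-1}\Bigl(\prod_{i\in I}v\nsb{-i}\Bigr)\ot\theta_{|J|}^{-1}\Bigl(\prod_{j\in J}v\nsb{-j}\Bigr).
\end{equation*}
Summing over $k$ and tensoring with $v\nsb{0}$ then produces $(\D\ot \Id)\Db(v)$. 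On the other hand, $(\Id\ot\Db)\Db(v)$ is obtained by applying $\Db$ to the $v\nsb{0}$ slot; the nested coaction is rewritten via the recursive identity $v\nsb{-2}\ot v\nsb{-1}\ot v\nsb{0}=v\nsb{-1}\ot(v\nsb{0})\nsb{-1}\ot(v\nsb{0})\nsb{0}$ applied iteratively, so that the composition of an outer $j$-fold coaction on $v$ with an inner $m$-fold coaction on $v\nsb{0}$ is identified, up to relabelling, with the $(j+m)$-fold iteration on $v$ split into positions $\{-(j+m),\dots,-(m+1)\}$ and $\{-m,\dots,-1\}$.

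The main obstacle is the combinatorial reconciliation of the two expansions: the coproduct side carries a sum over the $2^k$ subsets $I\sqcup J=\{1,\dots,k\}$, whereas the iterated-coaction side carries a double sum indexed by pairs $(j,m)$ with $j+m=k$. This is exactly where the symmetry of step (i) is indispensable: it allows the $\binom{k}{j}$ subsets of fixed size $j$ to collapse to a single representative summand, and the resulting multiplicity $\binom{k}{j}$ is precisely cancelled by the combinatorial factor hidden in $\theta_k^{-1}$ (so that in effect $\frac{1}{k!}\binom{k}{j}=\frac{1}{j!\,m!}$ matches the product of the factors from the outer and inner occurrences of $\theta^{-1}$). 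After this matching the two expressions agree term by term, establishing coassociativity and hence the proposition.
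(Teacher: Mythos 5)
Your argument has the same skeleton as the paper's proof: the symmetry of the iterated coaction in its $\Fg$-legs, extracted from \eqref{g-comod} by the adjacent-transposition argument, is exactly the paper's coefficient identity \eqref{aux-55}; counitality is dismissed the same way; and coassociativity is established by matching the subset splitting of $\D$ on a symmetrized product against the composition of an outer $j$-fold with an inner $m$-fold coaction, which is precisely what the paper computes. The only real difference is packaging: the paper fixes a PBW basis, encodes the coaction in structure constants $\alpha^{ij_1\cdots j_k}_{l_k}$, and expands $\D(X_{j_1}\cdots X_{j_k})$ of ordered monomials directly, so that the key step becomes the explicit identity $\alpha^{ij_1\cdots j_k}_{l_k}=\alpha^{ir_1\cdots r_p}_{l_p}\alpha^{l_pq_1\cdots q_{k-p}}_{l_k}$, whereas you argue basis-freely and import the classical fact that PBW symmetrization $S(\Fg)\to U(\Fg)$ is a coalgebra isomorphism. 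Your route is more conceptual; the paper's is more elementary and keeps all normalizations visible.

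One caution: your displayed coproduct formula is not correct as literally written, because it distributes $\theta^{-1}$ over the subset sum. For $k=2$ and $v\nsb{-2}\ot v\nsb{-1}=X_1\ot X_2+X_2\ot X_1$ the left-hand side is $\D(X_1X_2)$, whose mixed part is $X_1\ot X_2+X_2\ot X_1$, while your right-hand side produces this term twice (once for each singleton $I$). The clean identities are either the subset-sum formula for the products themselves,
\begin{equation*}
\D\bigl(v\nsb{-k}\cdots v\nsb{-1}\bigr)=\sum_{I\sqcup J=\{1,\dots,k\}}\Bigl(\prod_{i\in I}v\nsb{-i}\Bigr)\ot\Bigl(\prod_{j\in J}v\nsb{-j}\Bigr),
\end{equation*}
or the single-split formula carrying the $\theta^{-1}$'s,
\begin{equation*}
\D\bigl(\theta_k^{-1}(v\nsb{-k}\cdots v\nsb{-1})\bigr)=\sum_{j+m=k}\theta_j^{-1}\bigl(v\nsb{-k}\cdots v\nsb{-(m+1)}\bigr)\ot\theta_m^{-1}\bigl(v\nsb{-m}\cdots v\nsb{-1}\bigr),
\end{equation*}
with the $j=0$ and $m=0$ terms included. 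Your closing paragraph performs exactly the $\binom{k}{j}/k!=1/(j!\,m!)$ bookkeeping that converts the first into the second, so the intended argument does go through; just be aware that the ``factor hidden in $\theta_k^{-1}$'' equals $1/k!$ only when the legs involve distinct basis vectors, and is a multinomial factor when indices repeat --- the cancellation still holds, but this is where the paper's computation with ordered coefficient tuples is the least painful way to check it.
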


\begin{proof}
For an arbitrary basis element $v^i \in V$, let us write
\begin{equation}\label{aux-54}
v^i\nsb{-1} \ot v^i\nsb{0} = \alpha^{ij}_k X_j \ot v^k
\end{equation}
where $\alpha^{ij}_k \in \mathbb{C}$. Then, by the coaction compatibility $v\nsb{-2} \wedge v\nsb{-1} \ot v\nsb{0} = 0$ we have
\begin{equation}
v^i\nsb{-2} \ot v^i\nsb{-1} \ot v^i\nsb{0} = \sum_{j_1 , j_2} \alpha^{ij_1j_2}_{l_2} X_{j_1} \ot X_{j_2} \ot v^{l_2},
\end{equation}
such that $\alpha^{ij_1j_2}_{l_2} := \alpha^{ij_1}_{l_1}\alpha^{l_1j_2}_{l_2}$ and $\alpha^{ij_1j_2}_{l_2} = \alpha^{ij_2j_1}_{l_2}$.

We have
\begin{equation}
\Db(v^i) = 1 \ot v^i + \sum_{k \geq 1}\sum_{j_1 \leq \cdots \leq j_k} \alpha^{ij_1 \cdots j_k}_{l_k} X_{j_1} \cdots X_{j_k} \ot v^{l_k},
\end{equation}

because  for $k \geq 1$
\begin{equation}
v^i\nsb{-k} \ot \cdots \ot v^i\nsb{-1} \ot v^i\nsb{0} = \sum_{j_1 , \cdots , j_k} \alpha^{ij_1 \cdots j_k}_{l_k} X_{j_1} \ot \cdots \ot X_{j_k} \ot v^{l_k},
\end{equation}
where $\alpha^{ij_1 \cdots j_k}_{l_k} := \alpha^{ij_1}_{l_1} \cdots \alpha^{l_{k-1}j_k}_{l_k}$,  and for any $\sigma \in S_k$ we have
\begin{equation}\label{aux-55}
\alpha^{ij_1 \cdots j_k}_{l_k} = \alpha^{ij_{\sigma(1)} \cdots j_{\sigma(k)}}_{l_k}.
\end{equation}

At this point, the counitality is immediate,
\begin{equation}
(\ve \ot id) \circ \Db (v^i) = v^i.
\end{equation}
On the other hand, to prove the coassociativity we first observe that
\begin{align}
\begin{split}
& (id \ot \Db) \circ \Db(v^i) = 1 \ot \Db(v^i) + \sum_{k \geq 1}\sum_{j_1 \leq \cdots \leq j_k} \alpha^{ij_1 \cdots j_k}_{l_k} X_{j_1} \cdots X_{j_k} \ot \Db(v^{l_k}) \\
& = 1 \ot 1 \ot v^i + \sum_{k \geq 1}\sum_{j_1 \leq \cdots \leq j_k} \alpha^{ij_1 \cdots j_k}_{l_k} 1 \ot X_{j_1} \cdots X_{j_k} \ot v^{l_k} + \\
& \sum_{k \geq 1}\sum_{j_1 \leq \cdots \leq j_k} \alpha^{ij_1 \cdots j_k}_{l_k} X_{j_1} \cdots X_{j_k} \ot 1 \ot v^{l_k} + \\
& \sum_{k \geq 1}\sum_{j_1 \leq \cdots \leq j_k} \alpha^{ij_1 \cdots j_k}_{l_k} X_{j_1} \cdots X_{j_k} \ot (\sum_{t \geq 1}\sum_{r_1 \leq \cdots \leq r_t} \alpha^{l_kr_1 \cdots r_t}_{s_t} X_{r_1} \cdots X_{r_t} \ot v^{s_t}),
\end{split}
\end{align}
where $\alpha^{l_kr_1 \cdots r_t}_{s_t} := \alpha^{l_kr_1}_{s_1} \cdots \alpha^{s_{t-1}r_t}_{s_t}$. Then we notice that
\begin{align}
\begin{split}
& \Delta(\sum_{k \geq 1}\sum_{j_1 \leq \cdots \leq j_k} \alpha^{ij_1 \cdots j_k}_{l_k} X_{j_1} \cdots X_{j_k}) \ot v^{l_k} = \sum_{k \geq 1}\sum_{j_1 \leq \cdots \leq j_k} \alpha^{ij_1 \cdots j_k}_{l_k} 1 \ot X_{j_1} \cdots X_{j_k} \ot v^{l_k} \\
& + \sum_{k \geq 1}\sum_{j_1 \leq \cdots \leq j_k} \alpha^{ij_1 \cdots j_k}_{l_k} X_{j_1} \cdots X_{j_k} \ot 1 \ot v^{l_k} \\
& + \sum_{k \geq 2}\sum_{j_1 \leq \cdots \leq r_1 \leq \cdots \leq r_p \leq \cdots \leq j_k} \alpha^{ij_1 \cdots j_k}_{l_k} X_{r_1} \cdots X_{r_p} \ot X_{j_1} \cdots \widehat{X}_{r_1} \cdots \widehat{X}_{r_p} \cdots X_{j_k} \ot v^{l_k} = \\
& \sum_{k \geq 1}\sum_{j_1 \leq \cdots \leq j_k} \alpha^{ij_1 \cdots j_k}_{l_k} 1 \ot X_{j_1} \cdots X_{j_k} \ot v^{l_k} + \\
& \sum_{k \geq 1}\sum_{j_1 \leq \cdots \leq j_k} \alpha^{ij_1 \cdots j_k}_{l_k} X_{j_1} \cdots X_{j_k} \ot 1 \ot v^{l_k} + \\
& \sum_{p \geq 1}\sum_{k-p \geq 1}\sum_{q_1 \leq \cdots \leq q_{k-p}}\sum_{r_1 \leq \cdots \leq r_p} \alpha^{ir_1 \cdots r_p}_{l_p}\alpha^{l_pq_1 \cdots q_{k-p}}_{l_k} X_{r_1} \cdots X_{r_p} \ot X_{q_1} \cdots X_{q_{k-p}} \ot v^{l_k},
\end{split}
\end{align}
where for the last equality we write the complement of $r_1 \leq \cdots \leq r_p$ in $j_1 \leq \cdots \leq j_k$ as $q_1 \leq \cdots \leq q_{k-p}$. Then \eqref{aux-55} implies that
\begin{equation}
\alpha^{ij_1 \cdots j_k}_{l_k} = \alpha^{ir_1 \cdots r_p q_1 \cdots q_{k-p}}_{l_k} = \alpha^{ir_1 \cdots r_p}_{l_p}\alpha^{l_pq_1 \cdots q_{k-p}}_{l_k}.
\end{equation}
As a result,
\begin{equation}
(id \ot \Db) \circ \Db(v^i) = (\Delta \ot id) \circ \Db(v^i).
\end{equation}
This is the coassociativity and the proof is now complete.
\end{proof}

Let us denote by $\rm  ^{\Fg}conil\Mc$ the subcategory of locally conilpotent left $\Fg$-comodules of the category of left $\Fg$-comodules $\, ^{\Fg}\Mc$ with colinear maps.

Assigning a $\Fg$-comodule $\Db_{\Fg}:V \to \Fg \ot V$ to a $U(\Fg)$-comodule $\Db:V \to U(\Fg) \ot V$ determines a functor
\begin{align}
\xymatrix{
^{U(\Fg)}\Mc  \ar@<.1 ex>[r]^{P} &  \, \rm  ^{\Fg}conil\Mc
}
\end{align}
Similarly, constructing a $U(\Fg)$-comodule from a $\Fg$-comodule determines a functor
\begin{align}
\xymatrix{
\rm  ^{\Fg}conil\Mc \ar@<.1 ex>[r]^{E} &  \, ^{U(\Fg)}\Mc
}
\end{align}

As a result, we can express the following proposition.

\begin{proposition}
The categories $\, \rm  ^{U(\Fg)}\Mc$ and $\, \rm  ^{\Fg}conil\Mc$ are isomorphic.
\end{proposition}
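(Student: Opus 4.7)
The plan is to show that the functors $P$ and $E$ constructed above are mutually inverse on objects, after which functoriality on morphisms will follow immediately from the explicit formulas. For the easier direction $P \circ E = \mathrm{id}$, start from a locally conilpotent $\Fg$-comodule $(V, \Db_{\Fg})$ and apply $\pi \ot \mathrm{id}$ to the $U(\Fg)$-coaction produced by Proposition \ref{U(g)-coaction}. The summand $1 \ot v$ is killed because $1 \in U_0(\Fg)$ has zero image in $\Fg \cong U_1(\Fg)/U_0(\Fg)$. For $k \geq 2$, the element $\theta_k^{-1}(v\nsb{-k} \cdots v\nsb{-1})$ lies in $U^k(\Fg) \subseteq U_k(\Fg)$, which also has zero image in $\Fg$ by \eqref{aux-1}. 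Only the $k=1$ summand survives, recovering $v\nsb{-1} \ot v\nsb{0}$ since $\theta_1^{-1}$ is the identity on $\Fg$. Hence $P \circ E$ acts as the identity on coactions.

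For the harder direction $E \circ P = \mathrm{id}$, fix a $U(\Fg)$-comodule $\Db$ and set $\Db_{\Fg} := (\pi \ot \mathrm{id}) \circ \Db$. Using an ordered PBW basis, expand $\Db(v) = 1 \ot v + \sum_{k \geq 1}\sum_{i_1 \leq \cdots \leq i_k} X_{i_1} \cdots X_{i_k} \ot v_{(i_1, \ldots, i_k)}$ (the constant term being $1 \ot v$ by counitality). I would then compute the iterated $\Fg$-coaction via
\begin{equation}
v\nsb{-k} \ot \cdots \ot v\nsb{-1} \ot v\nsb{0} = (\pi^{\ot k} \ot \mathrm{id}) \circ (\Delta^{k-1} \ot \mathrm{id}) \circ \Db(v).
\end{equation}
The core PBW-type identity to establish is that $(\pi^{\ot k}) \circ \Delta^{k-1}$ annihilates $U_{k-1}(\Fg)$ and sends the symmetric element $\theta_k(X_{i_1} \cdots X_{i_k}) \in U^k(\Fg)$ to the fully symmetrized tensor $\sum_{\sigma \in S_k} X_{i_{\sigma(1)}} \ot \cdots \ot X_{i_{\sigma(k)}}$; this follows from cocommutativity of $U(\Fg)$ and a straightforward induction on $k$ along the filtration. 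Applying $\theta_k^{-1}$ to the result in $\Fg^{\ot k}$ then returns precisely the degree-$k$ coefficient of $\Db(v)$ in the PBW expansion, and summing over $k$ reassembles $\Db$.

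The main obstacle is the PBW-type identity just described, coupled with the need to confirm that local conilpotence of $\Db_{\Fg}$ is equivalent to $\Db(v)$ living in some $U_n(\Fg) \ot V$ for each $v$, so that the reconstruction sum is finite (the example of the canonical SAYD filtration already hints at this equivalence, but it needs to be verified in full generality). Once both directions are in place on objects, compatibility with morphisms is automatic: any $U(\Fg)$-colinear map $f$ is $\Fg$-colinear since $(\pi \ot f) \circ \Db = \Db_{\Fg} \circ f$; conversely, a $\Fg$-colinear map between locally conilpotent comodules intertwines each higher symmetric component $\theta_k^{-1}(v\nsb{-k} \cdots v\nsb{-1}) \ot v\nsb{0}$ because the formula defining $E$ depends only on the iterates of $\Db_{\Fg}$, which $f$ already respects. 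Thus $P$ and $E$ yield an isomorphism of categories.
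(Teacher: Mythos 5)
Your proposal is correct and follows essentially the same route as the paper: both verify that $P$ and $E$ are mutually inverse, with the easy direction $P\circ E=\Id$ read off from the construction and the direction $E\circ P=\Id$ obtained by coassociativity, i.e.\ identifying the iterated $\Fg$-coaction with $(\pi^{\ot k}\ot\mathrm{id})\circ(\Delta^{k-1}\ot\mathrm{id})\circ\Db$ and comparing degree-$k$ PBW coefficients, which is exactly the paper's equation \eqref{aux-40}; your observation that this also forces local conilpotency of $P(\Db)$ is the same closing remark the paper makes. The only caveat is a normalization slip in your stated identity: $\pi^{\ot k}\circ\Delta^{k-1}$ sends the \emph{monomial} $X_{i_1}\cdots X_{i_k}$ to $\sum_{\sigma\in S_k}X_{i_{\sigma(1)}}\ot\cdots\ot X_{i_{\sigma(k)}}$, so applied to the symmetrized element $\theta_k(X_{i_1}\cdots X_{i_k})$ it produces an extra factor (e.g.\ $k!$ for distinct indices), a bookkeeping point about multiplicities that must be tracked when matching coefficients but does not affect the argument's substance.
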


\begin{proof}
We show that the functors
$$
\begin{xy}
\xymatrix{
^{U(\Fg)}\Mc  \ar@<.6 ex>[r]^{P} & \ar@<.6 ex>[l]^{E}  \, \rm  ^{\Fg}conil\Mc
}
\end{xy}
$$
are inverses to each other.

If $\Db_{\Fg}:V \to \Fg \ot V$ is a locally conilpotent $\Fg$-comodule and $\Db:V \to U(\Fg) \ot V$ the corresponding $U(\Fg)$-comodule, by the very definition  the $\Fg$-comodule corresponding to $\Db:V \to U(\Fg) \ot V$ is exactly $\Db_{\Fg}:V \to \Fg \ot V$. This proves that
\begin{equation}
P \circ E\; =\; \Id_{\rm  ^{\Fg}conil\Mc}.
\end{equation}
Conversely, let us start with a $U(\Fg)$-comodule $\Db:V \to U(\Fg) \ot V$ and write the coaction by using the PBW-basis of $U(\Fg)$ as follows
\begin{equation}
v^i\ps{-1} \ot v^i\ps{0} = 1 \ot v^i + \sum_{k \geq 1}\sum_{j_1 \leq \cdots \leq j_k}\gamma^{ij_1 \cdots j_k}_{l_k} X_{j_1} \cdots X_{j_k} \ot v^{l_k}.
\end{equation}
So, the corresponding $\Fg$-comodule $\Db_{\Fg}:V \to \Fg \ot V$ is given as follows
\begin{equation}
v^i\nsb{-1} \ot v^i\nsb{0} = \pi(v^i\ps{-1}) \ot v^i\ps{0} = \sum_j \gamma^{ij}_k X_j \ot v^k.
\end{equation}
Finally, the $U(\Fg)$-coaction corresponding to this $\Fg$-coaction is defined on $v^i \in V$ as
\begin{equation}
v^i \mapsto 1 \ot v + \sum_{k \geq 1}\sum_{j_1 \leq \cdots \leq j_k}\gamma^{ij_1}_{l_1}\gamma^{l_1j_2}_{l_2} \cdots \gamma^{l_{k-1}j_k}_{l_k} X_{j_1} \cdots X_{j_k} \ot v^{l_k}
\end{equation}
Therefore, we can recover $U(\Fg)$-coaction we started with if and only if
\begin{equation}\label{aux-40}
\gamma^{ij_1 \cdots j_k}_{l_k} = \gamma^{ij_1}_{l_1}\gamma^{l_1j_2}_{l_2} \cdots \gamma^{l_{k-1}j_k}_{l_k}, \quad \forall k \geq 1
\end{equation}
The equation \eqref{aux-40} is a consequence of the coassociativity $\Db$. Indeed,  applying the coassociativity as
\begin{align}
(\Delta^{k-1} \ot id) \circ \Db = \Db^k
\end{align}
and comparing the coefficients of $X_{j_1} \ot \cdots \ot X_{j_k}$ we  conclude \eqref{aux-40} for any $k \geq 1$. Hence,  we proved
\begin{equation}
E \circ P = \Id_{^{U(\Fg)}\Mc}.
\end{equation}

The equation \eqref{aux-40} implies that if $\Db:V \to U(\Fg) \ot V$ is a left coaction, then its associated  $\Fg$-coaction  $\Db_{\Fg}:V \to \Fg \ot V$ is locally conilpotent.

\end{proof}

For a $\Fg$-coaction
\begin{align}
v \mapsto v\nsb{-1} \ot v\nsb{0}
\end{align}

the associated  $U(\Fg)$-coaction  is denoted by
\begin{align}
v \mapsto v\snsb{-1} \ot v\snsb{0}.
\end{align}

\begin{definition}
Let $V$ be a right module and left comodule over a Lie algebra $\Fg$. We call $V$ a right-left AYD over $\Fg$ if
\begin{align}
\Db_{\Fg}(v \cdot X) = v\nsb{-1} \ot v\nsb{0} \cdot X + [v\nsb{-1}, X] \ot v\nsb{0}.
\end{align}
Moreover, $V$ is called stable if
\begin{align}
v\nsb{0} \cdot v\nsb{-1} = 0.
\end{align}
\end{definition}

\begin{proposition}
Let $\Db_{\Fg}:V \to \Fg \ot V$ be a locally conilpotent $\Fg$-comodule and $\Db:V \to U(\Fg) \ot V$ the corresponding $U(\Fg)$-comodule structure. Then, $V$ is a right-left AYD over $\Fg$ if and only if it is a right-left AYD over $U(\Fg)$.
\end{proposition}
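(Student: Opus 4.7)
The plan is to prove the two implications separately, using the explicit bijection between locally conilpotent $\Fg$-coactions and $U(\Fg)$-coactions established in the preceding proposition.

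For the forward implication I specialize the $U(\Fg)$-AYD axiom $\Db(v\cdot h) = S(h\ps{3})v\snsb{-1}h\ps{1}\ot v\snsb{0}\cdot h\ps{2}$ to a primitive element $h=X\in\Fg$; using $\Delta(X)=X\ot 1+1\ot X$ and $S(X)=-X$ the right hand side collapses to $[v\snsb{-1},X]\ot v\snsb{0} + v\snsb{-1}\ot v\snsb{0}\cdot X$. I then apply the projection $\pi\ot id:U(\Fg)\ot V\to\Fg\ot V$. The second summand passes to $v\nsb{-1}\ot v\nsb{0}\cdot X$ at once. For the first, I decompose $v\snsb{-1}$ along $U(\Fg)=\Cb\oplus\bigoplus_{k\geq 1}U^k(\Fg)$ and use the Dixmier fact (recalled just before Proposition \ref{U(g)-coaction}) that $\ad(X)$ stabilizes each $U^k(\Fg)$. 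Consequently the $U^k$-piece of $[v\snsb{-1},X]$ lies again in $U^k$ and is killed by $\pi$ for every $k\neq 1$; only the $k=1$ contribution survives, yielding $[v\nsb{-1},X]\ot v\nsb{0}$, which is exactly the $\Fg$-AYD condition.

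For the reverse implication I first verify the general fact that the $U(\Fg)$-AYD axiom is multiplicative in $h$: if it holds for $h_1$ and $h_2$, it holds for $h_1h_2$, by a direct manipulation using only that $V$ is a right module, $\Delta$ is an algebra map and $S$ is anti-multiplicative. Since the axiom is trivially satisfied at $h=1$ and $U(\Fg)$ is generated as an algebra by $\Fg$, it suffices to check the axiom on primitives $h=X\in\Fg$, i.e.\ to prove
\begin{equation*}
\Db(v\cdot X)\;=\;[v\snsb{-1},X]\ot v\snsb{0} + v\snsb{-1}\ot v\snsb{0}\cdot X.
\end{equation*}
I expand both sides via the explicit formula $\Db(v)=1\ot v+\sum_{k\geq 1}\theta_k^{-1}(v\nsb{-k}\cdots v\nsb{-1})\ot v\nsb{0}$ of Proposition \ref{U(g)-coaction} and match $U^k$-components. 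Iterating the $\Fg$-AYD condition produces the Leibniz-type expansion
\begin{equation*}
(v\cdot X)\nsb{-k}\ot\cdots\ot(v\cdot X)\nsb{0} \;=\; v\nsb{-k}\ot\cdots\ot v\nsb{-1}\ot v\nsb{0}\cdot X + \sum_{i=1}^{k}v\nsb{-k}\ot\cdots\ot[v\nsb{-i},X]\ot\cdots\ot v\nsb{-1}\ot v\nsb{0},
\end{equation*}
while the derivation property of $\ad(X)$ together with its preservation of $U^k(\Fg)$ gives, after symmetrization,
\begin{equation*}
[\theta_k^{-1}(v\nsb{-k}\cdots v\nsb{-1}),X]\;=\;\theta_k^{-1}\!\Big(\sum_{i=1}^k v\nsb{-k}\cdots [v\nsb{-i},X]\cdots v\nsb{-1}\Big).
\end{equation*}
Summing these matched pieces over $k\geq 0$ reassembles the required identity.

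The step I expect to be most delicate is the last compatibility: aligning, slot by slot, the commutator $[v\snsb{-1},X]$ in $U(\Fg)$ with the sum of Leibniz terms produced by the iterated $\Fg$-AYD expansion. This is where the full symmetry of $v\nsb{-k}\ot\cdots\ot v\nsb{-1}\ot v\nsb{0}$ in its first $k$ tensor slots (forced by \eqref{g-comod}) and the preservation of $U^k(\Fg)$ by $\ad(X)$ must be used together so that the two sides agree not merely modulo lower filtration but as honest equalities in $U^k\ot V$. Once this compatibility is secured, both directions close routinely.
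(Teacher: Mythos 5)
Your proposal is correct and takes essentially the same route as the paper's own proof: checking the $U(\Fg)$-AYD condition on primitives (with multiplicativity of the AYD condition handling general elements), using the iterated Leibniz expansion of the $\Fg$-AYD condition together with the fact that $\ad(X)$ commutes with the symmetrization $\theta_k$, and for the converse projecting the AYD identity at a primitive $X$ by $\pi\ot \Id$, using that $\pi$ commutes with the adjoint action. The only differences are presentational (order of the two implications and slightly more explicit justification of the multiplicativity and of $\pi\circ\ad=\ad\circ\pi$ via the decomposition into the $U^k(\Fg)$).
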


\begin{proof}
 Let us first assume $V$ to be a right-left AYD  module over $\Fg$.
For  $X \in \Fg$ and an element $v \in V$, AYD compatibility implies that
\begin{align}
\begin{split}
& (v \cdot X)\nsb{-k} \ot \cdots \ot (v \cdot X)\nsb{-1} \ot (v \cdot X)\nsb{0} = v\nsb{-k} \ot \cdots \ot v\nsb{-1} \ot v\nsb{0} \cdot X \\
& + [v\nsb{-k}, X] \ot \cdots \ot v\nsb{-1} \ot v\nsb{0} + v\nsb{-k} \ot \cdots \ot [v\nsb{-1}, X] \ot v\nsb{0}.
\end{split}
\end{align}
Multiplying in $U(\Fg)$, we get
\begin{align}
\begin{split}
& (v \cdot X)\nsb{-k} \cdots (v \cdot X)\nsb{-1} \ot (v \cdot X)\nsb{0}  = \\
& v\nsb{-k} \cdots v\nsb{-1} \ot v\nsb{0} \cdot X - ad(X)(v\nsb{-k} \cdots v\nsb{-1}) \ot v\nsb{0}.
\end{split}
\end{align}
So, for the extension $\Db:V \to U(\Fg) \ot V$ we have
\begin{align}
\begin{split}
& (v \cdot X)\snsb{-1} \ot (v \cdot X)\snsb{0} = 1 \ot v \cdot X + \sum_{k \geq 1}\theta_k^{-1}((v \cdot X)\nsb{-k} \cdots (v \cdot X)\nsb{-1}) \ot (v \cdot X)\nsb{0} \\
& = 1 \ot v \cdot X + \sum_{k \geq 1}\theta_k^{-1}(v\nsb{-k} \cdots v\nsb{-1}) \ot v\nsb{0} \cdot X - \sum_{k \geq 1}\theta_k^{-1}(ad(X)(v\nsb{-k} \cdots v\nsb{-1})) \ot v\nsb{0} \\
& = v\snsb{-1} \ot v\snsb{0} \cdot X - \sum_{k \geq 1}ad(X)(\theta_k^{-1}(v\nsb{-k} \cdots v\nsb{-1})) \ot v\nsb{0} \\
& = v\snsb{-1} \ot v\snsb{0} \cdot X - ad(X)(v\snsb{-1}) \ot v\snsb{0} = S(X\ps{3})v\snsb{-1}X\ps{1} \ot v\snsb{0} \cdot X\ps{2}.
\end{split}
\end{align}
Here on the third equality we used the fact that the operator $\ad$  commute with $\theta_k$, and on the fourth equality we used
\begin{align}
\begin{split}
& \sum_{k \geq 1}ad(X)(\theta_k^{-1}(v\nsb{-k} \cdots v\nsb{-1})) \ot v\nsb{0} = \\
& \sum_{k \geq 1}ad(X)(\theta_k^{-1}(v\nsb{-k} \cdots v\nsb{-1})) \ot v\nsb{0} + ad(X)(1) \ot v = ad(X)(v\snsb{-1}) \ot v\snsb{0}.
\end{split}
\end{align}
By  using the fact that  AYD condition is multiplicative,  we conclude that $\Db:M \to U(\Fg) \ot M$ satisfies the AYD condition on $U(\Fg)$.

Conversely assume that $V$ is a right-left AYD over $U(\Fg)$. We first observe that
\begin{align}
(\Delta \ot id) \circ \Delta (X) = X \ot 1 \ot 1 + 1 \ot X \ot 1 + 1 \ot 1 \ot X
\end{align}
Accordingly,
\begin{align}
\begin{split}
& \Db(v \cdot X) = v\snsb{-1}X \ot v\snsb{0} + v\snsb{-1} \ot v\snsb{0} \cdot X - Xv\snsb{-1} \ot v\snsb{0} \\
& = -ad(X)(v\snsb{-1}) \ot v\snsb{0} + v\snsb{-1} \ot v\snsb{0} \cdot X
\end{split}
\end{align}
It is known that  the projection map $\pi:U(\Fg) \to \Fg$ commutes with the adjoint representation. So
\begin{align}
\begin{split}
& \Db_{\Fg}(v \cdot X) = -\pi(ad(X)(v\snsb{-1})) \ot v\snsb{0} + \pi(v\snsb{-1}) \ot v\snsb{0} \cdot X \\
& = -ad(X)\pi(v\snsb{-1}) \ot v\snsb{0} + \pi(v\snsb{-1}) \ot v\snsb{0} \cdot X \\
& = [v\nsb{-1},X] \ot v\nsb{0} + v\nsb{-1} \ot v\nsb{0} \cdot X.
\end{split}
\end{align}
That  is,  $V$ is a right-left AYD over $\Fg$.
\end{proof}
\begin{lemma}
Let $\Db_{\Fg}:V \to \Fg \ot V$ be a locally conilpotent $\Fg$-comodule and $\Db:V \to U(\Fg) \ot V$ be the corresponding $U(\Fg)$-comodule structure. If $V$ is stable over $\Fg$, then it is stable over $U(\Fg)$.
\end{lemma}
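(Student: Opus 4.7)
My plan is to expand $v\snsb{0}\cdot v\snsb{-1}$ via the explicit formula for $\Db$ from Proposition~\ref{U(g)-coaction} and to reduce everything to an iterated application of the $\Fg$-stability condition $v\nsb{0}\cdot v\nsb{-1} = 0$.

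Substituting $\Db(v) = 1\ot v + \sum_{k\ge 1}\theta_k^{-1}(v\nsb{-k}\cdots v\nsb{-1})\ot v\nsb{0}$, the identity $v\snsb{0}\cdot v\snsb{-1} = v$ becomes
\begin{equation*}
v\nsb{0}\cdot \theta_k^{-1}(v\nsb{-k}\cdots v\nsb{-1}) = 0 \qquad \text{for each } k\ge 1.
\end{equation*}
The case $k=1$ is $\Fg$-stability itself. For $k\ge 2$ the first step is to apply the comodule compatibility $v\nsb{-2}\wg v\nsb{-1}\ot v\nsb{0} = 0$ iteratively to see that the tensor $v\nsb{-k}\ot\cdots\ot v\nsb{-1}\ot v\nsb{0}$ is totally symmetric in its $\Fg$-factors. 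Under the implicit Sweedler summation, this forces every permutation of the product $v\nsb{-k}\cdots v\nsb{-1}\in U(\Fg)$ to give the same element (the commutator corrections vanish because their antisymmetrizations in the $\Fg$-slots are zero), so $\theta_k(v\nsb{-k}\cdots v\nsb{-1}) = k!\,v\nsb{-k}\cdots v\nsb{-1}$ and hence $\theta_k^{-1}(v\nsb{-k}\cdots v\nsb{-1}) = \tfrac{1}{k!}\,v\nsb{-k}\cdots v\nsb{-1}$. The claim therefore reduces to showing $v\nsb{0}\cdot(v\nsb{-k}\cdots v\nsb{-1}) = 0$ in $V$.

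To establish this I would apply $\Fg$-stability at the innermost slot of the iterated coaction: writing $\Db_{\Fg}^{k} = (\Id^{\ot(k-1)}\ot\Db_{\Fg})\circ\Db_{\Fg}^{k-1}$ and invoking stability on the $V$-factor of $\Db_{\Fg}^{k-1}(v)$ gives
\begin{equation*}
v\nsb{-k}\ot\cdots\ot v\nsb{-2}\ot (v\nsb{0}\cdot v\nsb{-1}) = 0 \quad \text{in } \Fg^{\ot(k-1)}\ot V.
\end{equation*}
Applying the right $U(\Fg)$-action $X_1\ot\cdots\ot X_{k-1}\ot w\mpo w\cdot X_1 X_2\cdots X_{k-1}$ produces $v\nsb{0}\cdot(v\nsb{-1}v\nsb{-k}\cdots v\nsb{-2}) = 0$, and the total symmetry of the $\Fg$-indices then allows me to relabel and obtain $v\nsb{0}\cdot(v\nsb{-k}\cdots v\nsb{-1}) = 0$, as required. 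The main technical subtlety is the identification of $\theta_k^{-1}$ with the scalar $\tfrac{1}{k!}$ on the symmetric image of the iterated coaction; once this is in hand, the rest follows from iterated $\Fg$-stability, and local conilpotency of $\Db_{\Fg}$ ensures that only finitely many $k$ contribute for each $v$.
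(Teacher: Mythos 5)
Your final computation is the right mechanism: applying $\Fg$-stability to the innermost leg of the iterated coaction and then using the total symmetry of the $\Fg$-legs to reorder the factors does prove $v\nsb{0}\cdot\big(v\nsb{-k}\cdots v\nsb{-1}\big)=0$ for every $k\ge 1$, and this is essentially a coordinate-free version of the index manipulation in the paper's proof. The genuine gap is the identification $\theta_k^{-1}\big(v\nsb{-k}\cdots v\nsb{-1}\big)=\tfrac{1}{k!}\,v\nsb{-k}\cdots v\nsb{-1}$. Symmetry of the Sweedler-summed tensor does make all reordered \emph{summed} products equal, but $\theta_k$ and its inverse are defined through the PBW basis: $\theta_k^{-1}\big(v\nsb{-k}\cdots v\nsb{-1}\big)$ is by construction a combination of PBW monomials of degree exactly $k$, whereas $\tfrac{1}{k!}\,v\nsb{-k}\cdots v\nsb{-1}$, once rewritten in the PBW basis, picks up lower-degree commutator terms; ``symmetrize the word and divide by $k!$'' is not a well-defined operation on an element of $U(\Fg)$ and does not compute $\theta_k^{-1}$ on it. Concretely, let $[X_1,X_2]=X_2$ and take the locally conilpotent comodule $\Db_{\Fg}(w)=X_1\ot u_2+X_2\ot u_1$, $\Db_{\Fg}(u_1)=X_1\ot z$, $\Db_{\Fg}(u_2)=X_2\ot z$, $\Db_{\Fg}(z)=0$. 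Then $w\nsb{-2}w\nsb{-1}\ot w\nsb{0}=(X_1X_2+X_2X_1)\ot z$, so $\theta_2^{-1}(w\nsb{-2}w\nsb{-1})=X_1X_2$, while $\tfrac12\,w\nsb{-2}w\nsb{-1}=X_1X_2-\tfrac12 X_2$; the two differ by $\tfrac12[X_1,X_2]\neq 0$. (The antisymmetrized contraction of the tensor does vanish, but the correction above involves only half of the permutations and survives.)

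Consequently, what your argument actually establishes is stability of the coaction obtained by transporting the symmetric-algebra coaction along the symmetrization coalgebra map $S(\Fg)\to U(\Fg)$, which in general differs from the paper's $\Db$ by lower-filtration terms; the statement you need, $v\nsb{0}\cdot\theta_k^{-1}\big(v\nsb{-k}\cdots v\nsb{-1}\big)=0$, does not follow from $v\nsb{0}\cdot\big(v\nsb{-k}\cdots v\nsb{-1}\big)=0$ without a further argument that these correction terms act by zero. The repair is to run your innermost-slot stability argument directly on the paper's PBW presentation of the extended coaction, $\theta_k^{-1}\big(v\nsb{-k}\cdots v\nsb{-1}\big)\ot v\nsb{0}=\sum_{j_1\le\cdots\le j_k}\alpha^{ij_1\cdots j_k}_{l_k}\,X_{j_1}\cdots X_{j_k}\ot v^{l_k}$ with totally symmetric coefficients $\alpha$: use that symmetry to place an unrestricted summation on the factor that acts first and then invoke $\Fg$-stability, which is precisely how the paper's proof proceeds.
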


\begin{proof}
Writing the $\Fg$-coaction in terms of basis elements as in \eqref{aux-54}, the stability reads
\begin{align}
v^i\nsb{0}v^i\nsb{-1} = \alpha^{ij}_kv^k \cdot X_j = 0, \quad \forall i
\end{align}
Therefore, for the corresponding $U(\Fg)$-coaction we have
\begin{align}
\begin{split}
& \sum_{j_1 \leq \cdots \leq j_k} \alpha^{ij_1}_{l_1} \cdots \alpha^{l_{k-1}j_k}_{l_k} v^{l_k} \cdot  (X_{j_1} \cdots X_{j_k}) = \\
& \sum_{j_1 \leq \cdots \leq j_{k-1}} \alpha^{ij_1}_{l_1} \cdots \alpha^{l_{k-2}j_{k-1}}_{l_{k-1}} (\sum_{j_k} \alpha^{l_{k-1}j_k}_{l_k} v^{l_k} \cdot X_{j_1}) \cdot (X_{j_2} \cdots X_{j_k}) = \\
& \sum_{j_2, \cdots , j_k} \alpha^{ij_k}_{l_1} \cdots \alpha^{l_{k-2}j_{k-1}}_{l_{k-1}} (\sum_{j_1} \alpha^{l_{k-1}j_1}_{l_k} v^{l_k} \cdot X_{j_1}) \cdot (X_{j_2} \cdots X_{j_k}),
\end{split}
\end{align}
where on the second equality we used \eqref{aux-55}. This  immediately implies that
\begin{align}
v^i\snsb{0} \cdot v^i\snsb{-1} = v^i.
\end{align}
That is, the stability over $U(\Fg)$.
\end{proof}

However, the converse is not true.
\begin{example}{\rm
It is known that $U(\Fg)$, as a left $U(\Fg)$-comodule via $\Delta:U(\Fg) \to U(\Fg) \ot U(\Fg)$ and a right $\Fg$-module via $ad:U(\Fg) \ot \Fg \to U(\Fg)$ is stable. However, the associated  $\Fg$-comodule, is  no longer  stable. Indeed, for $u = X_1X_2X_3 \in U(\Fg)$, we have
\begin{align}
u\nsb{-1} \ot u\nsb{0} = X_1 \ot X_2X_3 + X_2 \ot X_1X_3 + X_3 \ot X_1X_2
\end{align}
Then,
\begin{align}
u\nsb{0} \cdot u\nsb{-1} = [[X_1,X_2],X_3] + [[X_2,X_1],X_3] + [[X_1,X_3],X_2] = [[X_1,X_3],X_2]
\end{align}
which is not necessarily zero.
}\end{example}

The following is the main result of this section.
\begin{proposition}\label{23}
Let $V$ be a vector space, and $\Fg$ be a Lie algebra. Then, $V$ is a stable right $\widetilde\Fg$-module if and only if it is a right-left SAYD module over $\Fg$.
\end{proposition}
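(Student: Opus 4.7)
The plan is to translate the two pieces of data of a stable right $\widetilde\Fg$-module into SAYD data over $\Fg$ using the module/comodule dictionary of Proposition \ref{mod-comod}. By Corollary \ref{module-cross}, a right $\widetilde\Fg$-module is precisely a right $\Fg$-module together with a right $S(\Fg^*)$-module structure satisfying the crossed-product compatibility \eqref{3}, and stability of the $\widetilde\Fg$-module is the condition \eqref{9}. Proposition \ref{mod-comod} identifies the right $S(\Fg^*)$-module structure with a left $\Fg$-comodule structure $\Db_{\Fg}(v)=v\nsb{-1}\ot v\nsb{0}$ via $v \lhd \theta = \theta(v\nsb{-1})\,v\nsb{0}$, and the underlying right $\Fg$-action is untouched. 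So it only remains to show that the crossed relation \eqref{3} translates to the AYD axiom over $\Fg$ and that the stability \eqref{9} translates to $v\nsb{0}\cdot v\nsb{-1}=0$.

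First I would rewrite \eqref{3} through the comodule dictionary. Using the coadjoint formula $(X \rhd \theta)(Y) = -\theta([X,Y])$ dictated by the bracket \eqref{tdg}, pairing both sides of \eqref{3} with an arbitrary $\theta \in \Fg^*$ yields
\[
\theta\bigl((v\cdot X)\nsb{-1}\bigr)\,(v\cdot X)\nsb{0} \;=\; \theta\bigl([v\nsb{-1},X]\bigr)\,v\nsb{0} \;+\; \theta(v\nsb{-1})\,v\nsb{0}\cdot X.
\]
Since this must hold for every $\theta\in\Fg^*$, it is equivalent to
\[
\Db_{\Fg}(v\cdot X) \;=\; [v\nsb{-1},X]\ot v\nsb{0} \;+\; v\nsb{-1}\ot v\nsb{0}\cdot X,
\]
which is precisely the AYD condition over $\Fg$ as defined earlier in the excerpt. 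The computation is manifestly reversible, giving the equivalence.

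Second, for the stability I would exploit that $\{X_i\}$ and $\{\theta^i\}$ are dual bases, so that every $Y\in\Fg$ satisfies $Y = \sum_i \theta^i(Y)\,X_i$. Then
\[
\sum_i (v\lhd \theta^i)\cdot X_i \;=\; \sum_i \theta^i(v\nsb{-1})\,v\nsb{0}\cdot X_i \;=\; v\nsb{0}\cdot v\nsb{-1},
\]
so \eqref{9} is literally the SAYD stability $v\nsb{0}\cdot v\nsb{-1}=0$. Combining the two translations yields both directions of the proposition.

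I do not foresee a serious obstacle. The one bookkeeping point that must be handled carefully is the sign convention for the coadjoint action: the bracket \eqref{tdg} forces $(X\rhd\theta)(Y) = -\theta([X,Y])$, and any other sign would produce a spurious $[X,v\nsb{-1}]$ in place of $[v\nsb{-1},X]$ and thereby destroy the match with the AYD axiom.
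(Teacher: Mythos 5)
Your proposal is correct and follows essentially the same route as the paper: both translate the $S(\Fg^*)$-action into a $\Fg$-coaction via Proposition \ref{mod-comod}, identify the crossed relation \eqref{3} with the AYD condition (the paper by an explicit dual-basis computation, you by pairing with an arbitrary $\theta\in\Fg^*$, which is the same calculation since $\Fg$ is finite dimensional), and observe that stability \eqref{9} is literally $v\nsb{0}\cdot v\nsb{-1}=0$. Your sign bookkeeping $(X\rhd\theta)(Y)=-\theta([X,Y])$ matches the paper's convention $X_j\rhd\theta^i=C^i_{kj}\theta^k$, so no issues.
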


\begin{proof}
Let us first assume that $V$ is a stable right $\widetilde\Fg$-module. Since  $V$ is a right $S(\Fg^*)$-module it is  a left $\Fg$-comodule by Proposition \ref{mod-comod}. Accordingly
\begin{align}
\begin{split}
& [v\nsb{-1}, X_j] \ot v\nsb{0} + v\nsb{-1} \ot v\nsb{0} \cdot X_j = \\
& [X_l,X_j]\theta^l(v\nsb{-1}) \ot v\nsb{0} + X_t\theta^t(v\nsb{-1}) \ot v\nsb{0} \cdot X_j = \\
& X_tC^t_{lj}\theta^l(v\nsb{-1}) \ot v\nsb{0} + X_t\theta^t(v\nsb{-1}) \ot v\nsb{0} \cdot X_j = \\
& X_t \ot [v \lhd (X_j \rhd \theta^t) + (v \lhd \theta^t) \cdot X_j] = \\
& X_t \ot (v \cdot X_j) \lhd \theta^t = X_t\theta^t((v \cdot X_j)\nsb{-1}) \ot (v \cdot X_j)\nsb{0} = \\
& (v \cdot X_j)\nsb{-1} \ot (v \cdot X_j)\nsb{0}
\end{split}
\end{align}
This proves that $V$ is a right-left AYD module over $\Fg$. On the other hand, for any $v \in V$,
\begin{equation}
v\nsb{0} \cdot v\nsb{-1} = \sum_i v\nsb{0} \cdot X_i\theta^i(v\nsb{-1}) = \sum_i (v \lhd \theta^i) \cdot X_i = 0
\end{equation}
Hence, $V$ is stable too. As a result, $V$ is SAYD over $\Fg$.

Conversely, assume that  $V$ is a  right-left SAYD module over $\Fg$. So $V$  is  a right module over $S(\Fg^*)$ and a right module over $\Fg$. In addition we see that
\begin{align}
\begin{split}
& v \lhd (X_j \rhd \theta^i) + (v \lhd \theta^i) \cdot X_j = C^i_{kj}v \lhd \theta^k + (v \lhd \theta^i) \cdot X_j = \\
& C^i_{kj}\theta^k(v\nsb{-1})v\nsb{0} + \theta^i(v\nsb{-1})v\nsb{0} \cdot X_j = \\
& \theta^i([v\nsb{-1},X_j])v\nsb{0} + \theta^i(v\nsb{-1})v\nsb{0} \cdot X_j = \\
& (\theta^i \ot id)([v\nsb{-1}, X_j] \ot v\nsb{0} + v\nsb{-1} \ot v\nsb{0} \cdot X_j) = \\
& \theta^t((v \cdot X_j)\nsb{-1}) (v \cdot X_j)\nsb{0} = (v \cdot X_j) \lhd \theta^i
\end{split}
\end{align}
Thus, $V$ is a right $\widetilde\Fg$-module by equation \eqref{3}. Finally, we prove  the stability by
\begin{equation}
\sum_i (v \lhd \theta^i) \cdot X_i = \sum_i v\nsb{0} \cdot X_i\theta^i(v\nsb{-1}) = v\nsb{0} \cdot v\nsb{-1} = 0.
\end{equation}
\end{proof}

\begin{corollary}
Any right module over the Weyl algebra $D(\Fg)$ is a right-left SAYD module over the Lie algebra $\Fg$.
\end{corollary}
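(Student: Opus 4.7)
The plan is to assemble the corollary by chaining together the structural results already proved in the paper, with no new computation required. Let $V$ be a right module over the Weyl algebra $D(\Fg)$. First, the corollary immediately preceding this one (via the well-defined algebra map $\Phi \colon \widetilde{D}(\Fg) \to D(\Fg)$) promotes the $D(\Fg)$-module structure on $V$ to a right module structure over $\widetilde{D}(\Fg) = S(\Fg^\ast) \al U(\Fg)$. By the isomorphism $\widetilde{D}(\Fg) \cong U(\widetilde\Fg)$ established in the Proposition in Section~3.2 and by Corollary~\ref{module-cross}, this is equivalent to saying that $V$ carries compatible right $\Fg$- and right $S(\Fg^\ast)$-module structures satisfying \eqref{3}, i.e.\ $V$ is a right $\widetilde\Fg$-module.

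Second, I would invoke the Lemma in Section~3.3 which establishes that every right $D(\Fg)$-module is unimodular stable, $\sum_i (v \cdot X_i) \lhd \theta^i = 0$. Together with the previous step, this says $V$ is a \emph{unimodular stable} right $\widetilde\Fg$-module. Applying Proposition~\ref{12} then transfers this data to a \emph{stable} right $\widetilde\Fg$-module structure on $V \otimes \Cb_{\delta}$, where the $\Fg$-action is twisted by the modular character $\delta$.

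Finally, Proposition~\ref{23} converts a stable right $\widetilde\Fg$-module into a right-left SAYD module over the Lie algebra $\Fg$: the $S(\Fg^\ast)$-action corresponds, via Proposition~\ref{mod-comod}, to a left $\Fg$-coaction $\Db_{\Fg} \colon V \to \Fg \otimes V$, and the AYD and stability compatibilities are exactly the two conditions verified in the proof of Proposition~\ref{23}. If one wishes to pass further to a left $U(\Fg)$-SAYD structure, one checks that the $\Fg$-coaction coming from $S(\Fg^\ast)$ (and its truncations built into $D(\Fg)$-modules through the $Q^k$) is locally conilpotent, so Proposition~\ref{U(g)-coaction} together with the two propositions on AYD and stability transfer between $\Fg$ and $U(\Fg)$ supplies the enveloping-algebra coaction.

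The whole argument is essentially bookkeeping; there is no genuine obstacle beyond keeping track of the $\delta$-twist when moving from \emph{unimodular stable} to \emph{stable}. The one point worth being careful about is whether the corollary is intended to assert SAYD for $V$ itself or for $V \otimes \Cb_{\delta}$: in the unimodular case ($\delta = 0$) these coincide, and in general the statement should be read modulo this canonical twist provided by Proposition~\ref{12}.
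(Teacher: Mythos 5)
Your chain is exactly the one the paper intends (it states the corollary without proof as the assembly of the Section 3.3 lemmas, the isomorphism $\widetilde{D}(\Fg)\cong U(\widetilde\Fg)$ with Corollary \ref{module-cross}, Proposition \ref{12}, and Proposition \ref{23}), so the proposal is correct and follows the same route. Your closing caveat is also well taken and is in fact needed for the literal statement: for a right $D(\Fg)$-module one only gets unimodular stability, and $\sum_i (v\lhd\theta^i)\cdot X_i=-\sum_k\delta(X_k)\,v\lhd\theta^k$ need not vanish when $\Fg$ is not unimodular, so strictly it is $V\ot\Cb_{\delta}$ (equivalently $V$ itself when $\delta=0$) that becomes a right-left SAYD module over $\Fg$ --- a point the paper glosses over.
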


Finally, we state an analogous of Lemma 2.3 \cite{HajaKhalRangSomm04-I} to show that the category of $^\Fg\mathcal{AYD}_\Fg$ is monoidal.

\begin{proposition}
Let $M$ and $N$ be two right-left AYD modules over $\Fg$. Then $M \ot N$ is also a right-left AYD over $\Fg$ via
the coaction
\begin{equation}
\Db_{\Fg}:M \ot N \to \Fg \ot M \ot N, \quad m \ot n \mapsto m\nsb{-1} \ot m\nsb{0} \ot n + n\nsb{-1} \ot m \ot n\nsb{0}
\end{equation}
and the action
\begin{equation}
M \ot N \ot \Fg \to M \ot N, \quad (m \ot n) \cdot X = m \cdot X \ot n + m \ot n \cdot X
\end{equation}
\end{proposition}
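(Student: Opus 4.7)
The plan is to verify three things in sequence: (i) that the given map $\Db_{\Fg}\colon M\ot N\to \Fg\ot M\ot N$ genuinely defines a $\Fg$-comodule structure, (ii) that the diagonal formula gives a $\Fg$-module structure, and (iii) that the AYD compatibility holds. Step (ii) is immediate from the fact that the diagonal action on a tensor product of Lie algebra modules is always a module action, so the entire substance of the proof lies in (i) and (iii).

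For (i), write $v=m\ot n$ and apply $\Db_{\Fg}$ a second time to the $V$-factor. The output splits into four summands: two ``pure'' terms $m\nsb{-2}\ot m\nsb{-1}\ot m\nsb{0}\ot n$ and $n\nsb{-2}\ot n\nsb{-1}\ot m\ot n\nsb{0}$, and two ``mixed'' terms $m\nsb{-1}\ot n\nsb{-1}\ot m\nsb{0}\ot n\nsb{0}$ and $n\nsb{-1}\ot m\nsb{-1}\ot m\nsb{0}\ot n\nsb{0}$. After wedging the first two Lie-algebra factors, the two pure terms vanish by the comodule axiom \eqref{g-comod} applied to $M$ and $N$ individually, while the two mixed terms cancel because $m\nsb{-1}\wg n\nsb{-1}=-n\nsb{-1}\wg m\nsb{-1}$. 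This establishes $v\nsb{-2}\wg v\nsb{-1}\ot v\nsb{0}=0$.

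For (iii), I would expand both sides of
\begin{equation*}
\Db_{\Fg}\bigl((m\ot n)\cdot X\bigr)=(m\ot n)\nsb{-1}\ot (m\ot n)\nsb{0}\cdot X+[(m\ot n)\nsb{-1},X]\ot (m\ot n)\nsb{0}
\end{equation*}
directly. On the left, using the diagonal action and the two individual AYD compatibilities
\begin{align*}
\Db_{\Fg}(m\cdot X)&=m\nsb{-1}\ot m\nsb{0}\cdot X+[m\nsb{-1},X]\ot m\nsb{0},\\
\Db_{\Fg}(n\cdot X)&=n\nsb{-1}\ot n\nsb{0}\cdot X+[n\nsb{-1},X]\ot n\nsb{0},
\end{align*}
produces six terms. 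On the right, plugging in the definition of $\Db_{\Fg}$ on $M\ot N$ and the diagonal action produces the same six terms, with the two commutator terms arising from the bracket factor and the remaining four from the transport of $X$ across the diagonal action. A term-by-term matching closes the argument.

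The main obstacle is purely bookkeeping: keeping track of which tensor slot the coaction lands in and being careful about where $X$ acts. Once the two AYD relations for $M$ and $N$ are expanded and the diagonal action is substituted, the verification is algebraically forced, and the antisymmetry trick in step (i) is the only place where any nontrivial observation is needed.
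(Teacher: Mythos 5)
Your proposal is correct and follows essentially the same route as the paper: the AYD compatibility is checked by the same direct term-by-term expansion, using the two individual AYD relations and the diagonal action, and the six resulting terms match. The only difference is that you also verify explicitly that the formula defines a $\Fg$-comodule (the pure terms vanishing by the axiom \eqref{g-comod} for $M$ and $N$, the mixed terms cancelling by antisymmetry of the wedge), a point the paper's proof leaves implicit; this is a welcome, if minor, addition rather than a different argument.
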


\begin{proof}

We simply verify that
\begin{align}
\begin{split}
& [(m \ot n)\nsb{-1},X] \ot (m \ot n)\nsb{0} + (m \ot n)\nsb{-1} \ot (m \ot n)\nsb{0} \cdot X = \\
& [m\nsb{-1},X] \ot m\nsb{0} \ot n + [n\nsb{-1},X] \ot m \ot n\nsb{0} + \\
& m\nsb{-1} \ot (m\nsb{0} \ot n) \cdot X + n\nsb{-1} \ot (m \ot n\nsb{0}) \cdot X = \\
& (m \cdot X)\nsb{-1} \ot (m \cdot X)\nsb{0} \ot n + n\nsb{-1} \ot m \cdot X \ot n\nsb{0} + \\
& m\nsb{-1} \ot m\nsb{0} \ot n\cdot X + (n\cdot X)\nsb{-1} \ot m \ot (n \cdot X)\nsb{0} = \\
& \Db_{\Fg}(m \cdot X \ot n + m \ot n \cdot X) = \Db_{\Fg}((m \ot n) \cdot X).
\end{split}
\end{align}
\end{proof}

\subsection{Examples}

This subsection is devoted to examples to illustrate the notion of SAYD module over a Lie algebra. We consider the representations and corepresentations of a Lie algebra $\Fg$ on a finite dimensional vector space $V$ in terms of matrices. We then investigate the SAYD condition as a relation between these matrices and the Lie algebra structure of $\Fg$.

 Let also $V$ be a $n$ dimensional $\Fg$-module with a basis $\{v^1, \cdots, v^n\}$. We  express the module structure as
\begin{equation}
m^i \cdot X_j = \beta^i_{jk}m^k, \quad \beta^i_{jk} \in \Cb.
\end{equation}
In this way, for any basis element $X_j \in \Fg$ we obtain a matrix $B_j \in M_n(\mathbb{C})$ such that
\begin{equation}
(B_j)^i_k := \beta^i_{jk}.
\end{equation}
Let $\Db_{\Fg}:V \to \Fg \ot V$ be a coaction. We write  the coaction as
\begin{equation}
\Db_{\Fg}(v^i) = \alpha^{ij}_k X_j \ot v^k, \quad \alpha^{ij}_k\in \Cb.
\end{equation}
This way  we get a matrix $A^j \in M_n(\mathbb{C})$ for any basis element $X_j \in \Fg$ such that
\begin{equation}
(A^j)^i_k := \alpha^{ij}_k.
\end{equation}

\begin{lemma}
Linear map $\Db_{\Fg}:M \to \Fg \ot M$ forms a right $\Fg$-comodule if and only if
\begin{equation}
A^{j_1} \cdot A^{j_2} = A^{j_2} \cdot A^{j_1}.
\end{equation}
\end{lemma}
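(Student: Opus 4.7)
The plan is to translate the abstract comodule axiom $v\nsb{-2}\wg v\nsb{-1}\ot v\nsb{0}=0$ from the definition of a $\Fg$-comodule into a relation between the matrices $A^j\in M_n(\Cb)$. Concretely, I would compute the iterated coaction $(\Id\ot \Db_\Fg)\circ \Db_\Fg$ applied to a basis element $v^i$, read off the resulting tensor $v^i\nsb{-2}\ot v^i\nsb{-1}\ot v^i\nsb{0}$ in matrix form, and then impose the wedge-vanishing condition.

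Explicitly, starting from $\Db_\Fg(v^i)=\alpha^{ij}_kX_j\ot v^k$ one obtains
\begin{equation*}
v^i\nsb{-2}\ot v^i\nsb{-1}\ot v^i\nsb{0}\;=\;\alpha^{ij_1}_k\,\alpha^{kj_2}_l\,X_{j_1}\ot X_{j_2}\ot v^l\;=\;(A^{j_1}A^{j_2})^i_l\,X_{j_1}\ot X_{j_2}\ot v^l,
\end{equation*}
so that the condition $v^i\nsb{-2}\wg v^i\nsb{-1}\ot v^i\nsb{0}=0$ becomes
\begin{equation*}
(A^{j_1}A^{j_2})^i_l\,X_{j_1}\wg X_{j_2}\ot v^l\;=\;0.
\end{equation*}
Using antisymmetry of the wedge product, grouping the terms for $j_1<j_2$, and using the linear independence of $\{X_{j_1}\wg X_{j_2}\ot v^l\}$ in $\wg^2\Fg\ot V$, this is equivalent to $(A^{j_1}A^{j_2})^i_l=(A^{j_2}A^{j_1})^i_l$ for all indices, i.e.\ $A^{j_1}\cdot A^{j_2}=A^{j_2}\cdot A^{j_1}$.

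The converse is just reading the same chain of equalities backwards: if the matrices commute pairwise then the tensor $(A^{j_1}A^{j_2})^i_l X_{j_1}\ot X_{j_2}\ot v^l$ is symmetric in $(j_1,j_2)$, hence antisymmetrizes to zero, which is exactly the comodule axiom. There is no real obstacle here; the whole argument is a routine unwinding of index notation. The only point worth highlighting is that this calculation is a concrete manifestation of the fact that Proposition \ref{mod-comod} identifies $\Fg$-comodules with $S(\Fg^*)$-modules, and the commutativity of the $A^j$ matches the commutativity of the $\t^j$ in $S(\Fg^*)$ under the dictionary $v\lhd \t^j = (A^j)^i_k v^k$ (for $v=v^i$ in an obvious shorthand).
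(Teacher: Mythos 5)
Your proposal is correct and follows essentially the same route as the paper, whose proof is exactly the stated translation of the compatibility $v\nsb{-2}\wg v\nsb{-1}\ot v\nsb{0}=0$ into the matrix identity $(A^{j_1}A^{j_2})^i_l=(A^{j_2}A^{j_1})^i_l$; you have merely written out the index computation that the paper leaves implicit.
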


\begin{proof}
It is just the translation of the coaction compatibility $v^i\nsb{-2} \wedge v^i\nsb{-1} \ot v^i\nsb{0} = 0$  in terms of the matrices $A^i$.
\end{proof}

\begin{lemma}
Right $\Fg$-module left $\Fg$-comodule $V$ is stable if and only if
\begin{equation}\label{aux-4}
\sum_j A^j \cdot B_j = 0.
\end{equation}
\end{lemma}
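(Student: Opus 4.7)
The plan is to reduce the stability condition $v\nsb{0}\cdot v\nsb{-1}=0$ to a matrix identity by evaluating on a basis and expanding everything in terms of the structure constants $\alpha^{ij}_k$ and $\beta^i_{jk}$ that define the coaction and action. Since both sides of the claimed equivalence are linear in $v$, it suffices to test stability on each basis vector $v^i$, which will produce one scalar identity per pair $(i,l)$ of basis indices; that family of identities should assemble exactly into the single matrix identity $\sum_j A^j\cdot B_j=0$.

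First I would substitute $\Db_{\Fg}(v^i)=\alpha^{ij}_k\, X_j\ot v^k$ into $v^i\nsb{0}\cdot v^i\nsb{-1}$, obtaining
\begin{equation*}
v^i\nsb{0}\cdot v^i\nsb{-1}\;=\;\alpha^{ij}_k\,(v^k\cdot X_j)\;=\;\alpha^{ij}_k\,\beta^k_{jl}\,v^l.
\end{equation*}
Next I would read off the coefficient of $v^l$ and rewrite it using the matrix conventions $(A^j)^i_k=\alpha^{ij}_k$ and $(B_j)^k_l=\beta^k_{jl}$: the coefficient is $\sum_{j,k}(A^j)^i_k(B_j)^k_l$, which is precisely the $(i,l)$-entry of $\sum_j A^j\cdot B_j$. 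Hence $v^i\nsb{0}\cdot v^i\nsb{-1}=0$ for every $i$ if and only if every entry of $\sum_j A^j\cdot B_j$ vanishes, i.e.\ if and only if \eqref{aux-4} holds.

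Both implications then follow from the same calculation read in opposite directions, so no separate converse argument is needed. There is no real obstacle here; the only thing to watch is the index placement, in particular the distinction between the upper index $j$ of $A^j$ (which labels the Lie algebra basis and is summed against $B_j$) and the pair of matrix indices $(i,k)$, so that the contraction in $A^j\cdot B_j$ matches the contraction in $\alpha^{ij}_k\beta^k_{jl}$ produced by first coacting and then acting.
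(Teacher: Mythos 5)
Your proposal is correct and is essentially the paper's own argument: expand $v^i\nsb{0}\cdot v^i\nsb{-1}=\alpha^{ij}_k\beta^k_{jl}v^l$ on a basis and identify the coefficient of $v^l$ with the $(i,l)$-entry of $\sum_j A^j\cdot B_j$, with both directions following from the same computation. No gaps; the index bookkeeping matches the paper's conventions $(A^j)^i_k=\alpha^{ij}_k$, $(B_j)^k_l=\beta^k_{jl}$.
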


\begin{proof}
By the definition of the stability,
\begin{equation}
v^i\nsb{0} \cdot v^i\nsb{-1} = \alpha^{ij}_{k} v^k \cdot X_j = \alpha^{ij}_{k}\beta^k_{jl} v^l = 0
\end{equation}
Therefore,
\begin{equation}
\alpha^{ij}_{k}\beta^k_{jl} = (A^j)^i_k (B_j)^k_l = (A^j \cdot B_j)^i_l = 0.
\end{equation}
\end{proof}

We proceed to express the AYD condition.

\begin{lemma}
The $\Fg$-module-comodule $V$ is a right-left AYD  if and only if
\begin{equation}\label{aux-2}
[B_q, A^j] = \sum_s A^sC^j_{sq}.
\end{equation}
\end{lemma}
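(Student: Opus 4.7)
The plan is to directly translate the AYD compatibility condition
$$\Db_{\Fg}(v \cdot X) = v\nsb{-1} \ot v\nsb{0} \cdot X + [v\nsb{-1}, X] \ot v\nsb{0}$$
into the matrix form dictated by the basis $\{v^i\}$ and the structure constants $\beta^i_{jk}$, $\alpha^{ij}_k$, $C^j_{sq}$. Since everything is linear in $v$ and $X$, it suffices to check the equality for $v = v^i$ and $X = X_q$, and to read off the coefficient of each basis vector $X_j \ot v^l$ on both sides.

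First, I would compute the left-hand side: using $v^i \cdot X_q = \beta^i_{qk}v^k$ and then applying the coaction, one gets $\Db_{\Fg}(v^i \cdot X_q) = \beta^i_{qk}\alpha^{kj}_l X_j \ot v^l$, whose coefficient of $X_j \ot v^l$ is $(B_q \cdot A^j)^i_l$. Next I would compute the two summands on the right-hand side: the term $v\nsb{-1} \ot v\nsb{0} \cdot X_q$ produces $\alpha^{ij}_k \beta^k_{ql} X_j \ot v^l$, contributing $(A^j \cdot B_q)^i_l$, while the term $[v\nsb{-1},X_q] \ot v\nsb{0}$ produces $\alpha^{ij}_k C^s_{jq} X_s \ot v^k$, which after renaming the dummy indices becomes $C^j_{sq}(A^s)^i_l$ as the coefficient of $X_j \ot v^l$.

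Equating coefficients then yields $(B_q A^j)^i_l = (A^j B_q)^i_l + C^j_{sq}(A^s)^i_l$ for all $i,l$, which is exactly $[B_q, A^j] = \sum_s A^s C^j_{sq}$; conversely, if this matrix identity holds, reversing the calculation shows that the AYD compatibility is satisfied on every basis element and hence on all of $V$ by linearity. There is no real obstacle here: the only subtlety is the bookkeeping of indices when relabeling the dummy index $s \leftrightarrow j$ in the bracket term, so I would be careful to state clearly that the Lie bracket $[X_j,X_q] = C^s_{jq} X_s$ is rewritten as $C^j_{sq}X_j$ after interchanging $j$ and $s$, so that the free index $j$ matches the one indexing the matrices $A^j$ on the other side of the equation.
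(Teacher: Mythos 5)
Your proposal is correct and follows essentially the same route as the paper: both compute $\Db_{\Fg}(v^i\cdot X_q)$ and the two terms $v\nsb{-1}\ot v\nsb{0}\cdot X_q$ and $[v\nsb{-1},X_q]\ot v\nsb{0}$ on basis elements, then compare coefficients of $X_j\ot v^l$ to obtain $(B_qA^j)^i_l=(A^jB_q)^i_l+\sum_s C^j_{sq}(A^s)^i_l$, i.e.\ $[B_q,A^j]=\sum_s A^sC^j_{sq}$. Your explicit handling of the dummy-index relabeling in the bracket term and your remark on the converse by linearity are consistent with the paper's (more terse) computation.
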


\begin{proof}
We first observe
\begin{align}
\begin{split}
& \Db_{\Fg}(v^p \cdot X_q) = \Db_{\Fg}(\beta^p_{qk}v^k) = \beta^p_{qk}\alpha^{kj}_l X_j \ot v^l \\
& = (B_q)^p_k(A^j)^k_l X_j \ot v^l = (B_q \cdot A^j)^p_l X_j \ot v^l.
\end{split}
\end{align}
On the other hand, writing $\Db_{\Fg}(v^p) = \alpha^{pj}_{l} X_j \ot v^l$,
\begin{align}
\begin{split}
& [v^p\nsb{-1},X_q] \ot v^p\nsb{0} + v^p\nsb{-1} \ot v^p\nsb{0} \cdot X_q = \alpha^{ps}_{l}[X_s,X_q] \ot v^l + \alpha^{pj}_{t} X_j \ot v^t \cdot X_q \\
& = \alpha^{ps}_{l}C^j_{sq}X_j \ot v^l + \alpha^{pj}_{t}\beta^t_{ql} X_j \ot v^l = (\alpha^{ps}_{l}C^j_{sq} + (A^j \cdot B_q)^p_l)X_j \ot v^l.
\end{split}
\end{align}
\end{proof}

\begin{remark}\rm{
The stability and the AYD conditions are independent of the choice of basis. Let $\{Y_j\}$ be another basis with
\begin{equation}
Y_j = \gamma^l_jX_l, \qquad X_j = (\gamma^{-1})^l_jY_l.
\end{equation}
Hence, the action and coaction matrices are
\begin{equation}
\widetilde{B}_q = \gamma^l_qB_l, \quad \widetilde{A}^j = A^l(\gamma^{-1})^j_l,
\end{equation}
respectively. Therefore,
\begin{equation}
\sum_j \widetilde{A}^j \cdot \widetilde{B}_j = \sum_{j,l,s} A^l(\gamma^{-1})^j_l\gamma^s_jB_s = \sum_{l,s} A^lB_s\delta^l_s = \sum_j A^j \cdot B_j = 0,
\end{equation}
proving that the stability is independent of the choice of basis. Secondly, we have
\begin{equation}
[\widetilde{B}_q,\widetilde{A}^j] = \gamma^s_q(\gamma^{-1})^j_r[B_s,A^r] = \gamma^s_q(\gamma^{-1})^j_rA^lC^r_{ls}.
\end{equation}
If we write $[Y_p,Y_q] = \widetilde{C}^r_{pq}Y_r$, then it is immediate to observe that
\begin{equation}
\gamma^s_qC^r_{ls}(\gamma^{-1})^j_r = (\gamma^{-1})^s_l\widetilde{C}^j_{sq}.
\end{equation}
Therefore,
\begin{equation}
[\widetilde{B}_q,\widetilde{A}^j] = A^l\gamma^s_qC^r_{ls}(\gamma^{-1})^j_r = A^l(\gamma^{-1})^s_l\widetilde{C}^j_{sq} = \widetilde{A}^s\widetilde{C}^j_{sq}.
\end{equation}
This observation proves that the AYD condition is independent of the choice of basis.
}\end{remark}

Next, considering the Lie algebra $\Fs\Fl(2)$, we determine the SAYD modules over simple $\Fs\Fl(2)$-modules. First of all, we fix a basis of $\Fs\Fl(2)$ as follows.
\begin{equation}
X_1 = \left(
        \begin{array}{cc}
          0 & 1 \\
          0 & 0 \\
        \end{array}
      \right), \qquad X_2 = \left(
        \begin{array}{cc}
          0 & 0 \\
          1 & 0 \\
        \end{array}
      \right), \qquad X_3 = \left(
        \begin{array}{cc}
          1 & 0 \\
          0 & -1 \\
        \end{array}
      \right).
\end{equation}

\begin{example}\rm{
Let $V = <\{v^1,v^2\}>$ be a two dimensional simple $\Fs\Fl(2)$-module. Then, by  \cite{Kass}, the representation
\begin{align}
\rho:\Fs\Fl(2) \to \Fg\Fl(V)
\end{align}
is the inclusion $\rho:\Fs\Fl(2) \hookrightarrow \Fg\Fl(2)$. Therefore, we have
\begin{equation}
B_1 = \left(
        \begin{array}{cc}
          0 & 0 \\
          1 & 0 \\
        \end{array}
      \right), \quad B_2 = \left(
        \begin{array}{cc}
          0 & 1 \\
          0 & 0 \\
        \end{array}
      \right), \quad B_3 = \left(
        \begin{array}{cc}
          1 & 0 \\
          0 & -1 \\
        \end{array}
      \right).
\end{equation}

We want to find
\begin{align}
A^1 = \left(
        \begin{array}{cc}
          x^1_1 & x^1_2 \\
          x^2_1 & x^2_2 \\
        \end{array}
      \right), \qquad A^2 = \left(
        \begin{array}{cc}
          y^1_1 & y^1_2 \\
          y^2_1 & y^2_2 \\
        \end{array}
      \right), \qquad A^3 = \left(
        \begin{array}{cc}
          z^1_1 & z^1_2 \\
          z^2_1 & z^2_2 \\
        \end{array}
      \right),
\end{align}
such that together with the $\Fg$-coaction $\Db_{\Fs\Fl(2)}:V \to \Fs\Fl(2) \ot V$, defined as $v^i \mapsto (A^j)^i_k X_j \ot v^k$, $V$ becomes a right-left SAYD over $\Fs\Fl(2)$. We first express the stability condition. To this end,
\begin{align}
A^1 \cdot B_1 = \left(
        \begin{array}{cc}
          x^1_2 & 0 \\
          x^2_2 & 0 \\
        \end{array}
      \right), \qquad A^2 \cdot B_2 = \left(
        \begin{array}{cc}
          0 & y^1_1 \\
          0 & y^2_1 \\
        \end{array}
      \right), \qquad A^3 \cdot B_3 = \left(
        \begin{array}{cc}
          z^1_1 & -z^1_2 \\
          z^2_1 & -z^2_2 \\
        \end{array}
      \right),
\end{align}
and hence, the stability is
\begin{align}
\sum_j A^j \cdot B_j = \left(
                         \begin{array}{cc}
                           x^1_2 + z^1_1 & y^1_1 - z^1_2 \\
                           x^2_2 + z^2_1 & y^2_1 - z^2_2 \\
                         \end{array}
                       \right) = 0.
\end{align}
Next, we consider the AYD condition
\begin{equation}
[B_q, A^j] = \sum_s A^sC^j_{sq}.
\end{equation}
For $j = 1 = q$,
\begin{equation}
A^1 = \left(
        \begin{array}{cc}
          x^1_1 & 0 \\
          x^2_1 & x^2_2 \\
        \end{array}
      \right), \qquad A^2 = \left(
        \begin{array}{cc}
          0 & y^1_2 \\
          0 & y^2_2 \\
        \end{array}
      \right), \qquad A^3 = \left(
        \begin{array}{cc}
          0 & 0 \\
          z^2_1 & 0 \\
        \end{array}
      \right).
\end{equation}
Similarly, for $q = 2$ and $j = 1$, we arrive
\begin{equation}
A^1 = \left(
        \begin{array}{cc}
          0 & 0 \\
          0 & 0 \\
        \end{array}
      \right), \qquad A^2 = \left(
        \begin{array}{cc}
          0 & y^1_2 \\
          0 & y^2_2 \\
        \end{array}
      \right), \qquad A^3 = \left(
        \begin{array}{cc}
          0 & 0 \\
          0 & 0 \\
        \end{array}
      \right).
\end{equation}
Finally, for $j = 1$ and $q=2$ we conclude
\begin{equation}
A^1 = \left(
        \begin{array}{cc}
          0 & 0 \\
          0 & 0 \\
        \end{array}
      \right), \qquad A^2 = \left(
        \begin{array}{cc}
          0 & 0 \\
          0 & 0 \\
        \end{array}
      \right), \qquad A^3 = \left(
        \begin{array}{cc}
          0 & 0 \\
          0 & 0 \\
        \end{array}
      \right).
\end{equation}
Thus, the only $\Fs\Fl(2)$-comodule structure that makes a 2-dimensional simple $\Fs\Fl(2)$-module $V$ to be a right-left SAYD over $\Fs\Fl(2)$ is the trivial comodule structure.
}\end{example}

\begin{example}\label{aux-62}
{\rm
We investigate all possible coactions that make the truncated symmetric algebra $S(\Fs\Fl(2)^*)_{[2]}$  an SAYD module over  $\Fs\Fl(2)$.

A vector space basis  of $S(\Fs\Fl(2)^*)_{[2]}$ is $\{1 =\theta^0, \theta^1, \theta^2, \theta^3\}$ and the Kozsul coaction is
\begin{align}
\begin{split}
& S(\Fs\Fl(2)^*)_{[2]} \to \Fs\Fl(2) \ot S(\Fs\Fl(2)^*)_{[2]} \\
& \theta^0 \mapsto X_1 \ot \theta^1 + X_2 \ot \theta^2 + X_3 \ot \theta^3 \\
& \theta^i \mapsto 0 ,\qquad i = 1,2,3
\end{split}
\end{align}
We first determine the right $\Fs\Fl(2)$ action to find the matrices $B_1, B_2, B_3$.  We have
\begin{align}
\theta^i \lhd X_j (X_q) = \theta^i \cdot X_j (X_q) = \theta^i([X_j, X_q]).
\end{align}
Therefore,
\begin{align}
B_1 = \left(
                    \begin{array}{cccc}
                      0 & 0 & 0 & 0 \\
                      0 & 0 & 0 & -2 \\
                      0 & 0 & 0 & 0 \\
                      0 & 0 & 1 & 0 \\
                    \end{array}
                  \right), \quad B_2 = \left(
                    \begin{array}{cccc}
                      0 & 0 & 0 & 0 \\
                      0 & 0 & 0 & 0 \\
                      0 & 0 & 0 & 2 \\
                      0 & -1 & 0 & 0 \\
                    \end{array}
                  \right), \quad B_3 = \left(
                    \begin{array}{cccc}
                      0 & 0 & 0 & 0 \\
                      0 & 2 & 0 & 0 \\
                      0 & 0 & -2 & 0 \\
                      0 & 0 & 0 & 0 \\
                    \end{array}
                  \right)
\end{align}
Let  $A^1 = (x^i_k), A^2 = (y^i_k), A^3 = (z^i_k)$ represent the $\Fg$-coaction on $V$. According to the above expression of $B_1, B_2, B_3$, the stability is
\begin{align}
\sum_j A^j \cdot B_j = \left(
                    \begin{array}{cccc}
                      0 & y^0_3 + 2z^0_1 & x^0_3 - 2z^0_2 & -2x^0_1 + 2y^0_2 \\
                      0 & y^1_3 + 2z^1_1 & x^1_3 - 2z^1_2 & -2x^1_1 + 2y^1_2 \\
                      0 & y^2_3 + 2z^2_1 & x^2_3 - 2z^2_2 & -2x^2_1 + 2y^2_2 \\
                      0 & y^3_3 + 2z^3_1 & x^3_3 - 2z^3_2 & -2x^3_1 + 2y^3_2 \\
                    \end{array}
                  \right) = 0.
\end{align}
As before, we make the following observations. First,
\begin{align}
[B_1, A^1] = \left(
                    \begin{array}{cccc}
                      0 & 0 & -x^0_3 & 2x^0_1 \\
                      -2x^3_0 & -2x^3_1 & -2x^3_2 -x^1_3 & -2x^3_3 + 2x^1_1 \\
                      0 & 0 & -x^2_3 & 2x^2_1 \\
                      x^2_0 & x^2_1 & x^2_2 - x^3_3 & x^2_3 - 2x^3_1 \\
                    \end{array}
                  \right) = 2A^3
\end{align}
and next
\begin{align}
[B_2, A^1] = \left(
                    \begin{array}{cccc}
                      0 & x^0_3 & 0 & -2x^0_2 \\
                      0 & x^1_3 & 0 & -2x^1_2 \\
                      2x^3_0 & 2x^3_1 + x^2_3 & 2x^3_2 & 2x^3_3 - 2x^2_2 \\
                      -x^1_0 & -x^1_1 + x^3_3 & -x^1_2 & -x^1_3 - 2x^3_2 \\
                    \end{array}
                  \right) = 0
\end{align}
Finally,
\begin{align}
[B_3, A^1] = \left(
                    \begin{array}{cccc}
                      0 & -2x^0_1 & 0 & 0 \\
                      0 & 0 & 0 & 0 \\
                      -2x^2_0 & -4x^2_1 & 0 & -2x^2_3 \\
                      0 & -2x^3_1 & 0 & 0 \\
                    \end{array}
                  \right) = -2A^1.
\end{align}
Hence, together with the stability one gets
\begin{align}
A^1 = \left(
                    \begin{array}{cccc}
                      0 & x^0_1 & 0 & 0 \\
                      0 & 0 & 0 & 0 \\
                      x^2_0 & 0 & 0 & 0 \\
                      0 & 0 & 0 & 0 \\
                    \end{array}
                  \right)
\end{align}
and
\begin{align}
[B_1, A^1] = \left(
                    \begin{array}{cccc}
                      0 & 0 & 0 & 2x^0_1 \\
                      0 & 0 & 0 & 0 \\
                      0 & 0 & 0 & 0 \\
                      x^2_0 & 0 & 0 & 0 \\
                    \end{array}
                  \right) = 2A^3.
\end{align}
Similarly one computes
\begin{align}
[B_1, A^2] = \left(
                    \begin{array}{cccc}
                      0 & 0 & 0 & 2y^0_1 \\
                      -2y^3_0 & -2y^3_1 & 0 & 2y^1_1 \\
                      0 & 0 & 0 & 2y^2_1 \\
                      y^2_0 & y^2_1 & 0 & 2y^3_1 \\
                    \end{array}
                  \right) = 0,
\end{align}
as well as
\begin{align}
[B_2, A^2] = \left(
                    \begin{array}{cccc}
                      0 & 0 & 0 & -2y^0_2 \\
                      0 & 0 & 0 & 0 \\
                      0 & 0 & 0 & 0 \\
                      -y^1_0 &  & 0 & 0 \\
                    \end{array}
                  \right) = -2A^3,
\end{align}
and $[B_3, A^2] = 2A^2$. We conclude that
\begin{align}
A^1 = \left(
                    \begin{array}{cccc}
                      0 & c & 0 & 0 \\
                      0 & 0 & 0 & 0 \\
                      d & 0 & 0 & 0 \\
                      0 & 0 & 0 & 0 \\
                    \end{array}
                  \right), \mbox{  } A^2 = \left(
                    \begin{array}{cccc}
                      0 & 0 & c & 0 \\
                      d & 0 & 0 & 0 \\
                      0 & 0 & 0 & 0 \\
                      0 & 0 & 0 & 0 \\
                    \end{array}
                  \right), \mbox{  } A^3 = \left(
                    \begin{array}{cccc}
                      0 & 0 & 0 & c \\
                      0 & 0 & 0 & 0 \\
                      0 & 0 & 0 & 0 \\
                      \frac{1}{2} d & 0 & 0 & 0 \\
                    \end{array}
                  \right)
\end{align}
One notes that for $c = 1, d = 0$ one recovers  the Kozsul coaction, but obviously it is not the only choice.
}\end{example}

\section{Cyclic cohomology of Lie algebras}
In this section we show that for $V$, a SAYD module over a Lie algebra $\Fg$, the (periodic) cyclic cohomology of $\Fg$ with coefficients in $V$ and the (periodic) cyclic cohomology of the enveloping Hopf algebra $U(\Fg)$ with coefficient in the corresponding SAYD over $U(\Fg)$ are isomorphic.

As a result of Proposition \ref{7} and Proposition \ref{23}, we have the following definition.

\begin{definition}
Let $\Fg$ be a Lie algebra and $V$ be a right-left SAYD module over $\Fg$. We call the cohomology of the  total  complex of  $(C^{\bullet}(\Fg,V), \p_{\rm CE} + b_{\rm K})$ the  cyclic   cohomology of the Lie algebra $\Fg$ with coefficients in the SAYD module $V$, and denote it by  $HC^{\bullet}(\Fg,V)$. Similarly we denote its periodic cyclic cohomology by $HP^{\bullet}(\Fg,V)$.
\end{definition}

Our main result in this section is an analogous of Proposition 7 of \cite{ConnMosc98}.

\begin{theorem}\label{g-U(g) spectral sequence}
Let $\Fg$ be a Lie algebra and $V$ be a SAYD module over the Lie algebra $\Fg$. Then the periodic cyclic homology of $\Fg$ with coefficients in  $V$ is the same as the periodic cyclic cohomology of $U(\Fg)$ with coefficients in the corresponding SAYD module $V$ over $U(\Fg)$. In short,
\begin{align}
HC^\bullet(\Fg, V) \cong HC^\bullet(U(\Fg), V)
\end{align}
\end{theorem}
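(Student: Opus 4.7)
The plan is to build an explicit quasi-isomorphism from $(\wedge^\bullet \Fg \ot V, \partial_{\rm CE} + \partial_{\rm K})$ to the Hopf cyclic complex $(V \ot U(\Fg)^{\ot \bullet}, b + B)$ generalizing the antisymmetrization map of Connes-Moscovici from the trivial-coefficient case \cite{ConnMosc98}. Concretely, I would define
$$
\alpha_n(Y_1 \wedge \cdots \wedge Y_n \ot v) := v \ot \sum_{\sigma \in S_n} \mathrm{sgn}(\sigma)\, Y_{\sigma(1)} \ot \cdots \ot Y_{\sigma(n)},
$$
where the $U(\Fg)$-coaction on $V$ needed to make the target cocyclic is the one reconstructed from the given $\Fg$-coaction via Proposition \ref{U(g)-coaction}, and the SAYD compatibility on the $U(\Fg)$-side is provided by Proposition \ref{23}. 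The goal is to show that $\alpha$ intertwines $\partial_{\rm K}$ with $b$ and $\partial_{\rm CE}$ with $B$ modulo homotopy, and then to conclude that $\alpha$ induces isomorphisms on both cyclic and periodic cyclic cohomology.

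For the Hochschild compatibility, I would compute $b(\alpha_n(Y_1 \wedge \cdots \wedge Y_n \ot v))$ face by face. The interior faces $\partial_i$, $1 \leq i \leq n$, insert $\Delta(Y_i) = Y_i \ot 1 + 1 \ot Y_i$ into the antisymmetrized string and cancel pairwise by the sign of the adjacent transposition; the boundary face $\partial_0$ inserts a unit on the left. Using the explicit expansion
$$
v\snsb{-1} \ot v\snsb{0} = 1 \ot v + \sum_{k \geq 1} \theta_k^{-1}(v\nsb{-k} \cdots v\nsb{-1}) \ot v\nsb{0}
$$
for the $U(\Fg)$-coaction, the $1 \ot v$ part of $\partial_{n+1}$ cancels the $\partial_0$ contribution, the degree-one piece reads $v\nsb{-1} \ot \alpha_n(Y_1 \wedge \cdots \wedge Y_n \ot v\nsb{0})$, which is precisely $\alpha_{n+1}(\partial_{\rm K}(Y_1 \wedge \cdots \wedge Y_n \ot v))$, and the higher-degree contributions turn out to be $b$-boundaries of lower simplicial degree. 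A parallel computation using the cyclic operator $\tau$ together with the SAYD relation from Proposition \ref{23} establishes $B \circ \alpha = \alpha \circ \partial_{\rm CE}$, so $\alpha$ descends to a well-defined map of the two mixed complexes.

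To upgrade this to a quasi-isomorphism I would filter the Hopf cyclic complex by the canonical PBW filtration
$$
F_p\bigl(V \ot U(\Fg)^{\ot q}\bigr) := V \ot \sum_{i_1 + \cdots + i_q \leq p} U_{i_1}(\Fg) \ot \cdots \ot U_{i_q}(\Fg),
$$
and the Lie algebra complex by its wedge-length filtration. Both $b$ and $B$ preserve $F_\bullet$ (the former because $\Delta$ is filtration-preserving on the cocommutative Hopf algebra $U(\Fg)$, the latter once the cocyclic operator is unpacked and combined with the local conilpotency of the $\Fg$-coaction). The associated graded on the Hopf side identifies, via PBW, with the symmetric Koszul complex $V \ot S(\Fg)^{\ot q}$, and $\mathrm{gr}(\alpha)$ becomes the classical embedding of antisymmetric tensors into symmetric ones, which is a Koszul-type quasi-isomorphism onto its image. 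Consequently $\alpha$ induces isomorphisms on $E^1$ of the two spectral sequences and hence on their abutments.

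The main obstacle will be the convergence of the spectral sequence at the periodic level, since periodicization replaces a direct sum of columns by a direct product and a priori destroys boundedness. I would therefore first establish the isomorphism at the level of the non-periodic Hopf cyclic bicomplex, where each cohomological diagonal involves only finitely many columns and classical convergence theorems apply, yielding $HC^\bullet(\Fg, V) \cong HC^\bullet(U(\Fg), V)$; the periodic statement then follows from naturality of $\alpha$ with respect to the $SBI$ long exact sequence and the five lemma. A secondary subtlety is that the coaction $v\snsb{-1} \ot v\snsb{0}$ can jump PBW-degree as $v$ varies, so the filtration is only \emph{pointwise} bounded through the local conilpotency of Definition \ref{def locally nilpotent comodule}; tracking this carefully, together with the twist by the modular character $\delta$ implicit in comparing stability over $\Fg$ versus over $U(\Fg)$ (Proposition \ref{12}), is where the hypotheses on $V$ will be used most essentially.
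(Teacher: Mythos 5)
Your strategy diverges from the paper's, and it breaks down at its central technical step. The filtration you put on the Hopf side, $F_p\bigl(V \ot U(\Fg)^{\ot q}\bigr) = V \ot \sum_{i_1+\cdots+i_q\le p} U_{i_1}(\Fg)\ot\cdots\ot U_{i_q}(\Fg)$, is \emph{not} preserved by $b$ or $B$ when the coaction on $V$ is nontrivial: the last coface $\p_{q+1}(v\ot u^1\odots u^q)=v\snsb{0}\ot u^1\odots u^q\ot v\snsb{-1}$ and the cyclic operator $\tau$ insert the coaction leg $v\snsb{-1}$, whose PBW degree is unbounded as $v$ ranges over $V$ (local conilpotency only says it is finite for each fixed $v$), while your filtration places no constraint on the $V$-factor at all. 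So the spectral-sequence comparison you build on this filtration does not get off the ground, and the claim ``both $b$ and $B$ preserve $F_\bullet$'' is false precisely because of the coaction, which is the whole difficulty of the theorem. Relatedly, your verification that $\alpha$ is a map of mixed complexes is not actually carried out: the contributions of the higher coaction components $\theta_k^{-1}(v\nsb{-k}\cdots v\nsb{-1})$, $k\ge 2$, are dismissed as ``$b$-boundaries of lower simplicial degree,'' which is not a cancellation and would in any case require an explicit homotopy that you never construct; the asserted identity $B\circ\alpha=\alpha\circ\p_{\rm CE}$ is likewise exactly where the nontrivial coaction enters $\tau$ and the extra degeneracy, and it is not checked. (The appeal to Proposition \ref{12} and the $\delta$-twist is also beside the point here.)

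The paper's proof filters the \emph{coefficients} rather than the algebra: using the canonical SAYD filtration $F_pV$ of \cite[Lemma 6.2]{JaraStef} (with $F_0V=V^{coU(\Fg)}$, $F_{p+1}V/F_pV=(V/F_pV)^{coU(\Fg)}$), one checks that $\p_{\rm CE},\p_{\rm K},b,B$ all preserve the induced filtration on both total complexes --- this works because the coaction \emph{drops} the filtration degree in $V$ by one, which is exactly what your PBW filtration cannot see. On the associated graded, $\p_{\rm K}$ and the coaction terms in $b$, $B$ vanish, so each $E_1$-term is the complex with coefficients in $(V/F_jV)^{coU(\Fg)}$ carrying the \emph{trivial} coaction, and there the classical antisymmetrization map is a quasi-isomorphism by \cite[Proposition 7]{ConnMosc98}. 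Comparing $E_1$-pages then gives the theorem. If you want to salvage your line of argument, you must replace your filtration by this coinvariants filtration on $V$ (or otherwise incorporate the coaction degree into the filtration), and either prove the chain-map/homotopy statements for $\alpha$ in full or, as the paper does, only use $\alpha$ on the associated graded where the coefficients have trivial coaction.
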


\begin{proof}
The total coboundary of $C(\Fg, V)$ is $\p_{\rm CE} + \p_{\rm K}$ while the total coboundary of the complex $C(U(\Fg), V)$ computing the  cyclic cohomology of $U(\Fg)$ is $B + b$.

Next, we compare the $E_1$ terms of the spectral sequences of the total complexes corresponding to the filtration on the complexes which is induced by the filtration on $V$ via \cite[Lemma 6.2]{JaraStef}. To this end, we first show that the coboundaries respect this filtration.

As it is indicated in the proof of \cite[Lemma 6.2]{JaraStef}, each $F_pV$ is a submodule of $V$. Thus, the Lie algebra homology boundary $\p_{\rm CE}$ respects the filtration. As for $\p_{\rm K}$, we notice for $v \in F_pV$
\begin{align}
\p_{\rm K}(X_1 \wdots X_n \ot v) = v\nsb{-1} \wg X_1 \wdots X_n \ot v\nsb{0}
\end{align}
Since
\begin{align}
\Db(v) = v\snsb{-1} \ot v\snsb{0} = 1 \ot v + v\nsb{-1} \ot v\nsb{0} + \sum_{k \geq 2}\theta_k^{-1}(v\nsb{-k} \cdots v\nsb{-1}) \ot v\nsb{0}
\end{align}
we observe  that $v\nsb{-1} \wg X_1 \wdots X_n \ot v\nsb{0} \in \wedge^{n+1}\Fg \ot F_{p-1}V$.  Since  $F_{p-1}V \subseteq F_pV$, we conclude that  $\p_{\rm K}$ respects the filtration.

Since the Hochschild coboundary $b: V\ot U(\Fg)^{\ot n}\ra  V\ot U(\Fg)^{\ot n+1}$ is the alternating sum of cofaces $\d_i$, it suffices to check each $\d_i$ preserve the filtration, which  is obvious for all cofaces  except possibly   the last one. However,  for the last coface, we take  $v \in F_pV$ and  write
\begin{align}
v\snsb{-1} \ot v\snsb{0} = 1 \ot v + v\ns{-1} \ot v\ns{0}, \qquad v\ns{-1} \ot v\ns{0} \in \Fg \ot F_{p-1}V.
\end{align}
We have
\begin{align}
\delta_n(v \ot u^1 \ot \cdots \ot u^n) = v\snsb{0} \ot u^1 \ot \cdots \ot u^n \ot v\snsb{-1} \in F_pV \ot U(\Fg)^{\ot\, n+1}.
\end{align}
Hence, we can say that $b$ respects the filtration.

For the cyclic operator, the result again follows from the fact that $F_p$ is a $\Fg$-module. Indeed, for $v \in F_pV$
\begin{align}
\tau_n(v \ot u^1 \ot \cdots \ot u^n) = v\snsb{0} \cdot u^1\ps{1} \ot S(u^1\ps{2}) \cdot (u^2 \ot \cdots \ot u^n \ot v\snsb{-1}) \in F_pV \ot U(\Fg)^{\ot\, n}
\end{align}

Finally we consider the extra degeneracy operator
\begin{align}
\sigma_{-1}(v \ot u^1 \ot \cdots \ot u^n) = v \cdot u^1\ps{1} \ot S(u^1\ps{2}) \cdot (u^2 \ot \cdots \ot u^n) \in F_pV \ot U(\Fg)^{\ot\, n}
\end{align}
which  preserves the filtration again by using the fact that $F_p$ is $\Fg$-module and the coaction preserve the filtration. As a result now, we can say that the Connes' boundary $B$ respects the filtration.

Now, the $E_1$-term of the spectral sequence associated to the filtration $(F_pV)_{p \geq 0}$ computing the periodic cyclic cohomology of the Lie algebra $\Fg$ is known to be of the form
\begin{align}
E_1^{j,\,i}(\Fg) = H^{i+j}(F_{j+1}C(\Fg, V)/F_jC(\Fg, V), [\p_{\rm CE} + \p_{\rm K}])
\end{align}
where, $[\p_{\rm CE} + \p_{\rm K}]$ is the induced coboundary operator on the quotient complex. By the obvious identification
\begin{align}
F_{j+1}C(\Fg,V)/F_jC(\Fg,V) \cong C(\Fg,F_{j+1}V / F_jV) = C(\Fg,(V / F_jV)^{co\Fg}),
\end{align}
we observe  that
\begin{align}
E_1^{j,\,i}(\Fg) = H^{i+j}(C(\Fg,(V / F_jV)^{coU(\Fg)}), [\p_{\rm CE}]),
\end{align}
for  $\p_{\rm K}(F_{j+1}C(\Fg,V)) \subseteq F_jC(\Fg,V)$.

 Similarly,
\begin{align}
E_1^{j,\,i}(U(\Fg)) = H^{i+j}(C(U(\Fg),(V / F_jV)^{coU(\Fg)}), [b + B]).
\end{align}
Finally,  considering
\begin{align}
E_1^{j,\,i}(\Fg) = H^{i+j}(C(\Fg,(V / F_jV)^{co\Fg}), [0] + [\p_{\rm CE}])
\end{align}
\ie as a bicomplex with degree +1 differential is zero, the anti-symmetrization map $\alpha:C(\Fg,(V / F_jV)^{co\Fg}) \to C(U(\Fg),(V / F_jV)^{coU(\Fg)})$ induces a quasi-isomorphism $[\alpha]:E_1^{j,\,i}(\Fg) \to E_1^{j,\,i}(U(\Fg))$, $\forall i,j$ by Proposition 7 in \cite{ConnMosc98}.
\end{proof}

\begin{remark}
In case the $\Fg$-module $V$ has a trivial $\Fg$-comodule structure, the coboundary $\p_{\rm K} = 0$ and
\begin{align}
HP^{\bullet}(\Fg, V) = \bigoplus_{n = \bullet \, mod \, 2} H_n(\Fg, V).
\end{align}
In this case, the above theorem becomes \cite[Proposition 7]{ConnMosc98}.
\end{remark}

\bibliographystyle{amsplain}
\bibliography{Rangipour-Sutlu-References}{}

\end{document}